\documentclass[aos]{imsart}

\usepackage[authoryear]{natbib}
\bibliographystyle{imsart-nameyear}

\usepackage{enumitem}
\usepackage{graphicx}
\usepackage{subcaption}
\usepackage{verbatim}
\usepackage{xr}
\RequirePackage[colorlinks,citecolor=blue,urlcolor=blue,breaklinks=true]{hyperref}
\usepackage{breakcites}

\startlocaldefs
\usepackage{pig}
\newcommand{\mitn}{\footnotemark[6]}
\newcommand{\nyun}{\footnotemark[7]}
\endlocaldefs

\begin{document}
\begin{frontmatter}

\title{Optimality and Sub-optimality of PCA I: Spiked Random Matrix Models}
\runtitle{Optimality and Sub-optimality of PCA}

\author{\fnms{Amelia} \snm{Perry}\corref{}\ead[label=e1]{}\thanksref{fa}\thanksref{t1}\protect\mitn}
\and
\author{\fnms{Alexander S.} \snm{Wein}\corref{}\ead[label=e2]{awein@mit.edu}\thanksref{fa}\thanksref{t2}\protect\mitn}
\and
\author{\fnms{Afonso S.} \snm{Bandeira}\ead[label=e3]{bandeira@cims.nyu.edu}\thanksref{t3}\protect\nyun}
\and
\author{\fnms{Ankur} \snm{Moitra}\ead[label=e4]{moitra@mit.edu}\thanksref{t4}\protect\mitn}
\runauthor{A.\ Perry, A.\ S.\ Wein, A.\ S.\ Bandeira \and A.\ Moitra}

\affiliation{Massachusetts Institute of Technology\protect\mitn}
\affiliation{Courant Institute of Mathematical Sciences, New York University\protect\nyun}

\thankstext{fa}{The first two authors contributed equally.}
\thankstext{t1}{This work was supported in part by NSF CAREER Award CCF-1453261 and a grant from the MIT NEC Corporation.}
\thankstext{t2}{This research was conducted with Government support under and awarded by DoD, Air Force Office of Scientific Research, National Defense Science and Engineering Graduate (NDSEG) Fellowship, 32 CFR 168a.}
\thankstext{t3}{A.S.B.\ was supported by NSF Grants DMS-1317308, DMS-1712730, and DMS-1719545.
 Part of this work was done while with the MIT Department of Mathematics.}
\thankstext{t4}{This work was supported in part by NSF CAREER Award CCF-1453261,
NSF Large CCF-1565235, a David and Lucile Packard Fellowship and an Alfred P. Sloan Fellowship.}

\address{Amelia Perry \\ Department of Mathematics \\ Massachusetts Institute of Technology \\ 77 Massachusetts Avenue \\ Cambridge, MA 02139, USA}
\address{Alexander S.\ Wein \\ Department of Mathematics \\ Massachusetts Institute of Technology \\ 77 Massachusetts Avenue \\ Cambridge, MA 02139, USA \\ \printead{e2}}
\address{Afonso S.\ Bandeira \\ Department of Mathematics, and \\ Center for Data Science \\ Courant Institute of \\ \quad Mathematical Sciences \\ New York University \\ 251 Mercer Street \\ New York, NY 10012, USA \\ \printead{e3}}
\address{Ankur Moitra \\ Department of Mathematics, and \\ Computer Science and Artificial \\ \quad Intelligence Laboratory \\ Massachusetts Institute of Technology \\ 77 Massachusetts Avenue \\ Cambridge, MA 02139, USA \\ \printead{e4}}

\begin{abstract}

A central problem of random matrix theory is to understand the eigenvalues of spiked random matrix models,
introduced by Johnstone,
in which a prominent eigenvector (or ``spike'') is planted into a random matrix. These distributions form natural statistical models for principal component analysis (PCA) problems throughout the sciences. Baik, Ben Arous and P\'ech\'e showed that the spiked Wishart ensemble exhibits a sharp phase transition asymptotically: when the spike strength is above a critical threshold, it is possible to detect the presence of a spike based on the top eigenvalue, and below the threshold the top eigenvalue provides no information. Such results form the basis of our understanding of when PCA can detect a low-rank signal in the presence of noise.
However, under structural assumptions on the spike, not all information is necessarily contained in the spectrum.
We study the statistical limits of tests for the presence of a spike, including non-spectral tests. Our results leverage Le Cam's notion of contiguity, and include:

\noindent i) For the Gaussian Wigner ensemble, we show that PCA achieves the optimal detection threshold for certain natural priors for the spike.

\noindent ii) For any non-Gaussian Wigner ensemble, PCA is sub-optimal for detection. However, an efficient variant of PCA achieves the optimal threshold (for natural priors) by pre-transforming the matrix entries.

\noindent iii) For the Gaussian Wishart ensemble, the PCA threshold is optimal for positive spikes (for natural priors) but this is not always the case for negative spikes.

\noindent

\end{abstract}

\begin{keyword}[class=MSC]
\kwd{62H15}
\kwd{62B15}
\end{keyword}

\begin{keyword}
\kwd{random matrix}
\kwd{principal component analysis}
\kwd{hypothesis testing}
\kwd{deformed Wigner}
\kwd{spiked covariance}
\kwd{contiguity}
\kwd{power envelope}
\kwd{phase transition}
\end{keyword}

\end{frontmatter}

\section{Introduction}
\label{sec:intro}

\nocite{bmnn} 

One of the most common ways to analyze a collection of data is to extract top eigenvectors of a sample covariance matrix that represent directions of largest variance, often referred to as principal component analysis (PCA). Starting from the work of Karl Pearson, this technique has been a mainstay in statistics and throughout the sciences for more than a century. For instance, genome-wide association studies construct a correlation matrix of expression levels, whereby PCA is able to identify collections of genes that work together. PCA is also used in economics to extract macroeconomic trends and to predict yields and volatility \citep{macro1,forni,stock-watson,macro2}, and in network science to find well-connected communities \citep{mcsherry}. More broadly, it underlies much of exploratory data analysis, dimensionality reduction, and visualization.

Classical random matrix theory provides a suite of tools to characterize the behavior of the eigenvalues of various random matrix models in high-dimensional settings. Nevertheless, most of these works can be thought of as focusing on a pure noise model \citep{AGZ-book,BS-book,Tao-book} where there is not necessarily any low-rank structure to extract. A direction initiated by \citet{J-spk} has brought this powerful theory closer to statistical questions by introducing \emph{spiked models} that are of the form ``signal + noise.'' Such models have yielded fundamental new insights on the behaviors of several methods such as principal component analysis (PCA) \citep{JL-sparse-pca,Paul,Nadler}, sparse PCA \citep{AW-sparse-pca,VL-sparse,BR-opt,Ma-sparse,SSM-sparse,CMW-sparse,Birnbaum-sparse,DM-sparse-pca,KNV-sparse-pca}, and synchronization algorithms
\citep{sin11,boumal2014cramer,angular-tightness,boumal}. More precisely, given a true signal in the form of an $n$-dimensional unit vector $x$ called the \emph{spike}, we can define two natural spiked random matrix ensembles as follows:
\begin{itemize}[leftmargin=*]
\item Spiked (Gaussian) Wishart: observe the sample covariance $Y = \frac{1}{N} X X^\top$, where $X$ is an $n \times N$ matrix with columns drawn \iid from $\cN(0,I_n + \beta x x^\top)$, in the high-dimensional setting where the sample count $N$ and dimension $n$ scale proportionally as $n/N \to \gamma$. We allow $\beta \in [-1,\infty)$.
\item Spiked Wigner: observe $Y = \lambda x x^\top + \frac{1}{\sqrt{n}} W$, where $W$ is an $n \times n$ random symmetric matrix with entries drawn \iid (up to symmetry) from a fixed distribution of mean $0$ and variance $1$.
\end{itemize}
\noindent We adopt a Bayesian viewpoint, taking the spike $x$ to be drawn from an arbitrary but known prior. This enables our approach to address structural assumptions on the spike, such as sparsity or an entrywise constraint to $\{\pm 1/\sqrt{n}\}$, to model variants of sparse PCA or community detection \citep{dam}.

The Wishart model describes the sample covariance of high-dimensional data. The Gaussian Wigner distribution arises from the Wishart as a particular small-$\gamma$ limit \citep{jo-testing-spiked}. The spiked Wigner model also describes various inference problems where pairwise measurements are observed between $n$ entities; this captures, for instance, Gaussian variants of community detection \citep{dam} and $\mathbb{Z}/2$ synchronization \citep{sdp-phase}.

We will refer to the parameter $\beta$ or $\lambda$ as the signal-to-noise ratio (SNR).

In each of the above models, we study the following statistical questions:

\begin{itemize}[leftmargin=*]
\item \emph{Detection}: For what values of the SNR is it possible to consistently test (with probability $1-o(1)$ as $n \to \infty$) between a random matrix drawn from the spiked distribution and one from the unspiked distribution?

\item \emph{Recovery}: For what values of the SNR can an estimator $\hat x$ achieve correlation with the true spike $x$ that is bounded above zero as $n \to \infty$?
\end{itemize}

\noindent We primarily study the detection problem, which has previously been explored in various statistical models \citep{Donoho-Jin,CJL,SN-size,Ingster-detection,ACCD,ACCP,aCBL,BI-detect,SN-max,av-dense,av-sparse}.

The spiked random matrix models above all enjoy a sharp characterization of the performance of PCA through random matrix theory.
In the complex Wishart case, the seminal work of \citet*{bbp} showed that when $\beta > \sqrt{\gamma}$ an isolated eigenvalue emerges from the Marchenko--Pastur-distributed bulk. Later \citet*{baik-silverstein} established this result in the real Wishart case. In the Wigner case, the top eigenvalue separates from the semicircular bulk when $\lambda > 1$ \citep{peche,fp,cdf,wig-spk}. Each result establishes a sharp {\em spectral threshold} at which PCA (top eigenvalue) is able to solve the detection problem for the respective spiked random matrix model. Moreover, it is known that above this threshold, the top eigenvector correlates nontrivially with $x$, while the correlation concentrates about zero below the threshold. Despite detailed research on the spectral properties of spiked random matrix models, much less is known about the more general statistical question: can any hypothesis test consistently detect the presence of a spike below the threshold where PCA succeeds? Our main goal in this paper is to address this question in each of the models above, and as we will see, the answer varies considerably across them. Our results shed new light on how much of the accessible information about $x$ is {\em not} captured by the top eigenvalue, or even by the full spectrum.

Several recent works have examined this question. \citet{sphericity} study the spiked Wishart model where $x$ is an arbitrary unknown unit vector (which, by rotational symmetry, is equivalent to drawing $x$ from the uniform prior on the unit sphere). They identify the optimal hypothesis testing power (between spiked and unspiked) and in particular show that there is no test to consistently detect the presence of a spike below the spectral threshold. Even more recent work \citep{onatski-multispike,all-eigs,rare-weak} elaborates on this point in other spiked models. In the Gaussian Wigner model it has been established by \citet{mrz-sphere} and \citet{jo-testing-spiked} that detection is impossible below the spectral threshold, and the former used techniques similar to those of the present paper, which are not fundamentally limited to spherically symmetric models; indeed, these techniques were applied to sparse PCA by \citet{bmvvx}.

In another line of work, several papers have studied recovery in structured spiked random matrix models through approximate message passing \citep{amp-cs,bm,jm}, Guerra interpolation \citep{guerra}, and other tools originating from statistical physics. These results span sparse PCA \citep{sparse-pca-amp,phase-sparse-pca}, non-negative PCA \citep{nonneg-pca}, cone-constrained PCA \citep{cone-constrained}, and general structured PCA \citep{RF-amp,LKZ-amp,dam,mi,mi-proof,lelarge-limits-lowrank}. Methods based on approximate message passing typically exhibit the same threshold as PCA, but above the threshold they obtain better (and often optimal) estimates of the spike. In many cases, the above techniques give the asymptotic minimum mean square error (MMSE) and, in particular, identify the threshold for nontrivial recovery. However, they do not typically address the detection problem (although we expect the detection and recovery thresholds to match), and they tend to be restricted to \iid priors.

We develop a number of general-purpose tools for proving both upper and lower bounds on detection. We defer the precise statement of our results in each model to their respective sections, but for now we highlight some of our main results:

\begin{itemize}[leftmargin=*]

\item In the Gaussian Wigner model, detection is impossible below the spectral threshold ($\lambda=1$) for priors such as the spherical prior\footnote{$x$ is uniform on the unit sphere in $\mathbb{R}^n$} (Corollary~\ref{cor:sphere-prior}), the Rademacher prior\footnote{$x$ is \iid uniform on $\{\pm 1/\sqrt{n}\}$} (Corollary~\ref{cor:pmone-wigner}), and any sufficiently subgaussian prior (Theorem~\ref{thm:subg-iid}). We also study sparse Rademacher priors\footnote{$x$ is \iid where each entry is 0 with probability $1-\rho$ and otherwise uniform on $\{\pm \frac{1}{\sqrt{\rho n}}\}$}, where we see that the spectral threshold is sometimes optimal and sometimes sub-optimal depending on the sparsity level (Section~\ref{sec:sparse-rad}).

\item In the Wigner model with non-Gaussian noise, the spectral threshold is never optimal (subject to mild conditions): there is an entrywise pre-transformation on the observed matrix that exploits the non-Gaussianity of the noise and strictly improves the performance of PCA (Theorem~\ref{thm:nong-upper}). This method was first described by \citet{LKZ-amp} and we give a rigorous analysis. Moreover we provide a lower bound (Theorem~\ref{thm:nongauss-lower}) which often matches this upper bound.

\item In the Wishart model, the PCA threshold is optimal for the spherical prior, both for positive and negative $\beta$. For the Rademacher prior, PCA is optimal for all positive $\beta$; however, in the less-studied case of negative $\beta$, an inefficient algorithm succeeds below the spectral threshold when $\gamma$ is sufficiently large. This exposes a new statistical phase transition that seems to be previously unexplored. For the sparse Rademacher prior, PCA can be sub-optimal in both the positive and negative $\beta$ regimes, but it is always optimal for sufficiently large positive $\beta$.

\end{itemize}

We emphasize that when we say PCA is optimal, we refer only to the threshold for consistent detection. In essentially all cases we consider (except the spherical prior), the top eigenvector has sub-optimal estimation error above the threshold; optimal error is often given by an approximate message passing algorithm such as that of \citet{dam}. Furthermore, PCA does not achieve optimal hypothesis testing power below the threshold, and in fact no method based on a finite number of top eigenvalues can be optimal in this sense \citep{sphericity,onatski-multispike,jo-testing-spiked,all-eigs}.

All our lower bounds follow a similar pattern and are based on the notion of \emph{contiguity} introduced by \citet{lecam}. On a technical level, we show that a particular second moment is bounded which (as is standard in contiguity arguments) implies that the spiked distribution cannot be consistently distinguished (with $o(1)$ error as $n \to \infty$) from the corresponding unspiked distribution. We develop general tools for controlling the second moment based on subgaussianity and large deviations theory that apply across a range of models and a range of different priors on $x$. 

While bounds on the second moment do not \emph{a priori} imply anything about the recovery problem, it follows from results of \citet{bmvvx} that many of our non-detection results imply the corresponding non-recovery results. The value of the second moment also yields bounds on hypothesis testing power (see Proposition~\ref{prop:hyptest}).

\footnotetext{Recall that Type I error refers to the probability of reporting a spike when none exists (false positives), while Type II error is the probability of reporting no spike when one does exist (false negatives).}

Our work fits into an emerging theme in statistics: we indicate several scenarios when PCA is sub-optimal but the only known tests that beat it are computationally inefficient. Such computational vs.\ statistical gaps have received considerable recent attention (e.g.\ \citet{Ber-Rig,Ma-Wu}), often in connection with sparsity. We provide evidence for a new such gap in the negatively-spiked Wishart model with the Rademacher prior, offering an example where sparsity is not present.

\subsection*{Outline}
In Section~\ref{sec:contig} we give preliminaries on contiguity and the second moment method. In Section~\ref{sec:gwig} we study the spiked Gaussian Wigner model, in Section~\ref{sec:nong-wig} we study the spiked non-Gaussian Wigner model, and in Section~\ref{sec:wish} we study the spiked Wishart model. Some proofs are deferred to appendices in \ref{supp} (included in this document).

\section{Contiguity and the second moment method}
\label{sec:contig}

Contiguity and related ideas will play a crucial role in this paper. First introduced by \citet{lecam}, contiguity is a central concept in the asymptotic theory of statistical experiments, and has found many applications throughout probability and statistics. Our work builds on a history of using contiguity and related tools such as the \emph{small subgraph conditioning method} to establish fundamental results about random graphs (e.g.\ \citet{rw-ham,janson,1-fact}; see \citet{wor-survey} for a survey) and impossibility results for detecting community structure in the sparse stochastic block model \citep{mns, bmnn}. Contiguity is formally defined as follows:

\begin{definition}[\citet{lecam}]
Let distributions $P_n$, $Q_n$ be defined on the measurable space $(\Omega_n,\F_n)$. We say that the sequence $Q_n$ is \defn{contiguous} to $P_n$, and write $Q_n \contig P_n$, if for any sequence $A_n$ of events,
$$\lim_{n \to \infty} P_n(A_n) = 0 \;\implies\; \lim_{n \to \infty} Q_n(A_n) = 0.$$
\end{definition}

\noindent Contiguity readily implies that the distributions $P_n$ and $Q_n$ cannot be consistently distinguished (given a single sample) in the following sense:
\begin{observation}
\label{obs:contig}
If $Q_n \contig P_n$ then there is no hypothesis test of the alternative $Q_n$ against the null $P_n$
with $\prob{\text{type I error}} + \prob{\text{type II error}} = o(1)$.
\end{observation}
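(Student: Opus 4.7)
The plan is to argue by contradiction, translating a hypothetical consistent test directly into a sequence of events that violates the defining property of contiguity. Concretely, suppose such a test existed; identify it with its rejection region, namely a sequence of measurable sets $A_n \in \F_n$, where we reject the null upon observing an element of $A_n$. Then the two error probabilities are $P_n(A_n)$ (type I) and $Q_n(\Omega_n \setminus A_n) = 1 - Q_n(A_n)$ (type II).

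The first step is to unpack the hypothesis $\prob{\text{type I}} + \prob{\text{type II}} = o(1)$ into two simultaneous conclusions: $P_n(A_n) \to 0$ and $Q_n(A_n) \to 1$. The second step is to feed the sequence $A_n$ into the contiguity assumption $Q_n \contig P_n$: since $P_n(A_n) \to 0$, the definition forces $Q_n(A_n) \to 0$. This directly contradicts $Q_n(A_n) \to 1$, completing the proof.

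There is essentially no technical obstacle here; the statement is an immediate unpacking of the definition, and the only thing to be careful about is choosing the correct orientation (taking $A_n$ to be the rejection region rather than its complement) so that the conclusion $P_n(A_n) \to 0$ is the one whose image under contiguity we need. I would present the argument in a few lines without further machinery.
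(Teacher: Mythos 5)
Your argument is correct and is exactly the standard unpacking of the definition; the paper itself leaves this observation without an explicit proof (it says contiguity ``readily implies'' it), and your contradiction via the rejection region $A_n$ is the intended and essentially unique route.
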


\noindent Note that $Q_n \contig P_n$ and $P_n \contig Q_n$ are not equivalent, but either of them implies non-distinguishability. Also, showing that two (sequences of) distributions are contiguous does not rule out the existence of a test that distinguishes between them with constant error probability (better than random guessing). In fact, such tests do exist for the spiked Wigner and Wishart models, for instance by thresholding the trace of the matrix; optimal tests are discussed by \citet{sphericity} and \citet{jo-testing-spiked}.

Our goal in this paper is to show thresholds below which spiked and unspiked random matrix models are contiguous. We will do this through computing a particular second moment, related to the $\chi^2$-divergence as $1+\chi^2(Q_n || P_n)$, through a classical form of the second moment method:
\begin{lemma}
\label{lem:sec}
Let $\{P_n\}$ and $\{Q_n\}$ be two sequences of distributions on $(\Omega_n,\F_n)$. If the second moment
$$\Ex_{P_n} \left[\left( \dd[Q_n]{P_n} \right)^2\right]$$
exists and remains bounded as $n \to \infty$, then $Q_n \contig P_n$.
\end{lemma}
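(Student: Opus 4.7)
The plan is to apply the Cauchy--Schwarz inequality directly to the likelihood ratio $L_n := dQ_n/dP_n$. The hypothesis that the expectation $\Ex_{P_n}[L_n^2]$ is finite tacitly assumes $Q_n \ll P_n$, so that $L_n$ is a genuine Radon--Nikodym derivative; I would flag this at the outset and observe that in the models treated in the paper (spiked vs.\ unspiked Wigner and Wishart ensembles) the densities are everywhere positive, so absolute continuity is automatic.

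Given an arbitrary sequence of events $A_n \in \mathcal{F}_n$ with $P_n(A_n) \to 0$, I would first rewrite the $Q_n$-probability by the change-of-measure identity,
\[
Q_n(A_n) \;=\; \int_{A_n} dQ_n \;=\; \Ex_{P_n}\!\bigl[L_n \, \mathbf{1}_{A_n}\bigr],
\]
and then apply Cauchy--Schwarz in $L^2(P_n)$ to obtain
\[
Q_n(A_n) \;\leq\; \sqrt{\Ex_{P_n}[L_n^2]} \cdot \sqrt{\Ex_{P_n}[\mathbf{1}_{A_n}^2]} \;=\; \sqrt{\Ex_{P_n}[L_n^2]} \cdot \sqrt{P_n(A_n)}.
\]
By the boundedness hypothesis there is a constant $C$ such that $\Ex_{P_n}[L_n^2] \leq C$ for all $n$, and by assumption on $A_n$ the second factor tends to zero. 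Hence $Q_n(A_n) \to 0$, which is precisely the definition of $Q_n \contig P_n$.

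There is no substantive obstacle here: the whole proof reduces to one line of Cauchy--Schwarz. The only subtlety worth calling out is the absolute-continuity caveat above. If one were to allow a nontrivial singular part $Q_n^s \perp P_n$ in the Lebesgue decomposition, the lemma as written could fail by taking $A_n$ to be the $P_n$-null set supporting $Q_n^s$; so the statement is implicitly about the case $Q_n \ll P_n$, which is the only setting relevant for the paper.
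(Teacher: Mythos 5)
Your proof is correct and is essentially identical to the paper's: both express $Q_n(A_n)$ as an integral against $P_n$ and apply Cauchy--Schwarz to split off $\sqrt{P_n(A_n)}$. Your added remark on absolute continuity is accurate and a reasonable thing to flag, but it does not change the argument.
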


All of the contiguity results in this paper will follow through Lemma~\ref{lem:sec} and its conditional variant below. The roles of $P_n$ and $Q_n$ are not symmetric, and we will always take $P_n$ to be the unspiked distribution and take $Q_n$ to be the spiked distribution, as the second moment is more tractable to compute in this direction. We include the proof of Lemma~\ref{lem:sec} here for completeness:

\begin{proof}
Let $\{A_n\}$ be a sequence of events. Using Cauchy--Schwarz,
\begin{align*}
Q_n(A_n) 
= \int_{A_n} \dd[Q_n]{P_n} \,\dee P_n \le \sqrt{\int_{A_n} \left( \dd[Q_n]{P_n} \right)^2 \,\dee P_n} \;\cdot\; \sqrt{\int_{A_n} \,\dee P_n}. 
\end{align*}
The first factor on the right-hand side is bounded; so if $P_n(A_n) \to 0$ then also $Q_n(A_n) \to 0$.
\end{proof}

There will be times when the above second moment is unbounded but we are still able to prove contiguity using a modified second moment that conditions away from rare `bad' events that would otherwise dominate the second moment. This idea has appeared previously \citep{av-dense,av-sparse,bmnn,bmvvx}.
\begin{lemma}
\label{lem:cond}
Let $\omega_n$ be an event that occurs with probability $1-o(1)$ under $Q_n$.
Let $\tilde Q_n$ be the conditional distribution of $Q_n$ given $\omega_n$. If the modified second moment $\EE_{P_n}\left[(\ddflat[\tilde Q_n]{P_n} )^2 \right]$ remains bounded as $n \to \infty$, then $Q_n \contig P_n$.
\end{lemma}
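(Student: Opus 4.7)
The plan is to reduce Lemma~\ref{lem:cond} to the unconditional version Lemma~\ref{lem:sec} by passing through $\tilde Q_n$ as an intermediate distribution. The bounded modified second moment hypothesis is exactly the hypothesis of Lemma~\ref{lem:sec} applied to the pair $(P_n, \tilde Q_n)$, so I immediately get $\tilde Q_n \contig P_n$ for free. The remaining work is to transfer contiguity from $\tilde Q_n$ to $Q_n$, using the fact that $\tilde Q_n$ and $Q_n$ agree up to a conditioning event of asymptotically negligible $Q_n$-mass.

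For the transfer step, I would fix an arbitrary sequence of events $A_n \in \F_n$ with $P_n(A_n) \to 0$ and bound $Q_n(A_n)$ by splitting on whether $\omega_n$ occurs:
\begin{equation*}
Q_n(A_n) \;=\; Q_n(A_n \cap \omega_n) + Q_n(A_n \cap \omega_n^c) \;\le\; Q_n(\omega_n)\,\tilde Q_n(A_n) + Q_n(\omega_n^c),
\end{equation*}
where the inequality uses the defining relation $\tilde Q_n(A_n) = Q_n(A_n \cap \omega_n)/Q_n(\omega_n)$ of the conditional distribution. The first term tends to zero because $\tilde Q_n \contig P_n$ forces $\tilde Q_n(A_n) \to 0$, and the second term tends to zero by the hypothesis that $Q_n(\omega_n) = 1 - o(1)$. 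This shows $Q_n(A_n) \to 0$, which is exactly the definition of $Q_n \contig P_n$.

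In effect, the lemma follows from two basic facts packaged together: (i) Lemma~\ref{lem:sec} applied to the truncated distribution, and (ii) contiguity is preserved under modifications of a sequence of distributions on events of vanishing probability. There is no substantial obstacle here; the only thing worth checking carefully is that the bound $Q_n(A_n \cap \omega_n) \le \tilde Q_n(A_n)$ holds, which is immediate since $Q_n(\omega_n) \le 1$. The proof is essentially a two-line argument once the conditional distribution is set up correctly.
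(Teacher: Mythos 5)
Your proof is correct and is essentially the paper's approach: the paper invokes $\tilde Q_n \contig P_n$ via Lemma~\ref{lem:sec} and then chains with the (stated without proof) fact $Q_n \contig \tilde Q_n$; your decomposition $Q_n(A_n) \le Q_n(\omega_n)\tilde Q_n(A_n) + Q_n(\omega_n^c)$ is exactly the standard verification of that second contiguity, which the paper leaves implicit.
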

\begin{proof}
By Lemma~\ref{lem:sec} we have $\tilde Q_n \contig P_n$. As $Q_n \contig \tilde Q_n$ we have $Q_n \contig P_n$.
\end{proof}

Moreover, given a value of the second moment, we are able to obtain bounds on the tradeoff between type I and type II error in hypothesis testing, which are valid non-asymptotically:
\begin{proposition}\label{prop:hyptest}
Consider a hypothesis test of a simple alternative $Q$ against a simple null $P$. Let $\alpha$ be the probability of type I error, and $\beta$ the probability of type II error. Regardless of the test, we must have
$$\frac{(1-\beta)^2}{\alpha} + \frac{\beta^2}{(1-\alpha)} \le \Ex_P \left(\dd[Q]{P}\right)^2,$$
assuming the right-hand side is defined and finite. Furthermore, this bound is tight: for any $\alpha,\beta \in (0,1)$ there exist $P,Q$, and a test for which equality holds.
\end{proposition}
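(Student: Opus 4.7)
Fix a test and let $A$ denote its rejection region (the analysis extends to randomized tests by replacing $\mathbf{1}_A$ with a $[0,1]$-valued $\phi$; I will describe the deterministic case first). Then $\alpha = P(A)$, $\beta = Q(A^c)$, so $1-\beta = Q(A)$ and $1-\alpha = P(A^c)$. Writing $L = \ddflat[Q]{P}$, I have $Q(A) = \int_A L \, dP$ and $Q(A^c) = \int_{A^c} L \, dP$. The strategy is simply to apply Cauchy--Schwarz separately on $A$ and on $A^c$ to lower bound the two pieces of $\EE_P[L^2]$.

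\textbf{Main step.} By the Cauchy--Schwarz inequality applied to $L$ and $1$ with respect to $P$ restricted to $A$,
\begin{equation*}
\int_A L^2 \, dP \;\cdot\; P(A) \;\ge\; \left( \int_A L \, dP \right)^{\!2} = Q(A)^2 = (1-\beta)^2,
\end{equation*}
so $\int_A L^2 \, dP \ge (1-\beta)^2/\alpha$. The identical argument on $A^c$ yields $\int_{A^c} L^2 \, dP \ge \beta^2/(1-\alpha)$. Summing these two inequalities and using $\EE_P[L^2] = \int_A L^2 \, dP + \int_{A^c} L^2 \, dP$ gives the claim. For a randomized test $\phi:\Omega \to [0,1]$, the same argument works verbatim with $\mathbf{1}_A$ replaced by $\phi$ and $\mathbf{1}_{A^c}$ replaced by $1-\phi$, using $\EE_P[L^2] \ge \EE_P[L^2 \phi] + \EE_P[L^2(1-\phi)]$.

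\textbf{Tightness.} Cauchy--Schwarz is tight on $A$ iff $L$ is $P$-a.s.\ constant on $A$, and similarly on $A^c$. So it suffices to exhibit a two-point example. Take $\Omega = \{0,1\}$, $P(\{1\}) = \alpha$, $Q(\{1\}) = 1-\beta$, and the test that rejects the null exactly on $\{1\}$. Then the test has type I error $\alpha$ and type II error $\beta$, while the likelihood ratio $L$ equals $(1-\beta)/\alpha$ on $\{1\}$ and $\beta/(1-\alpha)$ on $\{0\}$. A direct computation gives $\EE_P[L^2] = (1-\beta)^2/\alpha + \beta^2/(1-\alpha)$, matching the bound.

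\textbf{Difficulty assessment.} There is no serious obstacle: the inequality is essentially a single application of Cauchy--Schwarz split across the rejection region and its complement, and the tightness example is a standard two-point construction. The only mild care needed is to handle randomized tests (to rule out potentially better randomized procedures) and to note that the argument is valid whenever $Q \ll P$, so that $L = \ddflat[Q]{P}$ exists and the right-hand side is finite.
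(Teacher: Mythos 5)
Your proof is correct and is essentially the same argument as the paper's: both split $\EE_P[(dQ/dP)^2]$ over the rejection region $A$ and its complement, apply Cauchy--Schwarz on each piece (the paper phrases it as $\int_A (dQ/dP)\,dQ \ge (\int_A dQ)^2/\int_A dP$, which is the same inequality written with respect to $Q$), and exhibit the same two-point Bernoulli example for tightness. Your explicit remark about randomized tests is a nice small addition but does not change the substance.
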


\begin{proof}
Let $A$ be the event that the test selects the alternative $Q$, and let $\bar A$ be its complement.
\begin{align*}
\Ex_P\left(\dd[Q]{P}\right)^2 &= \int \dd[Q]{P} \,\dee Q = \int_A \dd[Q]{P} \,\dee Q + \int_{\bar A} \dd[Q]{P} \,\dee Q \\
&\ge \frac{\left(\int_A \dee Q\right)^2}{\int_A (\dee P/ \dee Q) \,\dee Q} + \frac{\left(\int_{\bar A} \dee Q\right)^2}{\int_{\bar A} (\dee P/\dee Q) \,\dee Q} = \frac{(1-\beta)^2}{\alpha} + \frac{\beta^2}{(1-\alpha)	}
\end{align*}
where the inequality follows from Cauchy--Schwarz. The following example shows tightness: let $P = \mathrm{Bernoulli}(\alpha)$ and let $Q = \mathrm{Bernoulli}(1-\beta)$. On input $0$, the test chooses $P$, and on input $1$, it chooses $Q$.
\end{proof}

Although contiguity is a statement about non-detection rather than non-recovery, our results also have implications for non-recovery. In general, the detection problem and recovery problem can have different thresholds, but such counterexamples are often unnatural. For a wide class of problems with additive Gaussian noise, the results of \citet{bmvvx} imply that if the second moment from above is bounded then nontrivial recovery is impossible. This result applies to the Gaussian Wigner model and the positively-spiked ($\beta > 0$) Wishart model\footnote{For the Wishart case, consider the asymmetric $n \times N$ matrix of samples, which can be equivalently written as $\sqrt{\beta} x u^\top + W$ where $u \sim \cN(0,I_N)$ and $W$ is \iid $\cN(0,1)$.}, and so our non-detection results immediately imply non-recovery results in those settings.

\section{Gaussian Wigner models}
\label{sec:gwig}

\subsection{Main results}\label{sec:gwig-intro}

We define the spiked Gaussian Wigner model:
\begin{definition}
\label{def:prior}
A \emph{spike prior} is a family of distributions $\cX = \{\cX_n\}$, where $\cX_n$ is a distribution over $\RR^n$. We require our priors to be normalized so that $x^{(n)}$ drawn from $\cX_n$ has $\|x^{(n)}\| \to 1$ (in probability) as $n \to \infty$.
\end{definition}

\begin{definition}
For $\lambda \ge 0$ and a spike prior $\cX$, we define the spiked Gaussian Wigner model $\GWig(\lambda,\cX)$ as follows. We first draw a spike $x \in \RR^n$ from the prior $\cX_n$. Then we reveal
$$Y = \lambda xx^\top + \frac{1}{\sqrt n} W$$
where $W$ is drawn from the $n \times n$ $\GOE$ (Gaussian orthogonal ensemble), i.e.\ $W$ is a random symmetric matrix with off-diagonal entries $\mathcal{N}(0,1)$, diagonal entries $\mathcal{N}(0,2)$, and all entries independent (except for symmetry $W_{ij} = W_{ji}$). We denote the unspiked model ($\lambda = 0$) by $\GWig(0)$.
\end{definition}

It is well known that this model admits the following spectral behavior.
\begin{theorem}[\citet{fp,nong-eigv1}]
Let $Y$ be drawn from $\GWig(\lambda,\cX)$ with any spike prior $\cX$ supported on unit vectors ($\|x\|=1$).
\begin{itemize}
\item If $\lambda \le 1$, the top eigenvalue of $Y$ converges almost surely to $2$ as $n \to \infty$, and the top (unit-norm) eigenvector $v$ has trivial correlation with the spike: $\langle v,x \rangle^2 \to 0$ almost surely.
\item If $\lambda > 1$, the top eigenvalue converges almost surely to $\lambda + 1/\lambda > 2$, and $v$ estimates the spike nontrivially: $\langle v,x \rangle^2 \to 1 - 1/\lambda^2$ almost surely.
\end{itemize}
\end{theorem}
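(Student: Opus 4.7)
The plan is to exploit the orthogonal invariance of the GOE to assume without loss of generality that $x = e_1$, and then isolate any outlier eigenvalue via a Schur-complement secular equation. Conditional on $x = e_1$, write
$$Y = \begin{pmatrix} \lambda + w_{11}/\sqrt{n} & w^\top/\sqrt{n} \\ w/\sqrt{n} & W'/\sqrt{n} \end{pmatrix},$$
where $W'$ is an $(n-1)\times(n-1)$ GOE matrix and $w \in \RR^{n-1}$ has independent standard Gaussian entries, independent of $W'$. For any $\mu$ outside the spectrum of $W'/\sqrt{n}$, a would-be eigenvector $(v_1, v')$ of $Y$ with eigenvalue $\mu$ satisfies $v' = v_1(\mu I - W'/\sqrt{n})^{-1}(w/\sqrt{n})$ from the second block, and substituting into the first block yields the secular equation
$$\mu - \lambda - \frac{w_{11}}{\sqrt{n}} \;=\; \frac{1}{n}\, w^\top \left(\mu I - W'/\sqrt{n}\right)^{-1} w.$$

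Next I would show that the right-hand side concentrates, uniformly on compact subsets of $(2,\infty)$, around the Stieltjes transform $g(\mu) = \tfrac{1}{2}(\mu - \sqrt{\mu^2-4})$ of the semicircle law. This rests on three ingredients: the Wigner semicircle law for $W'/\sqrt{n}$; a Hanson--Wright-type bound giving $w^\top M w / n \approx \mathrm{tr}(M)/n$ whenever $w$ is independent of $M$; and edge rigidity $\lambda_{\max}(W'/\sqrt{n}) \to 2$ a.s., which keeps the resolvent well-defined above the bulk. It follows that any outlier eigenvalue of $Y$ obeys $g(\mu) = \mu - \lambda + o(1)$. Since $g \colon (2,\infty) \to (0,1)$ is a decreasing bijection, this equation admits the unique solution $\mu^\star = \lambda + 1/\lambda$ iff $\lambda > 1$, and no solution when $\lambda \le 1$; in the subcritical case Weyl interlacing together with the same edge rigidity pins $\lambda_{\max}(Y) \to 2$.

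For the eigenvector, the unit-norm constraint gives
$$v_1^{-2} \;=\; 1 + \frac{\|v'\|^2}{v_1^2} \;=\; 1 + \frac{1}{n}\, w^\top \left(\mu I - W'/\sqrt{n}\right)^{-2} w.$$
The same concentration argument shows the quadratic form converges to $-g'(\mu^\star) = 1/(\lambda^2-1)$, so $\langle v, x\rangle^2 = v_1^2 \to (\lambda^2-1)/\lambda^2 = 1 - 1/\lambda^2$; in the subcritical regime the trivial-correlation claim follows from delocalization of the top eigenvector of $W/\sqrt{n}$. The main technical obstacle is obtaining the resolvent-quadratic-form concentration uniformly in $\mu$ together with sharp enough edge control on $W'/\sqrt{n}$ to rule out spurious outliers in both regimes; this is precisely where the results of \citet{fp,nong-eigv1} do the heavy lifting, reducing everything to the routine algebra sketched above.
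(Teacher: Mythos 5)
The paper does not prove this statement; it is cited from \citet{fp,nong-eigv1} and used as a black box, so there is no internal proof to compare against. Your sketch is, on the whole, a faithful reconstruction of the standard secular-equation argument that underlies those references: the rotation to $x = e_1$, the Schur-complement reduction, the concentration of $\tfrac1n w^\top(\mu I - W'/\sqrt n)^{-1}w$ around the semicircle Stieltjes transform $g(\mu)=\tfrac12(\mu-\sqrt{\mu^2-4})$, the unique solution $\mu^\star=\lambda+1/\lambda$ of $\mu-\lambda=g(\mu)$ when $\lambda>1$, and the eigenvector-overlap computation $v_1^{-2}=1-g'(\mu^\star)=\lambda^2/(\lambda^2-1)$ are all correct. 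Two points deserve tightening. First, ``Weyl interlacing together with edge rigidity'' only gives the lower bound $\lambda_{\max}(Y)\ge 2-o(1)$; the upper bound $\lambda_{\max}(Y)\le 2+o(1)$ in the subcritical regime comes from the non-solvability of the secular equation above $2+\varepsilon$, not from interlacing. Second, the subcritical eigenvector claim does not literally follow from ``delocalization of the top eigenvector of $W/\sqrt n$,'' since the top eigenvector of $Y$ is a different object; one needs either an isotropic local law at the edge or a finer resolvent estimate showing the top eigenvector of $Y$ has $o(1)$ overlap with $e_1$. These are exactly the pieces the cited references supply, so the sketch is an accurate account of where the technical work lies.
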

\noindent It follows that if $\|x\| \to 1$ in probability then the above convergence holds in probability (instead of almost surely). Thus PCA solves the detection and recovery problems precisely when $\lambda > 1$. In the critical case $\lambda = 1$ or near-critical case $\lambda \to 1$, there is also a test to consistently distinguish the spiked and unspiked models based on their spectra \citep{jo-testing-spiked}; see Appendix~\ref{app:critical} for details. Our goal is now to investigate whether detection is possible when $\lambda < 1$.

As a starting point, we compute the second moment of Lemma~\ref{lem:sec}:
\begin{proposition}
\label{prop:2nd-comp}
Let $\lambda \ge 0$ and let $\cX$ be a spike prior. Let $Q_n = \GWig_n(\lambda,\cX)$ and $P_n = \GWig_n(0)$. Let $x$ and $x'$ be independently drawn from $\cX_n$. Then
$$\Ex_{P_n}\left(\dd[Q_n]{P_n}\right)^2 = \Ex_{x,x'} \exp\left(\frac{n \lambda^2}{2} \langle x,x' \rangle^2\right).$$
\end{proposition}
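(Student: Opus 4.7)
My plan is to expand the second moment by Fubini, conditioning on the spike, and then reduce the computation to evaluating the moment generating function of a linear functional of the GOE.

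First, I would write $Q_n$ as a mixture: letting $P_n^x$ denote the distribution of $Y$ conditional on the spike $x$, we have $Q_n = \Ex_{x \sim \cX_n} P_n^x$, so that $\ddflat[Q_n]{P_n}(Y) = \Ex_x \ddflat[P_n^x]{P_n}(Y)$. Squaring and applying Fubini to pull both expectations (over independent copies $x, x' \sim \cX_n$) outside the $P_n$-expectation gives
\begin{equation*}
\Ex_{P_n}\left(\dd[Q_n]{P_n}\right)^2 = \Ex_{x,x'} \Ex_{P_n}\!\left[\dd[P_n^x]{P_n}(Y)\cdot \dd[P_n^{x'}]{P_n}(Y)\right].
\end{equation*}

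Next, I would write down the density ratio explicitly. Under $P_n$, $Y = W/\sqrt n$ with $W \sim \GOE$, so the density of $Y$ is proportional to $\exp(-\tfrac{n}{4}\tr(Y^2))$, while under $P_n^x$ the density is proportional to $\exp(-\tfrac{n}{4}\tr((Y-\lambda xx^\top)^2))$. Expanding the square and cancelling, this yields
\begin{equation*}
\dd[P_n^x]{P_n}(Y) = \exp\!\left(\tfrac{n\lambda}{2}\, x^\top Y x \;-\; \tfrac{n\lambda^2}{4}\|x\|^4\right),
\end{equation*}
so the inner expectation becomes
\begin{equation*}
\exp\!\left(-\tfrac{n\lambda^2}{4}(\|x\|^4+\|x'\|^4)\right)\cdot \Ex_{P_n}\exp\!\left(\tfrac{n\lambda}{2}(x^\top Y x + x'^\top Y x')\right).
\end{equation*}

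The remaining expectation is the MGF of a linear functional of $W$ at the symmetric matrix $A := (xx^\top + x'x'^\top)/\sqrt n$. Writing $\tr(AW) = \sum_i A_{ii}W_{ii} + 2\sum_{i<j}A_{ij}W_{ij}$ and recalling that GOE diagonal entries have variance $2$ and off-diagonal entries variance $1$, I would verify the standard identity that $\tr(AW)$ is centered Gaussian with variance $2\tr(A^2)$ for any symmetric $A$. Applying this,
\begin{equation*}
\Ex_{P_n}\exp\!\left(\tfrac{n\lambda}{2}\tr(AW)\right) = \exp\!\left(\tfrac{n^2\lambda^2}{8}\cdot 2\tr(A^2)\right) = \exp\!\left(\tfrac{n\lambda^2}{4}\bigl(\|x\|^4 + 2\langle x,x'\rangle^2+\|x'\|^4\bigr)\right),
\end{equation*}
since $\tr((xx^\top + x'x'^\top)^2) = \|x\|^4 + 2\langle x,x'\rangle^2 + \|x'\|^4$. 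The $\|x\|^4$ and $\|x'\|^4$ terms cancel against the prefactor, leaving $\exp(\tfrac{n\lambda^2}{2}\langle x,x'\rangle^2)$, which combined with the outer expectation over $x,x'$ yields the claim.

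The only real obstacle is bookkeeping the GOE normalization: the factor of $2$ in the diagonal variance conspires with the off-diagonal factor to give the clean identity $\Var(\tr(AW)) = 2\tr(A^2)$, and a slip here would propagate constants through the final expression. Everything else is a routine Gaussian-integral manipulation.
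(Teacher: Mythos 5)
Your proof is correct and follows essentially the same route as the paper: both write the likelihood ratio as an expectation of the conditional density ratio over the spike, square it to introduce two independent copies $x,x'$, and then evaluate a Gaussian moment-generating function over $Y$. The only difference is presentational — you make the GOE variance identity $\operatorname{Var}(\operatorname{tr}(AW)) = 2\operatorname{tr}(A^2)$ explicit, whereas the paper folds that bookkeeping into a one-line "simplify the Gaussian MGF" step.
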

\noindent We defer the proof of this proposition until Section~\ref{sec:2nd-comp}. 
For specific choices of the prior $\cX$, our goal will be to show that if $\lambda$ is below some critical $\lambda^*_\cX$, this second moment is bounded as $n \to \infty$ (implying that detection is impossible). We will specifically consider the following types of priors.
\begin{definition}
Let $\cXs$ denote the spherical prior: $x$ is a uniformly random unit vector in $\RR^n$.
\end{definition}
\noindent By spherical symmetry, the spherical prior is equivalent to asking for a test that works for \emph{any} unit-norm spike (i.e.\ no prior). Without loss of generality, any test for the spherical prior depends only on the spectrum.
\begin{definition}
If $\pi$ is a distribution on $\RR$ with $\EE[\pi] = 0$ and $\mathrm{Var}[\pi] = 1$, let $\mathrm{iid}(\pi/\sqrt{n})$ denote the spike prior that samples each coordinate of $x$ independently from $\pi/\sqrt{n}$.
\end{definition}

We will give two general techniques for showing contiguity for various priors. We call the first method the \emph{subgaussian method}, and it is presented in Section~\ref{sec:subg-method}. The idea is that if the correlation $\langle x,x' \rangle$ between two independent draws from the prior is sufficiently subgaussian, this implies strong tail bounds on $\langle x,x' \rangle$ which can be integrated to show that the second moment is bounded. For instance, this gives results in the case of an \iid prior where the entrywise distribution $\pi$ is subgaussian.

In Section~\ref{sec:cond-method} we present our second method, the \emph{conditioning method}, which uses the conditional second moment method and can improve upon the subgaussian method is some cases. It only applies to finitely-supported \iid priors and is based on a result from \citet{bmnn}.

For certain natural priors, we are able to show contiguity for all $\lambda < 1$, matching the spectral threshold. In particular, this holds for the spherical prior $\cXs$ (Corollary~\ref{cor:sphere-prior}), the \iid Gaussian prior $\IID(\cN(0,1/n))$ (Corollary~\ref{cor:gauss-prior}), the \iid Rademacher prior $\IID(\pm 1/\sqrt{n})$ (Corollary~\ref{cor:pmone-wigner}), and more generally for $\IID(\pi/\sqrt{n})$ where $\pi$ is strictly subgaussian (Theorem~\ref{thm:subg-iid}).

Not all priors are as well behaved as those above. In Section~\ref{sec:sparse-rad} we discuss the sparse Rademacher prior, where we see that the PCA threshold is not always optimal.

In Section~\ref{sec:compare} we show that (in some sense) similar priors have the same detection threshold (Proposition~\ref{prop:compare}). One corollary (Corollary~\ref{cor:contig-eigs}) is that regardless of the prior, no test based only on the eigenvalues can succeed below the $\lambda = 1$ threshold.

Our results often yield the limit value of the second moment and therefore imply asymptotic bounds on hypothesis testing via Proposition~\ref{prop:hyptest}; see Appendix~\ref{app:hyptest} for details.

\subsection{Second moment computation}
\label{sec:2nd-comp}

We begin by computing the second moment $\EE_{P_n}[(\ddflat[Q_n]{P_n})^2]$ where $Q_n = \GWig_n(\lambda,\cX)$ and $P_n = \GWig_n(0)$.
First we simplify the likelihood ratio:
\begin{align*}
\dd[Q_n]{P_n} &= \frac{\Ex_{x \sim \cX_n} \exp(-\frac{n}{4} \langle Y-\lambda x x^\top, Y-\lambda x x^\top \rangle)}{\exp(-\frac{n}{4} \langle Y,Y \rangle)} \\
&= \Ex_{x \sim \cX_n} \exp\left( \frac{\lambda n}{2} \langle Y, x x^\top \rangle - \frac{n \lambda^2}{4} \langle x x^\top, x x^\top \rangle \right).
\end{align*}
Now passing to the second moment:
\begin{align*}
\Ex_{P_n}\left(\dd[Q_n]{P_n}\right)^2
&= \Ex_{x,x' \sim \cX_n}\, \Ex_{Y \sim P_n} \exp\left( \frac{\lambda n}{2} \langle Y, x x^\top + x' x'^\top \rangle \right. \\
&\qquad \left. - \frac{n \lambda^2}{4} \left( \langle x x^\top, x x^\top \rangle + \langle x' x'^\top, x' x'^\top \rangle \right) \right),
\intertext{where $x$ and $x'$ are drawn independently from $\cX_n$. We now simplify the Gaussian moment-generating function over the randomness of $Y$, and cancel terms, to arrive at the expression}
&= \Ex_{x,x'} \exp\left(\frac{n \lambda^2}{2} \langle x,x' \rangle^2\right),
\end{align*}
which proves Proposition~\ref{prop:2nd-comp}.

\subsection{The subgaussian method}
\label{sec:subg-method}

In this section we give a general method for controlling the second moment $\EE_{x,x'} \exp\left(\frac{n \lambda^2}{2} \langle x,x' \rangle^2\right)$.
We will need the concept of a subgaussian random variable.
\begin{definition}\label{def:subg}
A $\mathbb{R}^n$-valued random variable $X$ is $\sigma^2$-\emph{subgaussian} if $\EE[X] = 0$ and, for all $v \in \RR^n$,
$\EE \exp(\langle v,X \rangle) \le \exp(\sigma^2 \|v\|^2 / 2 )$.
\end{definition}

\noindent The most general form of the subgaussian method is the following.

\begin{proposition}
\label{prop:subg}
Let $\cX$ be any spike prior. Let $P_n = \GWig_n(0)$ and $Q_n = \GWig_n(\lambda,\cX)$. With $x$ and $x'$ drawn independently from $\cX_n$, suppose $\langle x,x' \rangle$ is $(\sigma^2/n)$-subgaussian for some constant $\sigma$. If $\lambda < 1/\sigma$ then $\EE_{x,x'} \exp\left(\frac{n\lambda^2}{2}\langle x,x' \rangle^2\right)$ is bounded and so $Q_n \contig P_n$.
\end{proposition}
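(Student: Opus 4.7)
The plan is to combine Proposition~\ref{prop:2nd-comp} with a standard integration of subgaussian tail bounds to directly control $\EE_{x,x'}\exp\!\bigl(\tfrac{n\lambda^2}{2}\langle x,x'\rangle^2\bigr)$, and then invoke Lemma~\ref{lem:sec}.

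More concretely, let $Z = \langle x, x' \rangle$, viewed as a scalar random variable (here the subgaussianity hypothesis reduces, by taking $v = t e_i$-type vectors or simply reading Definition~\ref{def:subg} as a scalar statement about $Z$, to $\EE \exp(tZ) \le \exp(\sigma^2 t^2/(2n))$ for all $t \in \RR$). A Chernoff argument then gives the one-sided tail bound $\Pr(|Z| \ge u) \le 2\exp(-n u^2/(2\sigma^2))$ for all $u \ge 0$. I will use this together with the layer-cake identity
\begin{equation*}
\EE\exp\!\left(\tfrac{n\lambda^2}{2}Z^2\right) \;=\; 1 + \int_0^\infty n\lambda^2\, t\, e^{n\lambda^2 t^2/2}\, \Pr(|Z| \ge t)\, dt,
\end{equation*}
which is obtained by writing $e^{n\lambda^2 Z^2/2} - 1 = \int_0^{|Z|} n\lambda^2 t\, e^{n\lambda^2 t^2/2} dt$ and applying Fubini.

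Substituting the tail bound yields
\begin{equation*}
\EE\exp\!\left(\tfrac{n\lambda^2}{2}Z^2\right) \;\le\; 1 + 2n\lambda^2 \int_0^\infty t\, \exp\!\left(-\tfrac{n\,t^2}{2}\bigl(\sigma^{-2} - \lambda^2\bigr)\right) dt \;=\; 1 + \frac{2\lambda^2 \sigma^2}{1 - \lambda^2 \sigma^2},
\end{equation*}
where the integral converges precisely because $\lambda < 1/\sigma$, which makes $\sigma^{-2} - \lambda^2 > 0$. Crucially, the factors of $n$ cancel, leaving a bound that is a constant independent of $n$. Combining this with Proposition~\ref{prop:2nd-comp} shows that the second moment $\EE_{P_n}(\mathrm{d}Q_n/\mathrm{d}P_n)^2$ is uniformly bounded in $n$, and contiguity $Q_n \contig P_n$ follows from Lemma~\ref{lem:sec}.

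The only real subtlety is the interpretation of Definition~\ref{def:subg} when applied to a scalar inner product rather than an $\RR^n$-valued object; once one reads the condition in its obvious scalar form (equivalently, $Z$ is $(\sigma^2/n)$-subgaussian in the standard one-dimensional sense), the rest is a routine computation. The threshold $\lambda = 1/\sigma$ falls out naturally as the point where the Gaussian integrand $\exp\!\bigl(-\tfrac{n}{2}(\sigma^{-2} - \lambda^2)t^2\bigr)$ ceases to be integrable, so there is no slack to recover at this step without a finer-grained hypothesis on $\cX$ (which is what the conditioning method of Section~\ref{sec:cond-method} is for).
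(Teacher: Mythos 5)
Your proof is correct and follows essentially the same route as the paper: both integrate the subgaussian tail bound $\Pr(|\langle x,x'\rangle| \ge t) \le 2e^{-nt^2/2\sigma^2}$ against the exponential, with the paper using $\EE[X]=\int_0^\infty \Pr(X\ge u)\,\mathrm{d}u$ directly and you using the equivalent differentiated (layer-cake) form. Your variant has the minor advantage of producing the explicit $n$-independent bound $1+\tfrac{2\lambda^2\sigma^2}{1-\lambda^2\sigma^2}$ without the paper's implicit truncation of the probability at $1$ near $u=0$.
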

\begin{proof}
Using the well-known subgaussian tail bound $\prob{|\langle x,x' \rangle| \ge t} \le 2 \exp\left(-n t^2/2\sigma^2\right)$, we have
\begin{align*}
\Ex_{x,x'} \exp\left(\frac{n\lambda^2}{2} \langle x,x' \rangle^2\right) &= \int_0^\infty \problr{\exp\left(\frac{n \lambda^2}{2} \langle x, x' \rangle^2\right) \ge u} \, \dee u \\
&= \int_0^\infty \problr{|\langle x, x' \rangle| \ge \sqrt{\frac{2 \log u}{n \lambda^2}}} \, \dee u \\
&\le \int_0^\infty 2 u^{-1/\sigma^2 \lambda^2} \, \dee u
\end{align*}
which is finite (uniformly in $n$) provided $\lambda < 1/\sigma$.
\end{proof}

We next show that it is sufficient for the prior itself to be (multivariate) subgaussian.

\begin{proposition}
\label{prop:subg-total}
Let $P_n = \GWig_n(0)$ and $Q_n = \GWig_n(\lambda,\cX)$. Suppose $\cX_n$ is $(\sigma^2/n)$-subgaussian. If $\lambda < 1/\sigma$ then $Q_n \contig P_n$.
\end{proposition}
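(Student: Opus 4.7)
The plan is to reduce Proposition~\ref{prop:subg-total} to Proposition~\ref{prop:subg} via the conditional second moment method (Lemma~\ref{lem:cond}). Conditioning on $x'$ and using the vector subgaussianity of $\cX_n$ yields $\Ex_x[\exp(s\langle x,x'\rangle) \mid x'] \le \exp(\sigma^2 s^2 \|x'\|^2/(2n))$, so the effective \emph{scalar} subgaussian parameter of $\langle x,x'\rangle$ is $\sigma^2 \|x'\|^2/n$, which is not deterministically bounded by $\sigma^2/n$. This is the main obstacle, and the fix is to truncate to the high-probability event $\{\|x\|^2 \le 1+\delta\}$, which is available because $\cX$ is a spike prior.

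Concretely, choose $\delta > 0$ small enough that $\lambda^2 \sigma^2 (1+\delta) < 1$; this is possible since $\lambda < 1/\sigma$. Let $\omega_n = \{\|x\|^2 \le 1+\delta\}$; by Definition~\ref{def:prior}, $\prob{\omega_n} = 1 - o(1)$ under $\cX_n$. Let $\tilde\cX_n$ denote the law of $x \sim \cX_n$ conditioned on $\omega_n$, and set $\tilde Q_n = \GWig_n(\lambda, \tilde\cX)$. Rerunning the computation of Section~\ref{sec:2nd-comp} verbatim yields
$$\Ex_{P_n}\left[\left(\dd[\tilde Q_n]{P_n}\right)^2\right] = \Ex_{x,x' \sim \tilde\cX_n} \exp\left(\frac{n\lambda^2}{2}\langle x,x'\rangle^2\right).$$
Since for any $v \in \RR^n$,
$$\Ex_{\tilde\cX_n}\exp(\langle v, x\rangle) \le \frac{1}{\prob{\omega_n}}\, \Ex_{\cX_n}\exp(\langle v,x\rangle) \le (1+o(1))\exp\!\left(\frac{\sigma^2\|v\|^2}{2n}\right),$$
the prior $\tilde\cX_n$ is asymptotically $(\sigma^2/n)$-subgaussian; combined with the deterministic bound $\|x'\|^2 \le 1+\delta$ on its support, conditioning on $x'$ and applying Chernoff yields
$$\prob{|\langle x,x'\rangle| \ge t \mid x'} \le 2(1+o(1))\exp\!\left(-\frac{nt^2}{2\sigma^2(1+\delta)}\right),$$
a bound uniform in $x'$ in the support of $\tilde\cX_n$.

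Integrating exactly as in the proof of Proposition~\ref{prop:subg} then gives
$$\Ex_{x,x' \sim \tilde\cX_n}\exp\left(\frac{n\lambda^2}{2}\langle x,x'\rangle^2\right) \le 1 + 2(1+o(1))\int_1^\infty u^{-1/(\sigma^2\lambda^2(1+\delta))}\,\dee u,$$
which is finite and bounded uniformly in $n$ precisely because $\sigma^2\lambda^2(1+\delta) < 1$ by our choice of $\delta$. Lemma~\ref{lem:cond} then delivers $Q_n \contig P_n$, completing the plan. The key technical point, as flagged above, is that one cannot take $\langle x,x'\rangle$ to be $(\sigma^2/n)$-subgaussian outright; truncation by the spike-prior normalization is what replaces the random parameter $\sigma^2\|x'\|^2/n$ by $\sigma^2(1+\delta)/n$ at the cost of only a $1+o(1)$ factor in the resulting MGF bound.
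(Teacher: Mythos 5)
Your proof is correct and follows essentially the same strategy as the paper's: both condition the prior on a high-probability norm bound, invoke the conditional second moment method (Lemma~\ref{lem:cond}), and then transfer the $(\sigma^2/n)$-subgaussianity of $\cX_n$ to a uniform Chernoff bound on $\langle x,x'\rangle$ for the truncated prior. The only cosmetic differences are that the paper conditions on $\|x\|\le 1+\delta$ and replaces one copy of $\tilde\cX$ by $\cX$ before invoking Proposition~\ref{prop:subg}, whereas you condition on $\|x\|^2\le 1+\delta$ and redo the integration inline with an explicit $(1+o(1))$ prefactor; neither difference is substantive.
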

\begin{proof}
Let $\delta > 0$. We use the conditional second moment method (Lemma~\ref{lem:cond}), taking $\tilde \cX_n$ to be the conditional distribution of $\cX_n$ given the $(1-o(1))$-probability event $\|x\| \le 1+\delta$. With $\tilde Q_n = \GWig_n(\lambda,\tilde \cX)$, the conditional second moment $\EE_{P_n}(\ddflat[\tilde Q_n]{P_n})^2$ is (by Proposition~\ref{prop:2nd-comp})
$$\Ex_{x,x' \sim \tilde\cX} \exp\left(\frac{n\lambda^2}{2} \langle x,x' \rangle^2\right) \le (1+o(1)) \Ex_{x \sim \cX,\, x' \sim \tilde\cX} \exp\left(\frac{n\lambda^2}{2} \langle x,x' \rangle^2\right).$$
With $x \sim \cX$ and $x' \sim \tilde \cX$, we have that $\langle x,x' \rangle$ is $(\sigma^2(1+\delta)^2/n)$-subgaussian because for any $v \in \mathbb{R}$,
$$\Ex_{x \sim \cX,\, x' \sim \tilde \cX} \exp(v \langle x,x' \rangle)
\le \Ex_{x' \sim \tilde \cX} \exp(\sigma^2 v^2 \|x'\|^2/2n) \le \exp(\sigma^2 v^2 (1+\delta)^2/2n).$$
Choosing $\delta$ small enough so that $\lambda < 1/(\sigma (1+\delta))$, the result now follows from Proposition~\ref{prop:subg}.
\end{proof}

Specializing to \iid priors, it is sufficient for the distribution of each entry to be subgaussian. In this case we can also compute the limit value of the (conditional) second moment.

\begin{theorem}[subgaussian method for \iid priors]
\label{thm:subg-iid}
Let $\pi$ be a mean-zero unit-variance distribution on $\RR$ and let $\cX = \mathrm{iid}(\pi/\sqrt{n})$. Let $P_n = \GWig_n(0)$, $Q_n = \GWig_n(\lambda,\cX)$, and $\tilde Q_n$ as in the proof of Proposition~\ref{prop:subg-total}. Suppose $\pi$ is $\sigma^2$-subgaussian. If $\lambda < \frac{1}{\sigma}$ then 
$\lim_{n\to\infty} \EE_{P_n}(\ddflat[\tilde Q_n]{P_n})^2 = (1-\lambda^2)^{-1/2} < \infty$ and so $Q_n \contig P_n$.
\end{theorem}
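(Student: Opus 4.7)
The plan is to linearize the quadratic exponent via the Hubbard–Stratonovich identity
\[
\exp\!\left(\tfrac{n\lambda^2}{2}\langle x,x'\rangle^2\right)=\Ex_{g\sim\cN(0,1)}\exp\!\left(g\lambda\sqrt{n}\,\langle x,x'\rangle\right),
\]
which reduces matters to the linear MGF $\EE\exp(g\lambda\sqrt n\,\langle x,x'\rangle)$ and an integration over $g$. Writing $x_i=X_i/\sqrt n$ with $X_i\stackrel{\mathrm{iid}}{\sim}\pi$, independence across coordinates (conditional on $x'$) gives the product
$\EE_{x\sim\cX}\exp(g\lambda\sqrt n\,\langle x,x'\rangle)=\prod_i M(g\lambda\,x_i')$, where $M(s):=\EE_{X\sim\pi}\exp(sX)$. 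Because $\EE X=0$, $\EE X^2=1$, and subgaussianity controls all higher moments in terms of $\sigma$, one has the small-$s$ expansion $\log M(s)=s^2/2+O(s^3)$, whence
$\sum_i\log M(g\lambda\,x_i')=\tfrac{g^2\lambda^2}{2}\|x'\|^2+O\!\big(|g\lambda|^3\cdot\max_i|x_i'|\cdot\|x'\|^2\big)$.
Since $\|x'\|^2\to 1$ by the LLN and $\max_i|x_i'|\to 0$ by the subgaussian union bound $\prob{\max_i|X_i|>t\sqrt n}\le 2n\exp(-nt^2/2\sigma^2)$, the product converges pointwise (in probability over $x'$, for each fixed $g$) to $\exp(g^2\lambda^2/2)$.

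The main obstacle is justifying dominated convergence in $g$. The naive bound $\prod_i M(g\lambda\, x_i')\le(1-\sigma^4 g^2\lambda^2/n)^{-n/2}$ that one gets without conditioning has infinite Gaussian integral at every finite $n$, which is precisely why the \emph{un}conditioned second moment can diverge. The conditioning onto $\tilde\cX=\{x:\|x\|\le 1+\delta\}$ is there to fix this: the subgaussianity of $\pi$ yields, uniformly in $n$ and in $x'$ in the support of $\tilde\cX$,
\[
\prod_{i=1}^n M(g\lambda\, x_i')\le\exp\!\left(\tfrac{\sigma^2 g^2\lambda^2\|x'\|^2}{2}\right)\le\exp\!\left(\tfrac{\sigma^2 g^2\lambda^2(1+\delta)^2}{2}\right),
\]
which is Gaussian-integrable in $g$ as soon as $\sigma\lambda(1+\delta)<1$; this is achievable for some $\delta>0$ since $\lambda<1/\sigma$.

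Combining the pointwise limit with this uniform envelope and invoking dominated convergence (first over $x'\sim\tilde\cX$ via the deterministic envelope, then over $g\sim\cN(0,1)$) gives
$\Ex_{x\sim\cX,\,x'\sim\tilde\cX}\exp(n\lambda^2\langle x,x'\rangle^2/2)\to\int\exp(g^2\lambda^2/2)\,\dee\gamma(g)=(1-\lambda^2)^{-1/2}$,
finite because $\sigma\ge 1$ forces $\lambda<1$; here $\gamma$ is the standard Gaussian. Transferring this to the fully conditioned second moment $\EE_{P_n}(\ddflat[\tilde Q_n]{P_n})^2$ is then bookkeeping. The upper bound is the identity already used in the proof of Proposition~\ref{prop:subg-total}. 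For the matching lower bound, I would write
$\EE_{x\sim\cX,\,x'\sim\tilde\cX}[f]=\prob{\|x\|\le 1+\delta}\,\EE_{x,x'\sim\tilde\cX}[f]+\EE_{x\sim\cX,\,x'\sim\tilde\cX}[f\cdot\mathbf 1_{\|x\|>1+\delta}]$
and dispose of the tail term by Cauchy–Schwarz, using a uniform $L^2$ bound on $f$ (obtained by rerunning the same subgaussian calculation with $2g\lambda$ in place of $g\lambda$, still integrable once $\delta$ is small enough that $2\sigma\lambda(1+\delta)<2$ is compatible with $\sigma\lambda(1+\delta)<1$), together with $\prob{\|x\|>1+\delta}=o(1)$. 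Both directions then give $(1-\lambda^2)^{-1/2}$, and $Q_n\contig P_n$ follows from Lemma~\ref{lem:cond}.
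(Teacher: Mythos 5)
Your argument takes a genuinely different route from the paper's. The paper obtains the limit of the conditional second moment $\EE_{x,x'\sim\tilde\cX}\exp\!\left(\tfrac{n\lambda^2}{2}\langle x,x'\rangle^2\right)$ directly: the central limit theorem gives $\sqrt n\,\langle x,x'\rangle\xrightarrow{d}\cN(0,1)$, the continuous mapping theorem transfers this to $\exp(\tfrac{n\lambda^2}{2}\langle x,x'\rangle^2)\xrightarrow{d}\exp(\tfrac{\lambda^2}{2}\chi_1^2)$, and uniform integrability (supplied by the tail bound in the proof of Proposition~\ref{prop:subg}) upgrades convergence in distribution to convergence of expectations. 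Your Hubbard--Stratonovich linearization plus dominated convergence is an alternative that makes transparent exactly where subgaussianity of $\pi$ and the conditioning on $\|x'\|\le 1+\delta$ enter: the conditioning produces a deterministic, $n$-independent, Gaussian-integrable envelope in $g$. The main computation of $\lim_n\EE_{x\sim\cX,\,x'\sim\tilde\cX}\exp(\tfrac{n\lambda^2}{2}\langle x,x'\rangle^2)=(1-\lambda^2)^{-1/2}$ is correct as you have written it.

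There is, however, a genuine gap in the final transfer from $\EE_{x\sim\cX,\,x'\sim\tilde\cX}$ to the actual conditional second moment $\EE_{x,x'\sim\tilde\cX}$. You dispose of the tail term by Cauchy--Schwarz together with a claimed uniform $L^2$ bound on $f=\exp(\tfrac{n\lambda^2}{2}\langle x,x'\rangle^2)$. Running the same envelope argument for $f^2=\exp(n\lambda^2\langle x,x'\rangle^2)$ replaces $\lambda$ by $\sqrt 2\,\lambda$, so the envelope is Gaussian-integrable only when $\sqrt 2\,\sigma\lambda(1+\delta)<1$ (and with your $2g\lambda$ substitution, which bounds $\EE f^4$ rather than $\EE f^2$, the condition is $2\sigma\lambda(1+\delta)<1$, not $<2$ as you wrote). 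Neither condition is implied by $\lambda<1/\sigma$: for $\lambda\in\left(1/(\sqrt2\,\sigma),\,1/\sigma\right)$ one has $\EE f^2=\infty$ for every $n$, and Cauchy--Schwarz gives nothing. The repair is to use H\"older with exponent $p$ close to $1$ rather than $2$: bound $\EE[f\,\one_{\|x\|>1+\delta}]\le(\EE f^p)^{1/p}\,\prob{\|x\|>1+\delta}^{1-1/p}$, where the same envelope argument shows $\EE f^p$ is bounded whenever $\sqrt p\,\sigma\lambda(1+\delta)<1$, which is achievable for some $p>1$ and $\delta>0$ precisely because $\sigma\lambda<1$, while $\prob{\|x\|>1+\delta}\to 0$ exponentially by subgaussianity of $\pi$. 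With that replacement your route closes.
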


\begin{proof}
Since $\pi$ is $\sigma^2$-subgaussian, it follows easily from the definition that $\cX_n$ is $(\sigma^2/n)$-subgaussian and so contiguity follows from Proposition~\ref{prop:subg-total}. To compute the limit value, by the central limit theorem we have that for $x,x' \sim \cX$, $\sqrt{n} \langle x,x' \rangle$ converges in distribution to $\cN(0,1)$. The same holds for $x,x' \sim \tilde \cX$. By the continuous mapping theorem applied to $g(z) = \exp\left(\lambda^2 z^2/2\right)$, we also get convergence in distribution
$\exp\left(n \lambda^2 \langle x, x' \rangle^2 / 2 \right) \xrightarrow{d} \exp\left( \lambda^2 \chi_1^2 / 2 \right)$.
The convergence in expectation $\mathbb{E}_{x,x' \sim \tilde\cX}\exp\left(n \lambda^2 \langle x, x' \rangle^2/2 \right) \rightarrow \mathbb{E}\exp\left( \lambda^2 \chi_1^2 / 2 \right) = (1-\lambda^2)^{-1/2}$
follows since the sequence $\exp\left(n \lambda^2 \langle x, x' \rangle^2 / 2 \right)$ is uniformly integrable; this is clear from the final step of the proof of Proposition~\ref{prop:subg} (which has no dependence on $n$).
\end{proof}

Since $\mathrm{Var}[\pi] = 1$, $\pi$ cannot be $\sigma^2$-subgaussian with $\sigma < 1$. If $\pi$ is $1$-subgaussian (``strictly subgaussian'') then Theorem~\ref{thm:subg-iid} gives a tight result, matching the spectral threshold. For instance, the standard Gaussian distribution is 1-subgaussian, so we have the following.

\begin{corollary}
\label{cor:gauss-prior}
If $\lambda < 1$ then $\GWig(\lambda,\IID(\cN(0,1/n))) \contig \GWig(0)$.
\end{corollary}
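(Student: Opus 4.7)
The plan is to apply Theorem~\ref{thm:subg-iid} directly with the choice $\pi = \cN(0,1)$, so the only work is to verify the hypotheses of that theorem with the sharpest possible subgaussian constant $\sigma = 1$. First I would note that $\pi = \cN(0,1)$ has mean zero and unit variance, so it is a valid entrywise distribution in the sense required by the theorem, and $\IID(\pi/\sqrt{n})$ indeed coincides with $\IID(\cN(0,1/n))$. Next I would check that $\pi$ is $1$-subgaussian: for $Z \sim \cN(0,1)$ the moment-generating function satisfies
\[
\Ex \exp(vZ) = \exp(v^2/2) \qquad \text{for all } v \in \mathbb{R},
\]
so Definition~\ref{def:subg} is satisfied with $\sigma^2 = 1$ (in fact with equality, which is why the Gaussian case gives the tightest possible threshold from this method).

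With these two items verified, Theorem~\ref{thm:subg-iid} applies and yields, for every $\lambda < 1/\sigma = 1$, that the conditional second moment $\EE_{P_n}(\ddflat[\tilde Q_n]{P_n})^2$ converges to $(1-\lambda^2)^{-1/2} < \infty$, and therefore $Q_n \contig P_n$, where $Q_n = \GWig_n(\lambda,\IID(\cN(0,1/n)))$ and $P_n = \GWig_n(0)$. There is no real obstacle here — the statement is a one-line specialization — the only ``content'' is the observation that the standard Gaussian is strictly (i.e.\ $1$-)subgaussian, which is precisely why the subgaussian method gives a result matching the spectral threshold $\lambda = 1$ in this case.
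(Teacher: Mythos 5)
Your proposal is correct and is exactly the paper's argument: observe that the standard Gaussian is $1$-subgaussian via its moment-generating function and then invoke Theorem~\ref{thm:subg-iid} with $\sigma = 1$. Nothing further is needed.
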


\noindent Note that the \iid Gaussian prior is very similar to the spherical prior; in Section~\ref{sec:compare} we show how to transfer the proof to the spherical prior.

\subsection{Application: the Rademacher prior}

If $\pi$ is a Rademacher random variable (uniform on $\{\pm 1\}$) then $\IID(\pi/\sqrt{n})$ is the \emph{Rademacher prior}, which we abbreviate as $\IID(\pm 1/\sqrt{n})$. This case of the Gaussian Wigner model has been studied by \citet{dam} and \citet{sdp-phase} as a Gaussian model for community detection and $\mathbb{Z}/2$ synchronization. The former proves that the spectral threshold $\lambda = 1$ is precisely the threshold above which nontrivial recovery of the signal is possible. We further show contiguity below this $\lambda = 1$ threshold (which, recall, is not implied by non-recovery).

\begin{corollary}\label{cor:pmone-wigner}
If $\lambda < 1$ then $\GWig(\lambda,\IID(\pm 1/\sqrt{n})) \contig \GWig(0)$.
\end{corollary}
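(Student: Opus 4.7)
The plan is to obtain this corollary as a direct specialization of Theorem~\ref{thm:subg-iid} to the Rademacher prior, so the only real content is verifying that the Rademacher distribution $\pi$ (uniform on $\{\pm 1\}$) is strictly subgaussian, i.e.\ $1$-subgaussian. Once this is in hand, Theorem~\ref{thm:subg-iid} applied with $\sigma = 1$ immediately gives contiguity for every $\lambda < 1/\sigma = 1$, which is exactly the claimed range.

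To verify $1$-subgaussianity of $\pi$, I would compute its moment-generating function directly: for $v \in \mathbb{R}$,
\[
\Ex \exp(v\pi) = \tfrac{1}{2}(e^{v} + e^{-v}) = \cosh(v) = \sum_{k \ge 0} \frac{v^{2k}}{(2k)!},
\]
and compare term-by-term with the target bound
\[
\exp(v^2/2) = \sum_{k \ge 0} \frac{v^{2k}}{2^k k!}.
\]
Since $(2k)! \ge 2^k k!$ for every $k \ge 0$ (easy induction, or just $(2k)!/k! = k!\binom{2k}{k} \ge 2^k k!$ since $\binom{2k}{k} \ge 2^k$ for $k \ge 0$ in fact $2^k$ divides $(2k)!/k!$), each coefficient on the left is at most the corresponding coefficient on the right. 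Thus $\Ex \exp(v\pi) \le \exp(v^2/2)$ for all $v$, so $\pi$ is $1$-subgaussian, and since $\pi$ has mean zero this matches Definition~\ref{def:subg} with $\sigma^2 = 1$.

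Now I plug into Theorem~\ref{thm:subg-iid}: with $\cX = \IID(\pi/\sqrt{n})$ and $\sigma = 1$, any $\lambda < 1$ satisfies $\lambda < 1/\sigma$, so the (conditional) second moment stays bounded (in fact it converges to $(1-\lambda^2)^{-1/2}$), and therefore $\GWig(\lambda,\IID(\pm 1/\sqrt{n})) \contig \GWig(0)$. There is essentially no obstacle here; the only mild subtlety is making sure that the subgaussianity constant of $\pi$ is exactly $1$ rather than something slightly larger (e.g.\ a general Hoeffding-type bound would give $\sigma^2 = 1$ using that $\pi$ is bounded in $[-1,1]$, which would still suffice). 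Either route produces the tight threshold $\lambda < 1$, matching the spectral transition.
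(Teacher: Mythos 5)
Your proof is correct and follows the same route as the paper: verify that the Rademacher distribution $\pi$ is $1$-subgaussian and then invoke Theorem~\ref{thm:subg-iid} with $\sigma = 1$. The only cosmetic difference is that you check $\cosh(v) \le \exp(v^2/2)$ by a direct term-by-term series comparison, whereas the paper simply cites Hoeffding's lemma; both give the same sharp constant.
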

\begin{proof}
The Rademacher distribution is 1-subgaussian by Hoeffding's lemma, so the proof follows from Theorem~\ref{thm:subg-iid}.
\end{proof}

Perhaps it is surprising that the spectral threshold is optimal for the Rademacher prior because it suggests that there is no way to exploit the $\pm 1$ structure. However, PCA is only optimal in terms of the threshold and not in terms of error in recovering the spike once $\lambda > 1$. An efficient estimator that asymptotically minimizes the mean squared error is the approximate message passing algorithm of \citet{dam}.

\subsection{Comparison of similar priors}
\label{sec:compare}

We show that two similar priors have the same contiguity threshold, in the following sense.
\begin{proposition}
\label{prop:compare}
Let $\lambda^* \ge 0$. Let $\cX$ and $\mathcal{Y}$ be spike priors. Suppose that $x \sim \cX_n$ and $y \sim \mathcal{Y}_n$ can be coupled such that $y = \alpha x$ where $\alpha = \alpha_n$ is a random variable with $\alpha_n \to 1$ in probability as $n\to\infty$. Suppose that for each $\lambda < \lambda^*$, the second moment $\EE_{x,x' \sim \cX} \exp\left(\frac{n\lambda^2}{2}\langle x,x' \rangle^2\right)$ remains bounded as $n \to \infty$. Then for any $\lambda < \lambda^*$, $\GWig(\lambda,\mathcal{Y}) \contig \GWig(0)$.
\end{proposition}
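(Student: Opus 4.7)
The strategy is to apply the conditional second moment method (Lemma~\ref{lem:cond}) to a slightly modified version of $\mathcal{Y}$, and then use the coupling $y = \alpha x$ to reduce the resulting moment to the one assumed bounded for $\cX$, at a parameter only slightly inflated above $\lambda$. Intuitively, since $\alpha$ concentrates at $1$, two independent $y$-samples behave almost like two independent $x$-samples, but we must pay a multiplicative $(1+\delta)^2$ for the slack in $\alpha$.

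Given $\lambda < \lambda^*$, I would first pick $\delta > 0$ small enough that $\tilde\lambda := \lambda (1+\delta)^2$ still satisfies $\tilde\lambda < \lambda^*$; this uses the freedom that the hypothesis holds for \emph{every} sub-critical parameter. Let $E_n = \{|\alpha_n - 1| \le \delta\}$; by hypothesis $\Pr[E_n] = 1 - o(1)$. Let $\tilde{\mathcal{Y}}_n$ be the law of $y$ under the coupling conditional on $E_n$, and set $\tilde Q_n = \GWig_n(\lambda, \tilde{\mathcal{Y}})$. Note that $\tilde{\mathcal{Y}}$ is a valid spike prior in the sense of Definition~\ref{def:prior}, since $\|y\| = |\alpha|\,\|x\| \to 1$ in probability.

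By Proposition~\ref{prop:2nd-comp}, the conditional second moment equals
\begin{equation*}
\EE_{P_n}\!\left(\dd[\tilde Q_n]{P_n}\right)^2 \;=\; \EE_{y, y' \sim \tilde{\mathcal{Y}}_n} \exp\!\left(\tfrac{n\lambda^2}{2}\langle y, y'\rangle^2\right).
\end{equation*}
Passing to independent copies $(x, \alpha), (x', \alpha')$ of the coupling and writing $y = \alpha x$, $y' = \alpha' x'$, the right-hand side equals $\Pr[E_n]^{-2}\, \EE\bigl[\exp\!\bigl(\tfrac{n\lambda^2 \alpha^2 (\alpha')^2}{2}\langle x, x'\rangle^2\bigr) \mathbf{1}_{E_n \cap E'_n}\bigr]$. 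On $E_n \cap E'_n$ we have $\alpha^2 (\alpha')^2 \le (1+\delta)^4$, so replacing this factor by its bound and then dropping the non-negative indicator yields
\begin{equation*}
\EE_{P_n}\!\left(\dd[\tilde Q_n]{P_n}\right)^2 \;\le\; \frac{1}{\Pr[E_n]^2}\, \EE_{x, x' \sim \cX}\exp\!\left(\tfrac{n\tilde\lambda^2}{2}\langle x, x'\rangle^2\right) \;=\; (1+o(1))\, \EE_{x, x' \sim \cX}\exp\!\left(\tfrac{n\tilde\lambda^2}{2}\langle x, x'\rangle^2\right),
\end{equation*}
which is bounded in $n$ by the assumed bound at $\tilde\lambda < \lambda^*$. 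Lemma~\ref{lem:cond} then yields $Q_n \contig P_n$.

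The argument is largely bookkeeping; the genuinely substantive step is the parameter inflation $\lambda \mapsto \lambda(1+\delta)^2$ forced by the multiplicative error in the coupling, which is precisely why the hypothesis must be invoked at a strictly larger parameter than the target $\lambda$. A minor technicality is that Lemma~\ref{lem:cond} nominally conditions on an event in the $Y$-sample space, whereas $E_n$ lives on the auxiliary coupling variable $\alpha$; this is handled exactly as in the proof of Proposition~\ref{prop:subg-total}, by viewing $Q_n$ implicitly as a joint law on $(x, y, Y)$ or, equivalently, by noting that replacing $\mathcal{Y}$ by $\tilde{\mathcal{Y}}$ changes the total variation of the resulting matrix model by $o(1)$, so any contiguity conclusion transfers from $\tilde Q_n$ to $Q_n$.
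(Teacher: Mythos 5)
Your proof is correct and follows essentially the same route as the paper's: condition $\mathcal{Y}$ on a high-probability event bounding $\alpha$, write the conditional second moment via Proposition~\ref{prop:2nd-comp}, use the coupling to rewrite $\langle y,y'\rangle^2 = (\alpha\alpha')^2\langle x,x'\rangle^2$, bound $(\alpha\alpha')^2$ by $(1+\delta)^4$, drop the indicator, and invoke the hypothesis at the inflated parameter $\lambda(1+\delta)^2 < \lambda^*$ before applying Lemma~\ref{lem:cond}. The only cosmetic difference is that the paper conditions on the one-sided event $\alpha\le 1+\delta$ (which is all that is needed) whereas you use the two-sided $|\alpha-1|\le\delta$; this changes nothing. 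Your closing remark about $E_n$ living on the latent coupling variable is a real subtlety and is handled exactly as you say.
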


\begin{proof}
Let $\lambda < \lambda^*$ and $\delta > 0$. Let $\tilde{\mathcal{Y}}$ be the conditional distribution of $\mathcal{Y}$ given the $(1-o(1))$-probability event $\alpha \le 1+\delta$. Letting $\tilde Q_n = \GWig(\lambda,\tilde{\mathcal{Y}})$ and $P_n = \GWig(0)$, we have
\begin{align*}
\dd[\tilde Q_n]{P_n} &= \Ex_{y,y' \sim \tilde{\mathcal{Y}}} \exp\left(\frac{n\lambda^2}{2}\langle y,y' \rangle^2\right) \\
&= (1+o(1))\Ex_{x,x' \sim \cX} \one_{\alpha \le 1+\delta}\, \one_{\alpha' \le 1+\delta} \exp\left(\frac{n\lambda^2}{2} (\alpha \alpha')^2 \langle x,x' \rangle^2\right) \\
&\le (1+o(1))\Ex_{x,x' \sim \cX} \exp\left(\frac{n\lambda^2}{2} (1+\delta)^4 \langle x,x' \rangle^2\right)
\end{align*}
which is bounded provided we choose $\delta$ small enough so that $\lambda(1+\delta)^2 < \lambda^*$. The result now follows from the conditional second moment method (Lemma~\ref{lem:cond}).
\end{proof}

We can now show that the spectral threshold is optimal for the spherical prior (uniform on the unit sphere) by comparison to the \iid Gaussian prior; this result was obtained previously by \citet{mrz-sphere,jo-testing-spiked}.
\begin{corollary}\label{cor:sphere-prior}
If $\lambda < 1$ then $\GWig(\lambda,\cXs) \contig \GWig(0)$.
\end{corollary}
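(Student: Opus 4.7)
The plan is to transfer the contiguity result for the \iid Gaussian prior (Corollary~\ref{cor:gauss-prior}) to the spherical prior via Proposition~\ref{prop:compare}. I would construct the canonical coupling: sample $g \sim \cN(0, I_n/n)$, put $x := g$, and put $y := g/\|g\|$. By the rotational invariance of the isotropic Gaussian measure, $y$ is uniform on $S^{n-1}$, i.e.\ $y \sim \cXs$; the coupling then takes the form $y = \alpha x$ with $\alpha := 1/\|g\|$. The weak law of large numbers applied to $\|g\|^2 = \sum_{i=1}^n g_i^2$ (whose summands have common mean $1/n$) yields $\|g\|^2 \to 1$ in probability, and hence $\alpha \to 1$ in probability.

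To apply Proposition~\ref{prop:compare} with $\lambda^* = 1$, one needs the second moment $\EE_{x,x' \sim \cX} \exp(n\lambda^2\langle x,x'\rangle^2/2)$ to be bounded for each $\lambda < 1$. A subtle point, and the main technical obstacle of the proof, is that this expectation is in fact $+\infty$ for the raw \iid Gaussian prior: conditioning on $x'$, the inner Gaussian integral evaluates to $(1-\lambda^2\|x'\|^2)^{-1/2}$ when $\lambda^2\|x'\|^2 < 1$ and diverges on the event $\|x'\|^2 \ge 1/\lambda^2$, which has positive (though exponentially small in $n$) probability. Note also that the conditioning $\alpha \le 1+\delta$ appearing inside the proof of Proposition~\ref{prop:compare} corresponds, under our coupling, to a \emph{lower} bound on $\|g\|$ rather than an upper one, so it does not rule out the blow-up from large $\|g\|$.

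The fix I would make explicit is to further truncate $\cX$ before invoking Proposition~\ref{prop:compare}: let $\tilde\cX$ be the \iid Gaussian prior additionally conditioned on the $(1-o(1))$-event $\|g\| \le 1+\eta$. This conditioning preserves rotational invariance (it conditions only on the radial component), so the coupling $y = g/\|g\|$ with $g \sim \tilde\cX$ still produces $y \sim \cXs$. Proposition~\ref{prop:subg-total} applied to $\tilde\cX$, using that $\cN(0,1)$ is $1$-subgaussian, shows that the second moment of $\tilde\cX$ is bounded for $\lambda < 1/(1+\eta)$; Proposition~\ref{prop:compare} applied with this truncated reference prior then gives $\GWig(\lambda,\cXs) \contig \GWig(0)$ provided $\lambda(1+\delta)^2 < 1/(1+\eta)$. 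Letting $\delta,\eta\downarrow 0$ covers every $\lambda < 1$, yielding the claim.
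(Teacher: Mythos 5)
Your proof is correct and follows the same route as the paper's: couple a truncated i.i.d.\ Gaussian prior to the spherical prior via rotational invariance and invoke Proposition~\ref{prop:compare}. Your observation that the raw (untruncated) Gaussian second moment is actually $+\infty$, and that the internal conditioning $\alpha \le 1+\delta$ inside Proposition~\ref{prop:compare} amounts only to a lower bound on $\|g\|$ and hence does not by itself cure this, is exactly the reason the paper's proof refers to the ``conditioned version of the i.i.d.\ Gaussian prior'' rather than the raw one as the reference prior.
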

\begin{proof}
We have shown that for any $\lambda < 1$, the second moment is bounded for a conditioned version of the \iid Gaussian prior (conditioning on $\|x\| \le 1 + \delta$); see Corollary~\ref{cor:gauss-prior}. This conditioned Gaussian prior can be coupled to the spherical prior as required by Proposition~\ref{prop:compare}, due to Gaussian spherical symmetry. The result follows from Proposition~\ref{prop:compare}.
\end{proof}
\noindent A more direct proof for the spherical prior is possible using known properties of the confluent hypergeometric function; see Appendix~\ref{app:confluent}.

Another corollary is that \emph{any} prior $\cX$ (with $\|x\| \to 1$ in probability) and for any $\lambda < 1$, contiguity holds on the level of spectra; this implies that no test depending only on the eigenvalues can succeed below the $\lambda = 1$ threshold, even though other tests can in some cases (e.g.\ the sparse Rademacher prior of Section~\ref{sec:sparse-rad}).
\begin{corollary}\label{cor:contig-eigs}
Let $\cX$ be any spike prior (with $\|x\| \to 1$ in probability). Let $Q_n$ be the joint distribution of eigenvalues of $\GWig_n(\lambda,\cX)$ and let $P_n$ be the joint distribution of eigenvalues of $\GWig_n(0)$. If $\lambda < 1$ then $Q_n \contig P_n$.
\end{corollary}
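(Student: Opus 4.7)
The plan is to reduce the arbitrary-prior eigenvalue statement to the already-established spherical case (Corollary~\ref{cor:sphere-prior}) by exploiting the orthogonal invariance of the GOE. Concretely, observe that for any deterministic unit vector $x$, the eigenvalues of $\lambda x x^\top + \frac{1}{\sqrt n}W$ have a distribution that depends only on $\|x\|$: if $O$ is any orthogonal matrix, then conjugating by $O$ preserves eigenvalues and maps $W \mapsto OWO^\top \stackrel{d}{=} W$, so replacing $x$ by $Ox$ leaves the eigenvalue distribution unchanged. This orthogonal symmetry is the key structural fact.

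Using this, I would introduce an auxiliary prior $\cY_n$ defined as follows: sample $x \sim \cX_n$ and a Haar-random orthogonal matrix $R \in O(n)$ independently, and output $y = Rx$. Equivalently $y = \|x\| \cdot u$ where $u$ is uniform on the unit sphere, independent of $\|x\|$. By the orthogonal-invariance observation above (conditioning on $\|x\|$ and using independence of $R$ and $W$), the eigenvalue distribution of $\GWig_n(\lambda,\cX)$ coincides with that of $\GWig_n(\lambda,\cY)$.

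Next, I would compare $\cY$ with the spherical prior $\cXs$ via the coupling $y = \alpha u$ with $u \sim \cXs$ and $\alpha = \|x\|$ (where $x \sim \cX$ is independent of $u$). Since $\cX$ satisfies $\|x\| \to 1$ in probability, we have $\alpha \to 1$ in probability. Combining this with the fact that for the spherical prior the second moment $\EE_{u,u'\sim \cXs}\exp\bigl(\tfrac{n\lambda^2}{2}\langle u,u'\rangle^2\bigr)$ is bounded for every $\lambda<1$ (this is precisely what underlies Corollary~\ref{cor:sphere-prior}), Proposition~\ref{prop:compare} applies with $\cX^{\text{there}} = \cXs$, $\mathcal{Y}^{\text{there}} = \cY$, and $\lambda^* = 1$, yielding $\GWig(\lambda,\cY) \contig \GWig(0)$ at the level of full matrices.

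Finally, I would pass from matrix-level to eigenvalue-level contiguity by a push-forward argument: if $f_n$ is the measurable map sending a symmetric matrix to its (sorted) tuple of eigenvalues, then any eigenvalue event $E_n$ pulls back to a matrix event $f_n^{-1}(E_n)$, so $\GWig(\lambda,\cY)(f_n^{-1}E_n) \to 0$ whenever $\GWig(0)(f_n^{-1}E_n) \to 0$. Combined with the equality in distribution of the eigenvalues under $\GWig(\lambda,\cX)$ and $\GWig(\lambda,\cY)$, this gives $Q_n \contig P_n$ as required. I do not anticipate a substantive obstacle here: the only subtlety is verifying the coupling hypothesis of Proposition~\ref{prop:compare} cleanly (in particular that the random rotation can be inserted without disturbing the eigenvalues), and all analytic content has already been absorbed into the spherical-prior second moment computation.
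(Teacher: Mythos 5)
Your proof is correct and mirrors the paper's own argument: by orthogonal invariance of the GOE the eigenvalue law depends only on $\|x\|$, so the prior may be replaced by a mixture of spherical priors (your auxiliary prior), which is then handled via Proposition~\ref{prop:compare} against the spherical prior, with the resulting matrix-level contiguity pushed forward to eigenvalue tuples. The only slight imprecision is the claim that boundedness of $\EE_{u,u'\sim\cXs}\exp\bigl(\tfrac{n\lambda^2}{2}\langle u,u'\rangle^2\bigr)$ ``underlies'' Corollary~\ref{cor:sphere-prior} --- the main-text proof of that corollary derives contiguity by comparison to the conditioned Gaussian prior and does not itself bound the spherical second moment, so you should either cite the direct computation in Appendix~\ref{app:confluent} or run Proposition~\ref{prop:compare} with the conditioned i.i.d.\ Gaussian prior in place of $\cXs$.
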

\begin{proof}
Due to Gaussian spherical symmetry, the distribution of eigenvalues of the spiked matrix depends only on the norm of the spike and not its direction; thus without loss of generality, $\cX$ is a mixture of spherical priors, over a norm distribution converging in probability to 1. The result now follows from Proposition~\ref{prop:compare} and Corollary~\ref{cor:sphere-prior}.
\end{proof}

\subsection{The conditioning method}
\label{sec:cond-method}

In this section, we give an alternative to the subgaussian method that can give tighter results in some cases. Here we give an overview, with the full details deferred to Appendix~\ref{app:cond-method}. Throughout this section we require the prior to be $\cX = \IID(\pi/\sqrt{n})$ where $\pi$ has finite support.

The main idea is that the second moment takes a particular form involving a multinomial random variable; it turns out that this exact form has been studied by \citet{bmnn} in the context of contiguity in the stochastic block model. Following their work, we apply the conditional second moment method (Lemma~\ref{lem:cond}), conditioning on a high-probability `good' event where the empirical distribution of $x$ is close to $\pi/\sqrt{n}$. Proposition~5 in \citet{bmnn} provides an exact condition (involving an optimization problem over matrices) for boundedness of the conditional second moment. This method improves upon the subgaussian method in some cases (see e.g.\ Section~\ref{sec:sparse-rad}).

Let $\Delta_{s^2}(\pi)$ denote the set of nonnegative vectors $\alpha \in \mathbb{R}^{s^2}$ with row- and column-sums prescribed by $\pi$, i.e.\ treating $\alpha$ as an $s \times s$ matrix, we have (for all $i$) that row $i$ and column $i$ of $\alpha$ each sum to $\pi_i$. Let $D(u,v)$ denote the KL divergence between two vectors: $D(u,v) = \sum_i u_i \log(u_i/v_i)$.

\begin{theorem}[conditioning method]
\label{thm:cond-method}
Let $\mathcal{X} = \mathrm{iid}(\pi)$ where $\pi$ has mean zero, unit variance, and finite support $\Sigma \subseteq \mathbb{R}$ with $|\Sigma| = s$. Let $Q_n = \GWig_n(\lambda,\cX)$ and $P_n = \GWig_n(0)$. Define the $s \times s$ matrix $\beta_{ab} = ab$ for $a,b \in \Sigma$. Identify $\pi$ with the vector of probabilities $\pi \in \RR^\Sigma$, and define $\bar\alpha = \pi \pi^\top$. Let
$$\overline{\lambda}_\cX = \left[\sup_{\alpha \in \Delta_{s^2}(\pi)} \frac{\langle \alpha,\beta \rangle^2}{2D(\alpha,\bar\alpha)}\right]^{-1/2}.$$
If $\lambda < \overline{\lambda}_\cX$ then $Q_n \contig P_n$.
\end{theorem}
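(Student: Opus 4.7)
The plan is to apply the conditional second moment method (Lemma~\ref{lem:cond}), conditioning on the empirical distributions of $x$ and $x'$ being close to $\pi$, and then estimate the resulting conditional second moment via large deviations for the multinomial distribution, following the framework of Proposition~5 in \citet{bmnn}.

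By Proposition~\ref{prop:2nd-comp} the unconditional second moment equals $\mathbb{E}_{x,x'}\exp\bigl(\tfrac{n\lambda^2}{2}\langle x,x'\rangle^2\bigr)$. Writing $x_i = a_i/\sqrt n$ with $a_i \in \Sigma$, the empirical joint type $\alpha_{ab}=\tfrac1n\#\{i:(a_i,b_i)=(a,b)\}$ satisfies $\langle x,x'\rangle = \sum_{a,b}\alpha_{ab}\,ab = \langle \alpha,\beta\rangle$ exactly, and $n\alpha$ is $\mathrm{Multinomial}(n,\bar\alpha)$ with $\bar\alpha = \pi\pi^\top$. For small $\epsilon>0$, I take $\omega_n$ to be the event that both empirical marginals of $x$ and $x'$ lie within $\epsilon$ of $\pi$ in total variation; this occurs with probability $1-o(1)$ by the law of large numbers, and conditioning on it restricts $\alpha$ to a small neighborhood $N_\epsilon$ of $\Delta_{s^2}(\pi)$.

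Sanov's theorem then bounds the number of joint types by $O(n^{s^2})$ and the multinomial probability of each by $\exp\bigl(-n D(\alpha,\bar\alpha)+O(\log n)\bigr)$, so the conditional second moment is at most
\[
\mathrm{poly}(n)\cdot \max_{\alpha \in N_\epsilon}\exp\!\Bigl(n\bigl[\tfrac{\lambda^2}{2}\langle \alpha,\beta\rangle^2 - D(\alpha,\bar\alpha)\bigr]\Bigr).
\]
The hypothesis $\lambda<\overline{\lambda}_\mathcal{X}$ says precisely that $\tfrac{\lambda^2}{2}\langle\alpha,\beta\rangle^2 \le D(\alpha,\bar\alpha)$ for every $\alpha\in\Delta_{s^2}(\pi)$, with equality at $\alpha=\bar\alpha$ (where both sides vanish, using that $\pi$ has mean zero). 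I would extend this inequality to the thickened set $N_\epsilon$ by continuity and compactness: away from $\bar\alpha$ there is a strict gap that survives $\epsilon$-perturbation of the row/column constraints, while near $\bar\alpha$ both quantities are quadratic in $\alpha-\bar\alpha$, reducing the comparison to a Hessian estimate that is stable under small relaxations of $\Delta_{s^2}(\pi)$. Combined with the polynomial prefactor this produces a uniform $O(1)$ bound on the conditional second moment, and Lemma~\ref{lem:cond} then yields the desired contiguity.

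The main obstacle is the behavior near $\bar\alpha$: since $D(\alpha,\bar\alpha)$ and $\langle\alpha-\bar\alpha,\beta\rangle^2$ both vanish to second order there, the supremum defining $\overline{\lambda}_\mathcal{X}$ is really governed by a comparison of two quadratic forms on the tangent space of $\Delta_{s^2}(\pi)$ at $\bar\alpha$, and one has to verify that loosening the marginal constraints by $\epsilon$ does not inflate the $\beta$-quadratic form faster than it shrinks the $D$-quadratic form. Proposition~5 of \citet{bmnn} is formulated for exactly this multinomial setup and packages together the large-deviations estimate, the quadratic-form comparison, and the conditioning argument into a single boundedness criterion matching $\overline{\lambda}_\mathcal{X}$; the cleanest route is therefore to massage the second-moment expression into the form treated there and invoke it directly.
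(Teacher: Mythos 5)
Your proposal follows the same overall route as the paper: condition on the empirical marginals being close to $\pi$, express $\langle x,x'\rangle$ via the joint type $\alpha$ so that the second moment becomes a sum over multinomial types, and then invoke Proposition~5 of \citet{bmnn}. The paper does exactly this, with one additional algebraic step that you omit but would need: since $\pi$ has mean zero, $\bar\alpha$ lies in the kernel of the quadratic form $A_{ab,a'b'}=\tfrac{\lambda^2}{2}aba'b'$, which lets one rewrite $\tfrac1n N^\top A N$ as $Y^\top A Y$ with $Y=(N-n\bar\alpha)/\sqrt n$, matching the form required by that proposition. Your identification of $m<1$ in \citet{bmnn} with $\lambda<\overline\lambda_\cX$ is correct via $\langle\alpha,\beta\rangle=\langle\alpha-\bar\alpha,\beta\rangle$.

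The one substantive flaw is your intermediate claim that the Sanov/method-of-types bound, ``$\mathrm{poly}(n)\cdot\max_{\alpha}\exp\bigl(n[\tfrac{\lambda^2}{2}\langle\alpha,\beta\rangle^2-D(\alpha,\bar\alpha)]\bigr)$,'' yields a uniform $O(1)$ bound once you know the bracketed quantity is nonpositive. It does not. Because both terms vanish to second order at $\alpha=\bar\alpha$, the maximum of the exponent is exactly $0$ (attained at $\bar\alpha$), so the bound you have written is only $\mathrm{poly}(n)$, which does not establish contiguity. No amount of sharpening the Hessian comparison or the $\epsilon$-relaxation can rescue the pointwise-max argument; the real content is that one must \emph{sum} (not max) over the $\Theta(n^{d/2})$ types within $O(1/\sqrt n)$ of $\bar\alpha$, each carrying multinomial mass $\Theta(n^{-d/2})$, and show that the resulting Gaussian integral $\EE[\exp(Z^\top A Z)]$ is finite. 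That is precisely the local-CLT content of \citet{bmnn} Proposition~5, and your final sentence---massage the expression into their form and invoke it directly---is the right conclusion. So the end result is sound, but the ``polynomial prefactor gives $O(1)$'' step in the middle is not a valid deduction and should be replaced by the cited proposition rather than presented as a self-contained argument.
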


\noindent In Appendix~\ref{app:cond-method}, we give the full proof and also compute that the limit value of the conditional second moment is $(1-\lambda^2)^{-1/2}$ (the same as in Theorem~\ref{thm:subg-iid}). We also explain the intuition behind the matrix optimization problem.

\subsection{Application: the sparse Rademacher prior}
\label{sec:sparse-rad}

Now consider the case where $\pi = \sqrt{1/\rho}\,\mathcal{R}(\rho)$ where $\mathcal{R}(\rho)$ is the sparse Rademacher distribution with sparsity $\rho \in (0,1]$: $\mathcal{R}(\rho)$ is 0 with probability $1-\rho$, and otherwise uniform on $\{\pm 1\}$. Here we give a summary of our results, with full details deferred to Appendix~\ref{app:sparse-rad}.

We know from Corollary~\ref{cor:pmone-wigner} that when $\rho = 1$, detection is impossible below the spectral threshold. However, for sufficiently small $\rho$ (roughly 0.054), an exhaustive search procedure is known to perform detection for some range of $\lambda$ values below the spectral threshold \citep{bmvvx}. Towards a matching lower bound, we would like to find $\rho^*$ as small as possible such that PCA is optimal for all $\rho \ge \rho^*$.

Using the subgaussian method (Theorem~\ref{thm:subg-iid}) it follows that PCA is optimal for all $\rho \ge 1/3$. The conditioning method (Theorem~\ref{thm:cond-method}) improves this constant substantially, to roughly $0.184$. Using a more sophisticated method that conditions on an event depending jointly on the signal and noise, \citet{noise-cond} improve the constant further, to roughly $0.138$. Similar (but quantitatively weaker) results have been obtained by \citet{bmvvx}.

Based on heuristics from statistical physics, \citet{LKZ-amp} predicted that the exact $\rho$ value at which PCA becomes sub-optimal is given by the replica-symmetric (RS) formula, which yields $\rho_\mathrm{RS} \approx 0.09$. It was later proven rigorously that $\rho_\mathrm{RS}$ is the exact threshold for nontrivial \emph{recovery} below $\lambda = 1$, and that if $\rho < \rho^*$ then detection below $\lambda=1$ is possible (by thresholding the free energy) \citep{mi,mi-proof,lelarge-limits-lowrank}. It remains open to show that detection is impossible below $\lambda = 1$ for all $\rho \ge \rho_\mathrm{RS}$. \citet{LKZ-amp} also conjecture a computational gap: when $\lambda < 1$, no polynomial-time algorithm can perform detection or recovery (regardless of $\rho$).

\section{Non-Gaussian Wigner models}
\label{sec:nong-wig}

\subsection{Main results}\label{sec:mainngw} We first define the spiked non-Gaussian Wigner model.

\begin{definition}\label{def:spiked-nongaussian-wigner}
In the {\em general spiked Wigner model} $\Wig(\lambda,\cP,\cP_d,\cX)$, one observes a matrix
$$ Y = \lambda x x^\top + \frac{1}{\sqrt{n}}W, $$
with the spike $x$ drawn from a spike prior $\cX$, and the entries of noise matrix $W$ drawn independently up to symmetry, with the off-diagonal entries drawn from a distribution $\cP$ and the diagonal entries drawn from a second distribution $\cP_d$. For the sake of normalization, we assume that $\cP$ has mean $0$ and variance $1$.
\end{definition}

\noindent Recall that the prior $\cX$ is required to obey the normalization $\|x\| \to 1$ in probability (see Definition~\ref{def:prior}).

The spectral behavior of this model is well understood\footnote{Many of the results cited here assume $\|x\| = 1$ and show almost-sure convergence of various quantities. Since we assume only $\|x\| \to 1$ in probability, the same convergence is true only in probability (which is enough for our purposes).} (see e.g.\ \citet{fp,cdf,wig-spk,nong-eigv1}). In fact it exhibits \emph{universality} (see e.g.\ \citet{TV-univ}): regardless of the choice of the noise distributions $\cP,\cP_d$ (with sufficiently many finite moments), many properties of the spectrum behave the same as if $\cP$ were a standard Gaussian distribution. In particular, for $\lambda \le 1$, the spectrum bulk has a semicircular distribution and the maximum eigenvalue converges almost surely to $2$. For $\lambda > 1$, an isolated eigenvalue emerges from the bulk with value converging to $\lambda + 1/\lambda$, and (under suitable assumptions) the top eigenvector has squared correlation $1 - 1/\lambda^2$ with the truth. 

In stark contrast we will show that from a statistical standpoint, universality breaks down entirely: the detection problem becomes easier when the noise is non-Gaussian. Let $\cX$ be a spike prior, and suppose that through the second moment method, we can establish contiguity between the \emph{Gaussian} spiked and unspiked models whenever $\lambda$ lies below some critical value
$$ \lambda^*_\cX \defeq \sup \left\{ \lambda \;\Big|\; \EE_{x,x' \sim \cX} \exp\left( \frac{n \lambda^2}{2} \langle x,x' \rangle^2\right) \text{ is bounded as $n \to \infty$} \right\}.$$
The detection threshold for the non-Gaussian Wigner model depends on $\lambda_\cX^*$ as well as a parameter $F_\cP$ (defined below) that depends on the noise distribution $\cP$.

\begin{theorem-nolabel}[informal; see Theorems~\ref{thm:nongauss-lower} and~\ref{thm:nong-upper}]
Under suitable conditions (see Assumptions~\ref{as:nong-lower} and~\ref{as:nong-upper}), the spiked model is contiguous to the unspiked model for all $\lambda < \lambda^*_\cX/\sqrt{F_\cP}$; but when $\lambda > 1/\sqrt{F_\cP}$, there exists an entrywise transformation $f$ such that the spiked and unspiked models can be consistently distinguished via the top eigenvalue of $f(\sqrt{n} Y)$.
\end{theorem-nolabel}
\noindent Recall that if we take the spike prior to be e.g.\ spherical or Rademacher, we have $\lambda^*_\cX = 1$, implying that our upper and lower bounds match, and thus our pre-transformed PCA procedure achieves the optimal threshold for \emph{any} noise distribution (subject to regularity assumptions). For reasons discussed later (see Appendix~\ref{app:nong-discrete}), we require $\cP$ to be a continuous distribution with a density function $p(w)$. The parameter $F_\cP$, which quantifies its difficulty, is the Fisher information of $\cP$ under translation:
$$ F_\cP = \Ex_{w \sim \cP}\left[\left(\frac{p'(w)}{p(w)}\right)^2\right] = \int_{-\infty}^\infty\frac{p'(w)^2}{p(w)} \,\dee w. $$
Gaussian noise enjoys an extremal value of this Fisher information, qualifying it as the unique hardest noise distribution (among a large class):

\begin{proposition}[\citet{pitman1979} p.\ 37]
\label{prop:F}
Let $\cP$ be a real distribution with a $C^1$, non-vanishing density function $p(w)$. Suppose $\mathrm{Var}[\cP] = 1$. Then $F_\cP \ge 1$, with equality if and only if $\cP$ is a standard Gaussian.
\end{proposition}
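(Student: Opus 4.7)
The plan is to prove this classical Cramér--Rao-type inequality via a Cauchy--Schwarz argument applied to the score function of the location family generated by $\cP$. Without loss of generality we may assume $F_\cP < \infty$ (otherwise the inequality is trivial), and by translation invariance of both $F_\cP$ and $\mathrm{Var}[\cP]$ we may assume $\Ex_{W\sim\cP}[W] = 0$. Define the score $\psi(w) = -p'(w)/p(w)$, so that $F_\cP = \Ex[\psi(W)^2]$.

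The first key identity is $\Ex[W\,\psi(W)] = 1$. Formally this comes from integration by parts:
\[
\Ex[W\,\psi(W)] = -\int_{-\infty}^\infty w\, p'(w)\,\dee w = \bigl[-wp(w)\bigr]_{-\infty}^\infty + \int_{-\infty}^\infty p(w)\,\dee w,
\]
so I need to justify that the boundary term vanishes. Finiteness of $\mathrm{Var}[\cP]$ together with $p \in C^1$ forces $\liminf_{|w|\to\infty} w\,p(w) = 0$, and combining this with the finiteness of $\int p'(w)\,\dee w$ (which itself follows, with some care, from $F_\cP < \infty$ and Cauchy--Schwarz applied against $p$) lets me upgrade the $\liminf$ to a genuine limit. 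This is the only genuinely delicate point in the proof; the rest is algebraic.

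Once the identity is in hand, Cauchy--Schwarz gives
\[
1 = \Ex[W\,\psi(W)]^2 \le \Ex[W^2]\,\Ex[\psi(W)^2] = \mathrm{Var}[\cP]\cdot F_\cP = F_\cP,
\]
which is the desired bound. For the equality case, Cauchy--Schwarz is tight iff $\psi(W) = cW$ almost surely for some constant $c$, i.e.\ $p'(w)/p(w) = -cw$ on the support of $\cP$. Since $p$ is non-vanishing, this ODE holds on all of $\RR$, forcing $p(w) \propto \exp(-cw^2/2)$. For $p$ to be integrable we need $c > 0$, and the normalization $\mathrm{Var}[\cP] = 1$ then pins down $c = 1$, i.e.\ $\cP = \cN(0,1)$. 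Conversely, a direct computation for the standard Gaussian gives $\psi(w) = w$ and thus $F_\cP = \Ex[W^2] = 1$, confirming equality.

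The main obstacle, as noted, is the rigorous justification of the boundary term vanishing in the integration by parts: the hypotheses ($C^1$, nonvanishing, unit variance) are minimalistic, so one must exploit $F_\cP < \infty$ to control the tails of $p$ quantitatively. In a fully written proof I would formalize this by writing $p(w) = p(0) + \int_0^w p'(t)\,\dee t$ and bounding $\int |p'|$ by $\sqrt{F_\cP}$ via Cauchy--Schwarz against $p$ itself, which gives enough integrability to push the argument through; alternatively, I would simply appeal to the standard form of the inequality as stated in Pitman's reference.
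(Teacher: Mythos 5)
The paper does not prove Proposition~\ref{prop:F}; it simply cites \citet{pitman1979}, p.~37, as a known instance of the Cram\'er--Rao inequality. Your Cauchy--Schwarz argument against the score function $\psi = -p'/p$ is exactly the standard proof of that result, so there is no conflict of approach to report---you are supplying the proof the paper deliberately outsources.

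One technical remark on your treatment of the boundary term. You observe that $\int |p'| \le \sqrt{F_\cP}$ by Cauchy--Schwarz against $p$, and that unit variance forces $\liminf_{|w|\to\infty} |w\,p(w)| = 0$, and you say combining these upgrades the liminf to a limit. That step as stated is not quite enough: finiteness of $\int |p'|$ gives $p(w)\to 0$ at $\pm\infty$, but to get that $w\,p(w)$ has a limit you want $\int_1^\infty |(w\,p(w))'|\,\dee w < \infty$, i.e.\ $\int |w\,p'(w)|\,\dee w < \infty$, which is the weighted version of the same estimate:
\[
\int |w\,p'(w)|\,\dee w = \int |w|\,\frac{|p'(w)|}{\sqrt{p(w)}}\,\sqrt{p(w)}\,\dee w \le \sqrt{\Ex[W^2]}\cdot\sqrt{F_\cP} < \infty.
\]
With that replacement the boundary term vanishes rigorously, the identity $\Ex[W\psi(W)] = 1$ holds, and the rest of your argument (Cauchy--Schwarz for the inequality, the ODE $p'/p = -cw$ with normalization pinning $c=1$ for the equality case) is correct as written. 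A cleaner packaging of the same fact: since $\int |w\,p'|\,\dee w < \infty$, the integral $\int_{-A}^A w\,p'(w)\,\dee w$ converges as $A\to\infty$, and integration by parts on $[-A,A]$ shows $A(p(A)+p(-A))$ converges; the liminf-zero observation then forces the limit to be zero without needing to track $w\,p(w)$ at the two endpoints separately.
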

\noindent This is effectively a form of the Cram\'er--Rao inequality, and can be exploited for a proof of the central limit theorem \citep{brown1982proof,barron1986entropy}.

Our upper bound proceeds by a pre-transformed PCA procedure. Define $f(w) = -p'(w)/p(w)$, where $p$ is the probability density function of the noise $\cP$. Given the observed matrix $Y$, we apply $f$ entrywise to $\sqrt{n} Y$, and examine the largest eigenvalue. This entrywise transformation approximately yields another spiked Wigner model, but with improved signal-to-noise ratio. One can derive the transformation $-p'(w)/p(w)$ by using calculus of variations to optimize the signal-to-noise ratio of this new spiked Wigner model. This phenomenon is illustrated in Figures~\ref{fig:spectrum} and~\ref{fig:transform}:

\begin{figure}[!ht]
    \centering
    \begin{minipage}{0.48\textwidth}
    \hspace*{-0.15in}\includegraphics[width=\linewidth]{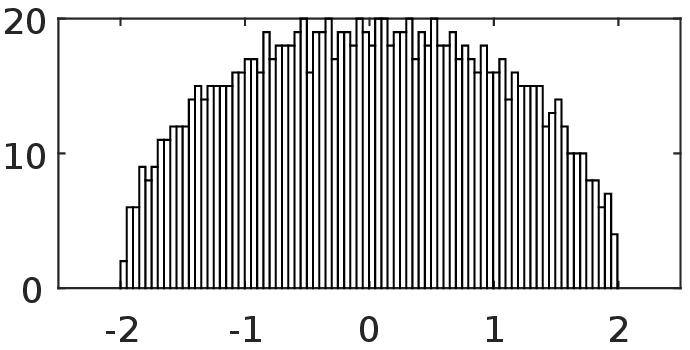}\\
    \hspace*{-0.15in}\includegraphics[width=\linewidth]{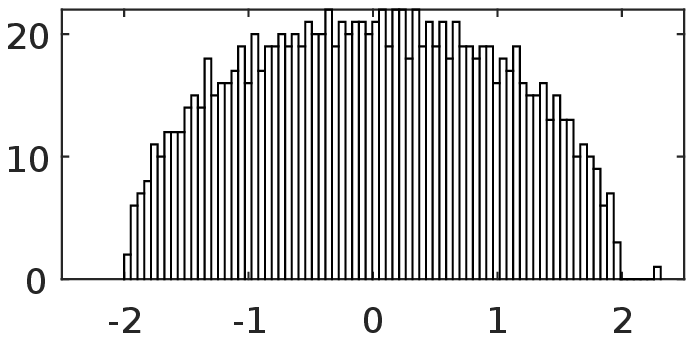}
    \captionof{figure}{Spectrum of a spiked Wigner matrix ($\lambda = 0.9$, $n=1200$) with bimodal noise, before (above) and after (below) the entrywise transformation. An isolated eigenvalue is evident only in the latter.
}
    \label{fig:spectrum}
    \end{minipage}\hfill%
    \begin{minipage}{0.48\textwidth}
    \vspace{.21in}\hspace{-0.2in}
    \includegraphics[width=\linewidth]{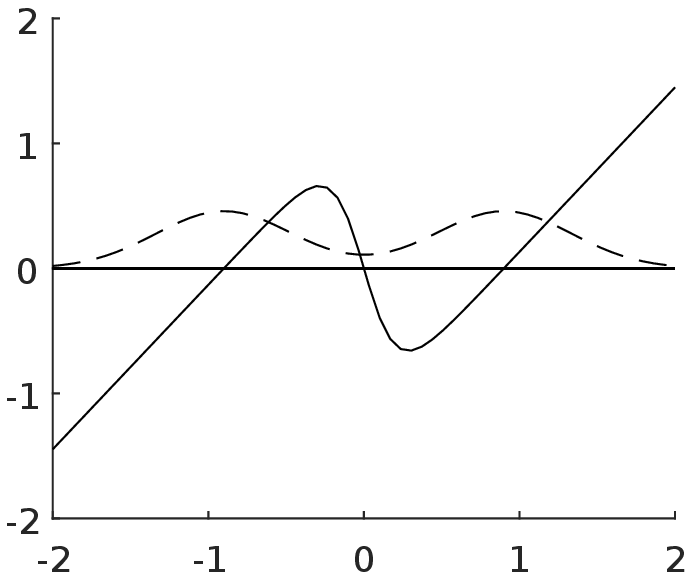}
    \captionof{figure}{The noise density $p$ (dashed) and entrywise transformation $-p'/p$ (solid). The bimodal noise is a convolution of Rademacher and Gaussian random variables.}
    \label{fig:transform}
    \end{minipage}%
\end{figure}

To intuitively understand why non-Gaussian noise makes the detection problem easier, consider the extreme case where the noise distributions $\cP$, $\cP_d$ are uniform on $\{\pm 1\}$, with mean $0$ and variance $1$. Since the noise contribution $\frac{1}{\sqrt n} W$ is entrywise exactly $\pm 1/\sqrt n$, it is very easy to detect and identify the small signal perturbation $\lambda xx^\top$, which is entrywise $O(1/n)$. If there is no spike, all the entries will be $\pm 1/\sqrt{n}$ (exactly). If there is a spike, each entry will be $\pm 1/\sqrt{n}$ plus a much smaller offset. One can therefore subtract off the noise and recover the signal exactly. In fact, if we let the noise be a smoothed version of $\{\pm 1\}$ (so that the derivative $p'$ exists), the entrywise transformation $-p'(w)/p(w)$ is precisely implementing this noise-subtraction procedure. This justifies the restriction to continuous noise distributions because any distribution with a point mass admits a similar trivial recovery procedure and we will not have contiguity for \emph{any} $\lambda > 0$; see Appendix~\ref{app:nong-discrete} for details.

The above results on non-Gaussian noise parallel a \emph{channel universality} phenomenon for mutual information, due to \citet{mi} (shown for finitely-supported \iid priors). The pre-transformed PCA procedure we use for our upper bound was previously suggested by \citet{LKZ-amp} based on linearizing an approximate message passing algorithm, but to our knowledge, no rigorous results have been previously established about its performance in general. Other entrywise pre-transformations have been shown to improve spectral approaches to various structured PCA problems \citep{DM-sparse-pca,kv}.

\subsection{Lower bound}
\label{sec:nong-symm}

In this section, we state our main statistical lower bound that establishes contiguity in the non-Gaussian Wigner setting. Given a noise distribution $\cP$, define the \emph{translation function}
$$ \tau(a,b) = \log \Ex_{\cP}\left[ \dd[T_a \cP]{\cP} \dd[T_b \cP]{\cP} \right] = \log \Ex_{z \sim \cP}\left[ \frac{p(z-a)}{p(z)} \frac{p(z-b)}{p(z)} \right], $$
where $T_a \cP$ denotes the translation of distribution $\cP$ by $a$. For instance, the translation function of standard Gaussian noise is $\tau(a,b) = ab$.

\begin{assumption}\label{as:nong-lower}
\begin{enumerate}[label=(\roman*)]
\item The prior $\cX$ satisfies (as usual) $\|x\| \to 1$ in probability, and furthermore $\cX$ is $(\sigma^2/n)$-subgaussian for some constant $\sigma^2$ (see Definition~\ref{def:subg}).
\item The prior $\cX$ satisfies high-probability norm bounds: for $q = 2,4,6,8$, there exists a constant $\alpha_q$ for which, with probability $1-o(1)$ over $x \sim \cX$, we have $\|x\|_q \leq \alpha_q n^{\frac1q - \frac12}$.
\item We assume the distributions $\cP,\cP_d$ have non-vanishing density functions $p(w),p_d(w)$, and translation functions $\tau,\tau_d$ that are $C^4$ in a neighborhood of $(0,0)$.
\end{enumerate}
\end{assumption}

\noindent Our main lower bound result is the following.

\begin{theorem}\label{thm:nongauss-lower}
Under Assumption~\ref{as:nong-lower}, $\Wig(\lambda,\cP,\cP_d,\cX)$ is contiguous to $\Wig(0,\cP,\cP_d)$ for all $\lambda < \lambda^*_\cX/\sqrt{F_\cP}$.
\end{theorem}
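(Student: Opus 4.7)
The plan is to apply the second moment method (Lemma~\ref{lem:sec}), or its conditional refinement (Lemma~\ref{lem:cond}). Because the entries of $W$ are independent up to symmetry, the likelihood ratio factorizes; after Jacobians cancel it reads
\[
\dd[Q_n]{P_n}(Y) = \Ex_{x \sim \cX_n}\!\left[\prod_{i<j}\frac{p\bigl(\sqrt{n}Y_{ij} - \sqrt{n}\lambda x_i x_j\bigr)}{p(\sqrt{n}Y_{ij})}\prod_{i}\frac{p_d\bigl(\sqrt{n}Y_{ii} - \sqrt{n}\lambda x_i^2\bigr)}{p_d(\sqrt{n}Y_{ii})}\right].
\]
Squaring, taking $\Ex_{P_n}$, and recognizing each entrywise integral against $\cP$ (respectively $\cP_d$) as $\exp(\tau(\cdot,\cdot))$ (respectively $\exp(\tau_d(\cdot,\cdot))$) yields
\[
\Ex_{P_n}\!\left(\dd[Q_n]{P_n}\right)^{\!2} = \Ex_{x,x' \sim \cX_n}\exp\!\left(\sum_{i<j}\tau\bigl(\sqrt{n}\lambda x_i x_j,\,\sqrt{n}\lambda x'_i x'_j\bigr) + \sum_{i}\tau_d\bigl(\sqrt{n}\lambda x_i^2,\,\sqrt{n}\lambda (x'_i)^2\bigr)\right),
\]
a clean generalization of Proposition~\ref{prop:2nd-comp} to non-Gaussian noise.

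The central analytic input is the local behavior of $\tau$ at the origin. A crucial observation is that $\tau(a,0) = \log\int p(z-a)\,\dee z = 0$ and likewise $\tau(0,b) = 0$, so the Taylor series of $\tau$ at $(0,0)$ contains \emph{only} mixed monomials $a^i b^j$ with $i, j \ge 1$. A short computation, legitimized by the $C^4$ hypothesis in Assumption~\ref{as:nong-lower}(iii), gives the mixed second derivative $\partial_a\partial_b\tau(0,0) = \Ex_{z \sim \cP}[(p'(z)/p(z))^2] = F_\cP$, so
\[
\tau(a,b) = F_\cP\,ab + R(a,b), \qquad |R(a,b)| \le C\,|ab|(|a|+|b|)
\]
in a neighborhood of the origin, with an analogous bound for $\tau_d$. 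Summing the leading term recovers the Gaussian main term,
\[
\sum_{i<j}F_\cP\bigl(\sqrt{n}\lambda x_ix_j\bigr)\bigl(\sqrt{n}\lambda x'_ix'_j\bigr) = \tfrac{F_\cP n\lambda^2}{2}\langle x,x'\rangle^2 - \tfrac{F_\cP n\lambda^2}{2}\sum_{i}x_i^2 (x'_i)^2,
\]
whose diagonal correction is $O(1)$ since $\sum_i x_i^2 (x'_i)^2 \le \|x\|_4^2\|x'\|_4^2 = O(1/n)$ by Assumption~\ref{as:nong-lower}(ii).

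The main obstacle is to control the cumulative Taylor remainders uniformly in $(x,x')$. I would first invoke the conditional second moment method: the subgaussianity of $\cX_n$ in Assumption~\ref{as:nong-lower}(i) yields $\|x\|_\infty = O(\sqrt{\log n / n})$ with probability $1-o(1)$, and restricting to this event ensures that all arguments $(\sqrt{n}\lambda x_ix_j, \sqrt{n}\lambda x'_ix'_j)$ lie in a shrinking neighborhood of $(0,0)$ where the Taylor expansion is uniformly valid. Then, using $|R(a,b)| \le C|ab|(|a|+|b|)$ together with H\"older-type inequalities in the $\ell_2, \ell_4, \ell_6, \ell_8$ norms controlled by Assumption~\ref{as:nong-lower}(ii), one shows that the cubic and higher-order Taylor contributions (including those of $\tau_d$) sum to $o(1)$; the vanishing of $\tau$ along the axes is structurally essential here, since it eliminates the pure-$a$ and pure-$b$ terms whose cumulative sums would otherwise be unmanageable under norm bounds alone. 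Bookkeeping these error terms is the bulk of the technical work. Collecting everything,
\[
\Ex_{P_n}\!\left(\dd[\tilde Q_n]{P_n}\right)^{\!2} \le (1+o(1))\,\Ex_{x,x' \sim \cX_n}\exp\!\left(\tfrac{F_\cP n\lambda^2}{2}\langle x,x'\rangle^2\right),
\]
which remains bounded whenever $\sqrt{F_\cP}\,\lambda < \lambda^*_\cX$ by the very definition of $\lambda^*_\cX$, yielding contiguity via Lemma~\ref{lem:cond}.
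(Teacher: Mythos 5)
Your proposal recovers the right likelihood ratio factorization, the exact second-moment formula, the crucial observation that $\tau(a,0)=\tau(0,b)=0$ (so the Taylor series at the origin contains only mixed terms), and the identification of $\partial_a\partial_b\tau(0,0)=F_\cP$. All of this matches the paper. But there is a genuine gap in the claim that ``the cubic and higher-order Taylor contributions \dots sum to $o(1)$'' via H\"older-type inequalities.

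The degree-$3$ contribution to the exponent is, up to constants,
\[
n^{3/2}\sum_{i<j} x_i^2 x_j^2\, x'_i x'_j \;\approx\; \tfrac{1}{2}\,n^{3/2}\,\langle x^2, x'\rangle^2,
\]
and a H\"older bound gives only $|\langle x^2, x'\rangle| \le \|x^2\|_2\,\|x'\|_2 = \|x\|_4^2\,\|x'\|_2 = O(n^{-1/2})$, so the cubic contribution to the exponent is $O(\sqrt{n})$ in the worst case --- not $o(1)$. The conditioning event $\|x\|_\infty = O(\sqrt{\log n/n})$ does not save you either: it bounds each summand by $O((\log n)^3 n^{-3/2})$, and summing $\binom{n}{2}$ such terms still gives $\tilde O(\sqrt n)$. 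Norm constraints alone cannot see the sign cancellation in $x'$, which is what actually makes this term small on average. The paper therefore first separates the Taylor orders by a weighted AM--GM inequality (so each order lives inside its own expectation), and then bounds the degree-$3$ expectation by writing it as $\EE_{x'}\exp(\langle v,x'\rangle^2)$ with $\|v\|_2^2 = O(\sqrt n)$ and invoking the $(\sigma^2/n)$-subgaussianity of the prior (Assumption (i)): since $\sigma^2\|v\|^2/n \to 0$, this inner expectation is $O(1)$. Only the degree-$4$ and diagonal terms are controllable by H\"older/Cauchy--Schwarz and the $\ell_q$ norm bounds, as your argument would have it. So your overall strategy and algebraic reductions are correct, but the cubic remainder is precisely where the proof is delicate, and the subgaussian hypothesis is not optional bookkeeping there --- it is the mechanism that closes the $\Theta(\sqrt n)$ gap. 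You would need to add the AM--GM separation and the subgaussian estimate for the degree-$3$ piece to make the proof go through.
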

\noindent We defer the proof to Appendix~\ref{app:nong-lower}. In Appendix~\ref{app:nong-lower} we also show that the assumptions on $\cX$ are satisfied for the spherical prior and for reasonable \iid priors; see Propositions~\ref{prop:nong-lower-spherical} and \ref{prop:nong-lower-iid} below. The assumptions on $\cP,\cP_d$ are satisfied by any mixture of Gaussians of positive variance, for example.

\begin{proposition}\label{prop:nong-lower-spherical}
Conditions (i) and (ii) in Assumption~\ref{as:nong-lower} are satisfied for the spherical prior $\cXs$.
\end{proposition}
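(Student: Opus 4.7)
The plan is to dispatch both parts of Assumption~\ref{as:nong-lower} using only classical facts about the uniform measure on the sphere. The normalization $\|x\| \to 1$ is trivial since $\|x\| = 1$ identically under $\cXs$, so the real work reduces to (a) a subgaussian bound on $\langle v, x\rangle$ and (b) concentration of $\|x\|_q^q$ for $q \in \{2,4,6,8\}$.

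For part (i), by rotational invariance of $\cXs$ the moment generating function $\EE \exp(\langle v, x\rangle)$ depends only on $\|v\|$, and $\EE \langle v, x\rangle = 0$. The map $x \mapsto \langle v, x\rangle$ is $\|v\|$-Lipschitz on $S^{n-1}$ in the Euclidean metric, so L\'evy's concentration inequality on the sphere gives
$$\problr{|\langle v,x\rangle| \geq t} \leq 2\exp\left(-\frac{c n t^2}{\|v\|^2}\right)$$
for some universal $c > 0$. The standard tail-to-MGF conversion for centred subgaussian variables then upgrades this to $\EE \exp(\langle v, x\rangle) \leq \exp(\sigma^2 \|v\|^2/(2n))$ for a universal $\sigma^2$, which is $(\sigma^2/n)$-subgaussianity in the sense of Definition~\ref{def:subg}.

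For part (ii), the case $q=2$ is immediate with $\alpha_2 = 1$. For $q \in \{4,6,8\}$, I would use the Gaussian representation $x \stackrel{d}{=} g/\|g\|$ with $g \sim \cN(0, I_n)$, so that $\|x\|_q^q = \|g\|_q^q/\|g\|_2^q$. The numerator is a sum of $n$ independent copies of $|g_1|^q$ with finite mean $c_q := \EE|g_1|^q$ and finite variance, so by the weak law of large numbers $\|g\|_q^q = c_q n(1+o_{\mathbb{P}}(1))$ and similarly $\|g\|_2^2 = n(1+o_{\mathbb{P}}(1))$. Combining these yields $\|x\|_q = c_q^{1/q} n^{1/q - 1/2}(1+o_{\mathbb{P}}(1))$, and taking $\alpha_q = 2 c_q^{1/q}$ delivers the required high-probability bound.

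Both ingredients---L\'evy concentration on the sphere and the Gaussian representation of the uniform measure---are entirely classical, so I anticipate no substantive obstacle. The only minor technicality is the passage from the L\'evy tail bound to an MGF bound with the correct variance proxy $\sigma^2/n$; this is a routine calculation, though one could alternatively invoke the log-Sobolev inequality on $S^{n-1}$ via Herbst's argument to produce the MGF bound directly.
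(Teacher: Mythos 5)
Your proposal is correct, and part (ii) is essentially the paper's proof: both pass to the Gaussian representation $x \stackrel{d}{=} g/\|g\|_2$ with $g\sim\cN(0,I_n)$ and then deduce concentration of $\|g\|_q^q$ around $n\,\EE|g_1|^q$; the paper phrases this as an explicit Chebyshev bound, you phrase it as the $L^2$ weak law of large numbers, but these are the same calculation. For part (i), however, you take a genuinely different route: the paper observes that $\langle x,v\rangle$ (for a unit $v$) is $2z-1$ with $z\sim\mathrm{Beta}$, and then cites a known $O(1/n)$-subgaussianity estimate for such Beta variables, whereas you invoke L\'evy's concentration-of-measure inequality on $S^{n-1}$ for the $\|v\|$-Lipschitz function $x\mapsto\langle v,x\rangle$ (with median zero by symmetry) and then upgrade the Gaussian tail bound to an MGF bound via the standard tail-to-MGF conversion (or Herbst). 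Both arguments are classical and equally short; the Beta route is more ``computational'' and gives an explicit constant, while your isoperimetric route is more robust and would apply verbatim to any Lipschitz functional, but at the cost of a non-explicit universal constant absorbed into $\sigma^2$ (which is all the assumption requires). There is no gap in your argument.
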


\begin{proposition}\label{prop:nong-lower-iid}
Consider an \iid prior $\cX = \IID(\pi/\sqrt{n})$ where $\pi$ is zero-mean, unit-variance,
and subgaussian with some constant $\sigma^2$. Then conditions (i) and (ii) in Assumption~\ref{as:nong-lower} are satisfied.
\end{proposition}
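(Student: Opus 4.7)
The plan is to verify both conditions by direct computation, exploiting the product structure of the \iid prior together with the moment bounds that follow from subgaussianity of $\pi$.

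For the subgaussianity part of condition (i), I would use independence of the coordinates. For any $v \in \RR^n$,
\begin{align*}
\Ex_{x \sim \cX} \exp(\langle v, x \rangle)
&= \prod_{i=1}^n \Ex_{\pi_i \sim \pi} \exp\!\bigl(v_i \pi_i / \sqrt{n}\bigr)
\le \prod_{i=1}^n \exp\!\bigl(\sigma^2 v_i^2 / (2n)\bigr)
= \exp\!\bigl(\sigma^2 \|v\|^2 / (2n)\bigr),
\end{align*}
where the inequality uses that $\pi$ (and hence $\pi/\sqrt n$ scaled by a constant) is $\sigma^2$-subgaussian. This shows $\cX_n$ is $(\sigma^2/n)$-subgaussian. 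The normalization $\|x\| \to 1$ in probability is simply the weak law of large numbers applied to $\|x\|^2 = \frac{1}{n}\sum_{i=1}^n \pi_i^2$, since $\Ex[\pi^2] = 1$ and $\pi$ has all moments finite (being subgaussian).

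For condition (ii), note that for each fixed $q \in \{2,4,6,8\}$, we have $\|x\|_q^q = n^{-q/2} \sum_{i=1}^n |\pi_i|^q$. Because $\pi$ is $\sigma^2$-subgaussian, the moment $m_q \defeq \Ex|\pi|^q$ is finite (bounded in terms of $\sigma$ and $q$ alone). The weak law of large numbers then gives $\frac{1}{n}\sum_{i=1}^n |\pi_i|^q \to m_q$ in probability, so with probability $1-o(1)$ this average is bounded by $2 m_q$. Rearranging yields
\[ \|x\|_q \le (2 m_q)^{1/q} \, n^{1/q - 1/2}, \]
so setting $\alpha_q = (2 m_q)^{1/q}$ suffices.

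There is no real obstacle here since everything reduces to standard concentration of \iid sums; the only small care is to ensure that the subgaussian bound of $\pi$ transfers to the scaled vector $\pi/\sqrt n$ with exactly the right constant $\sigma^2/n$, and that $m_q < \infty$ for the finitely many relevant $q$, both of which are immediate from the subgaussian moment bound $\Ex|\pi|^q \le C_q \sigma^q$. No conditioning argument is required for this proposition, since the conditions themselves are only ``$(1-o(1))$-probability'' statements rather than almost sure bounds.
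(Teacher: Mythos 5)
Your proof is correct and takes essentially the same approach as the paper: condition (ii) follows from concentration of the \iid sum $\sum_i |\pi_i|^q$ using finite moments supplied by subgaussianity (you invoke the weak law of large numbers where the paper uses Chebyshev's inequality; both are fine). You also explicitly verify condition (i), which the paper's proof treats as immediate and omits; your direct product-MGF computation for the multivariate subgaussian bound and the WLLN argument for $\|x\|\to 1$ are both standard and correct.
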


\subsection{Pre-transformed PCA}
\label{sec:nong-wig-upper}

In this section we analyze a pre-transformed PCA procedure for the non-Gaussian spiked Wigner model. We need the following regularity assumptions.

\begin{assumption}
\label{as:nong-upper}
Of the prior $\cX$ we require (as usual) $\|x\| \to 1$ in probability, and we also assume that with probability $1-o(1)$, all entries of $x$ are small: $|x_i| \le n^{-1/2 + \alpha}$ for some fixed $\alpha < 1/8$. Of the noise $\cP$, we assume the following:
\begin{enumerate}[label=(\roman*),resume]
\item $\cP$ has a non-vanishing $C^3$ density function $p(w) > 0$,
\item Letting $f(w) = -p'(w)/p(w)$, we have that $f$ and its first two derivatives are polynomially-bounded: there exists $C > 0$ and an even integer $m \ge 2$ such that $|f^{(\ell)}(w)| \le C + w^m$ for all $0 \le \ell \le 2$.
\item With $m$ as in (ii), $\cP$ has finite moments up to $5m$:
$\EE|\cP|^k < \infty$ for all $1 \le k \le 5m$. 
\end{enumerate}
\end{assumption}

The main theorem of this section is the following.
\begin{theorem}\label{thm:nong-upper}
Let $\lambda \ge 0$ and let $\cX,\cP$ satisfy Assumption~\ref{as:nong-upper}. Let $\hat Y = \sqrt{n}\, Y$ where $Y$ is drawn from $\Wig(\lambda,\cP,\cP_d,\cX)$. Let $f(\hat Y)$ denote entrywise application of the function $f(w) = -p'(w)/p(w)$ to $\hat Y$, except we define the diagonal entries of $f(\hat Y)$ to be zero.
\begin{itemize}[leftmargin=*]
\item If $\lambda \le 1/\sqrt{F_\cP}$ then $\frac{1}{\sqrt n}\lambda_{\max}(f(\hat Y)) \to 2\sqrt{F_\cP}$ as $n \to \infty$.
\item If $\lambda > 1/\sqrt{F_\cP}$ then $\frac{1}{\sqrt n}\lambda_{\max}(f(\hat Y)) \to \lambda F_\cP + \frac{1}{\lambda} > 2 \sqrt{F_\cP}$ as $n \to \infty$ and furthermore the top (unit-norm) eigenvector $v$ of $f(\hat Y)$ correlates with the spike:
$\langle v,x \rangle^2 \ge (\lambda - 1/\sqrt{F_\cP})^2/\lambda^2 - o(1)$ with probability $1-o(1)$.
\end{itemize}
Convergence is in probability. Here $\lambda_{\max}(\cdot)$ denotes the maximum eigenvalue.
\end{theorem}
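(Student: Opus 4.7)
The plan is to carry out a second-order Taylor expansion of $f$ around each noise entry, identify the leading pieces as a rescaled spiked Wigner model, and then invoke the classical BBP phase transition. Off-diagonally we write
\[
f(\hat Y_{ij}) = f(W_{ij}) + f'(W_{ij})\sqrt{n}\lambda x_i x_j + \tfrac{1}{2}f''(\xi_{ij})\, n \lambda^2 x_i^2 x_j^2
\]
for some $\xi_{ij}$ between $W_{ij}$ and $\hat Y_{ij}$, which is legitimate since $f\in C^2$ by Assumption~\ref{as:nong-upper}. Dividing by $\sqrt n$, the target decomposition of $M := n^{-1/2} f(\hat Y)$ has three pieces: the rank-one signal $F_\cP\lambda\, xx^\top$, a symmetric Wigner-type noise matrix $n^{-1/2}f(W)$ (with zero diagonal), and error matrices that I want to push to $o(1)$ in operator norm.

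Three integration-by-parts identities, valid under the non-vanishing $C^3$ density assumption, pin down the leading behaviour: $\EE[f(W)] = -\int p'(w)\,dw = 0$; $\EE[f(W)^2] = F_\cP$ by definition; and $\EE[f'(W)] = \int(p'/p)^2 p - \int p'' = F_\cP$. Consequently $n^{-1/2} f(W)$ has iid (up to symmetry) mean-zero, variance-$F_\cP$ off-diagonal entries with enough finite moments thanks to $|f^{(\ell)}(w)|\le C + w^m$ and the $5m$-moment assumption on $\cP$; Bai--Yin-type results then give $\|n^{-1/2}f(W)\|\to 2\sqrt{F_\cP}$ in probability. Adding the rank-one perturbation $F_\cP\lambda\, xx^\top$ and applying the BBP/deformed-Wigner transition for noise variance $F_\cP$ produces exactly the stated threshold: the top eigenvalue converges to $2\sqrt{F_\cP}$ when $F_\cP\lambda \le \sqrt{F_\cP}$ (i.e.\ $\lambda \le 1/\sqrt{F_\cP}$) and otherwise to $F_\cP\lambda + F_\cP/(F_\cP\lambda) = \lambda F_\cP + 1/\lambda$.

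The technical core is to show the remaining terms are negligible in operator norm. The linear fluctuation is $\lambda\,\mathrm{diag}(x)(f'(W) - F_\cP)\mathrm{diag}(x)$, so its operator norm is at most $\lambda\|x\|_\infty^2\|f'(W) - F_\cP\| = O(n^{-1 + 2\alpha})\cdot O(\sqrt n) = O(n^{-1/2 + 2\alpha}) = o(1)$. The zero-diagonalisation removes $F_\cP\lambda\,\mathrm{diag}(x_i^2)$, of norm $\le F_\cP\lambda\|x\|_\infty^2 = o(1)$. For the quadratic Taylor remainder I split $f''(\xi_{ij}) = \mu + (f''(\xi_{ij}) - \mu)$ with $\mu := \EE[f''(W)]$: the mean contribution is the rank-one matrix with entries $\tfrac{\sqrt n}{2}\mu\lambda^2 x_i^2 x_j^2$ of operator norm $O(\sqrt n \cdot \|x\|_4^4) = O(n^{-1/2 + 4\alpha})$, which is $o(1)$ precisely when $\alpha < 1/8$ — this is where the assumption pinches. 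The fluctuation part I would bound in Frobenius norm using $|f''(\xi_{ij})| \le C + (|W_{ij}| + \sqrt n \lambda|x_i x_j|)^m$ together with the moment hypothesis on $\cP$.

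The eigenvector bound follows from a short variational argument rather than a sharper resolvent or Davis--Kahan estimate (which is why the stated inequality is weaker than the true limit $1 - 1/(F_\cP\lambda^2)$). Writing $M = F_\cP\lambda\, xx^\top + N$ with $\|N\| \to 2\sqrt{F_\cP}$ and using $v^\top M v = \lambda_{\max}(M) \to \lambda F_\cP + 1/\lambda$ for the top unit eigenvector $v$, the split $v^\top M v = F_\cP\lambda \langle v,x\rangle^2 + v^\top N v$ yields $\langle v,x\rangle^2 \ge (\lambda F_\cP + 1/\lambda - 2\sqrt{F_\cP})/(F_\cP\lambda) - o(1)$, which simplifies to $((\lambda - 1/\sqrt{F_\cP})/\lambda)^2 - o(1)$. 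I expect the hard part to be the operator-norm control of the quadratic Taylor remainder: the intermediate point $\xi_{ij}$ depends jointly on the noise and the spike, so $[f''(\xi_{ij}) x_i^2 x_j^2]_{ij}$ is neither a clean Wigner nor a clean rank-one object, and the mean/fluctuation split together with the $\ell_4$ decay of $x$ is what makes the argument succeed at exactly the $\alpha < 1/8$ threshold.
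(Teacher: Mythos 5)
Your proposal is correct and follows essentially the same route as the paper's proof in Appendix~\ref{app:nong-upper}: mean-value Taylor expansion about $W_{ij}$, the integration-by-parts identities $\EE f(W)=0$, $\EE f(W)^2 = \EE f'(W) = F_\cP$, operator-norm control of the linear fluctuation via $\mathrm{diag}(x)\,A\,\mathrm{diag}(x)$ with $A$ a Wigner matrix, Frobenius control of the quadratic remainder, diagonal removal, and the same variational bound for the eigenvector. The one small difference is that you further split the quadratic remainder into a mean part (rank-one, bounded in operator norm) and a centered fluctuation; the paper treats the whole remainder at once in Frobenius norm, which is what forces $\alpha<1/8$ — your finer split (with the sharper $\|x\|_4^4=O(n^{-1+2\alpha})$) would actually get away with $\alpha<1/4$, so the pinch point you identify is an artifact of the looser $\|x\|_4^4\le n\|x\|_\infty^4$ bound rather than of your decomposition.
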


\noindent The proof is deferred to Appendix~\ref{app:nong-upper}, but the main idea is that the entrywise transformation $f$ approximately produces another spiked (non-Gaussian) Wigner matrix with a different signal-to-noise ratio $\lambda$, and we can choose $f$ to optimize this.

We have set the diagonal entries to zero for convenience, but this is not essential: so long as we define the diagonals of $f(\hat Y)$ so that the largest (in absolute value) diagonal entry is $o(\sqrt{n})$, the diagonal entries can only change the spectral norm of $f(\hat Y)$ by $o(\sqrt{n})$ and so the result still holds.


\section{Spiked Wishart models}
\label{sec:wish}

\subsection{Main results}

We first formally define the spiked Wishart model:
\begin{definition}\label{def:spiked-wishart}
Let $\gamma > 0$ and $\beta \in [-1,\infty)$. Let $\cX = \{\cX_n\}$ be a spike prior. The spiked (Gaussian) Wishart model $\Wish(\gamma,\beta,\cX)$ on $n \times n$ matrices is defined as follows: we first draw a hidden spike $x \sim \cX_n$, and then reveal $Y = \frac{1}{N} X X^\top$, where $X$ is an $n \times N$ matrix whose columns are sampled independently from $\cN(0,I+\beta x x^\top)$; the parameters $N$ and $n$ scale proportionally with $n/N \to \gamma$ as $n \to \infty$. If $\beta < 0$ and $|\beta| \cdot \|x\|^2 > 1$ (so that the covariance matrix is not positive semidefinite), output a failure event $\perp$.
\end{definition}

\noindent Recall that spike priors are required to satisfy $\|x\| \to 1$ in probability (Definition~\ref{def:prior}). Our contiguity results will apply even to the case when the sample matrix $X$ is revealed.

The spiked Wishart model admits the following spectral behavior. In this high-dimensional setting, the spectrum bulk of $Y$ converges
to the Marchenko--Pastur distribution with shape parameter $\gamma$. By results of \citet{bbp} and \citet{baik-silverstein}, it is known that the top eigenvalue consistently distinguishes the spiked and unspiked models when $\beta > \sqrt{\gamma}$. In fact, matching lower bounds are known in the absence of a prior (equivalently, for the spherical prior) due to \citet{sphericity}: for $0 \leq \beta < \sqrt{\gamma}$, no hypothesis test distinguishes this spiked model from the unspiked model with $o(1)$ error. In the case of $-1 \leq \beta < 0$, a corresponding PCA threshold exists: the minimum eigenvalue exits the bulk when $\beta < -\sqrt{\gamma}$ \citep{baik-silverstein}, but we are not aware of lower bounds in the literature. The case of $\beta < -1$ is of course invalid, as the covariance matrix must be positive semidefinite. As in the Wigner model, consistent detection is possible in the critical case $|\beta| = \sqrt{\gamma}$, at least when $\beta > 0$; see Appendix~\ref{app:critical}.

Our goal in this section will be to give lower and upper bounds on the statistical threshold for $\gamma$ (as a function of $\beta$) for various priors on the spike. We begin with a crude lower bound that allows us to transfer any lower bound for the Gaussian Wigner model into a lower bound for the Wishart model. Recall that $\lambda^*_\cX$ denotes the threshold for boundedness of the Gaussian \emph{Wigner} second moment:
\begin{equation}
\label{eq:lambdastar}
\lambda^*_{\cX} \defeq \sup \left\{ \lambda \;\Big|\; \Ex_{x,x' \sim \cX} \exp\left( \frac{n \lambda^2}{2} \langle x,x' \rangle^2\right) \text{ is bounded as $n \to \infty$} \right\}.
\end{equation}
\begin{proposition}
\label{prop:wigner-wishart-bound}
Let $\cX$ be a spike prior. If $\beta^2 < 1-e^{-\gamma (\lambda^*_\cX)^2}$ then $\Wish(\gamma,\beta,\cX)$ is contiguous to $\Wish(\gamma)$.
\end{proposition}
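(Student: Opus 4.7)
The plan is to reduce the Wishart contiguity bound to the Gaussian Wigner second moment bound by computing the Wishart second moment explicitly and then dominating it, pointwise in $\langle x,x'\rangle$, by a Gaussian Wigner second moment at an admissible SNR.

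First I would observe that revealing the sample matrix $X$ only makes testing easier, so it suffices to prove contiguity of the joint laws of $X$, whose null is $\cN(0,I)^{\otimes N}$. Using the matrix determinant lemma and Sherman--Morrison, the single-sample likelihood ratio is
\[
\frac{\phi_{I+\beta xx^\top}(y)}{\phi_I(y)} \;=\; (1+\beta\|x\|^2)^{-1/2}\exp\!\left(\tfrac{\beta}{2(1+\beta\|x\|^2)}(y^\top x)^2\right),
\]
so integrating $x$ against $\cX$ and squaring gives (via $\Ex_{Z\sim \cN(0,I)}\exp(\tfrac12 Z^\top M Z)=\det(I-M)^{-1/2}$ applied column-by-column) a quantity whose column contribution is $\det(I-M)^{-1/2}$ for $M=axx^\top+bx'x'^\top$ with $a=\beta/(1+\beta\|x\|^2)$, $b=\beta/(1+\beta\|x'\|^2)$. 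The rank-$2$ determinant factorizes as $\det(I-M)=(1-\beta^2\langle x,x'\rangle^2)/[(1+\beta\|x\|^2)(1+\beta\|x'\|^2)]$, and after multiplying by the prefactor $(1+\beta\|x\|^2)^{-N/2}(1+\beta\|x'\|^2)^{-N/2}$ the norm terms cancel, yielding the clean identity
\[
\Ex_{P_n}\!\left(\dd[Q_n]{P_n}\right)^2 \;=\; \Ex_{x,x'\sim\cX}\bigl(1-\beta^2\langle x,x'\rangle^2\bigr)^{-N/2}.
\]

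Next I would apply the conditional second moment method (Lemma~\ref{lem:cond}), conditioning on the $(1-o(1))$-probability event $\|x\|\le 1+\delta$ for a small $\delta>0$ (also ensuring $|\beta|(1+\delta)^2<1$ so the model is well-defined when $\beta<0$). Under this conditioning, Cauchy--Schwarz gives $\langle x,x'\rangle^2\le(1+\delta)^4$. The key pointwise estimate is the convexity inequality: since $h(t)=-\log(1-\beta^2 t)$ is convex with $h(0)=0$, the ratio $h(t)/t$ is non-decreasing, so
\[
-\log\!\bigl(1-\beta^2 s^2\bigr) \;\le\; \frac{-\log\!\bigl(1-\beta^2(1+\delta)^4\bigr)}{(1+\delta)^4}\, s^2 \qquad\text{for all }s^2\le(1+\delta)^4.
\]
Multiplying by $N/2$ and exponentiating yields $(1-\beta^2\langle x,x'\rangle^2)^{-N/2}\le\exp\bigl(\tfrac{n\lambda_n^2}{2}\langle x,x'\rangle^2\bigr)$ where $\lambda_n^2 := \frac{N}{n}\cdot\frac{-\log(1-\beta^2(1+\delta)^4)}{(1+\delta)^4}\to \frac{-\log(1-\beta^2(1+\delta)^4)}{\gamma(1+\delta)^4}$ as $n\to\infty$.

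Finally I would use the hypothesis $\beta^2<1-e^{-\gamma(\lambda^*_\cX)^2}$, equivalently $\gamma^{-1}\log\frac{1}{1-\beta^2}<(\lambda^*_\cX)^2$, to pick $\delta$ small enough that $\limsup_n \lambda_n < \lambda^*_\cX$. Then for all large $n$, the conditional Wishart second moment is at most $\Ex_{x,x'\sim\tilde\cX}\exp(\tfrac{n\lambda^2}{2}\langle x,x'\rangle^2)$ with $\lambda<\lambda^*_\cX$; and since conditioning only inflates the Wigner-style moment by a factor $(1+o(1))/\Pr(\omega)^2 = 1+o(1)$, this is bounded by definition of $\lambda^*_\cX$. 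Applying Lemma~\ref{lem:cond} concludes the proof.

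The main obstacle is finding the right pointwise bound: the naive series estimate $-\log(1-u)\le u/(1-u)$ loses a factor and would give the weaker condition $\beta^2<\gamma(\lambda^*_\cX)^2/(1+\gamma(\lambda^*_\cX)^2)$. The secant-slope/convexity bound above is essentially tight at $s^2=(1+\delta)^4$ and recovers the stated constant $1-e^{-\gamma(\lambda^*_\cX)^2}$ in the $\delta\to 0$ limit. A minor care-point is handling $\beta<0$: the conditioning $\|x\|\le 1+\delta$ keeps $|\beta|\|x\|^2<1$, so the Wishart likelihood ratio remains well-defined throughout.
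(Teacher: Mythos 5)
Your proposal is correct and follows essentially the same route as the paper: compute the Wishart second moment $\Ex_{x,x'}(1-\beta^2\langle x,x'\rangle^2)^{-N/2}$, condition on $\|x\|$ close to $1$, apply the secant-slope (convexity) bound $-\log(1-\beta^2 t)\le \tfrac{-\log(1-\beta^2 t^*)}{t^*}\,t$ for $t\le t^*$ to dominate the expression by a Gaussian Wigner second moment at an SNR just below $\lambda^*_\cX$, and close with the conditional second moment method. The only cosmetic deviations (rank-$2$ matrix determinant lemma in place of the paper's $\chi^2_1$-MGF step, and $\|x\|\le 1+\delta$ versus $\|x\|^2\le 1+\delta$) do not change the argument.
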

\noindent The proof can be found in Section~\ref{sec:wish-small-dev}. A consequence of the above is that if $\lambda^*_\cX = 1$, so that the spectral method is optimal in the Wigner setting, it follows that the ratio between the above Wishart lower bound ($1-e^{-\gamma (\lambda^*_\cX)^2}$) and the spectral upper bound ($\gamma$) tends to $1$ as $\gamma \to 0$. This reflects the fact that the Wigner model is a particular $\gamma \to 0$ limit of the Wishart model \citep{jo-testing-spiked}. For $\beta > 0$, we will later give an even stronger implication from Wigner to Wishart lower bounds (Corollary~\ref{cor:wig-pos-wish}).

Although Proposition~\ref{prop:wigner-wishart-bound} is a strong bound for small $\gamma$, it is rather weak for large $\gamma$ (and in particular does not cover the case $\beta \ge 1$). In Section~\ref{sec:wish-main-lower} we will remedy this by giving a much tighter lower bound (Theorem~\ref{thm:wish-nc}) which depends on the rate function of the large deviations of the prior. The proof involves an application of the conditional second moment method whereby we condition away from certain `bad' events depending on interactions between the signal and noise (similarly to \citet{noise-cond}). One consequence (Corollary~\ref{cor:wig-pos-wish}) of our lower bound roughly states that if detection is impossible below the spectral threshold ($\lambda = 1$) in the \emph{Wigner} model, then it is also impossible below the spectral threshold ($|\beta| = \sqrt{\gamma}$) in the Wishart model for all positive $\beta$. (This is not true for negative $\beta$.)

We complement our lower bounds with the following upper bound.
\begin{theorem}\label{thm:wishart-mle}
Let $\beta \in (-1,\infty)$. Let $\cX_n$ be a spike prior supported on at most $c^n$ points, for some fixed $c > 0$. If
$$ 2\gamma \log c < \beta - \log(1+\beta)$$
then there is a (inefficient) test that consistently distinguishes between the spiked Wishart model $\Wish(\gamma,\beta,\cX)$ and the unspiked model $\Wish(\gamma)$.
\end{theorem}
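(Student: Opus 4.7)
The plan is to analyze a generalized likelihood ratio test (GLRT) that maximizes the log-likelihood over all candidates in the support of the prior. Assuming first that $\|x\|=1$ exactly, a standard computation using $\Sigma^{-1} = I - \tfrac{\beta}{1+\beta}xx^\top$ and $\log\det\Sigma = \log(1+\beta)$ shows that the per-sample log-likelihood ratio is an affine function of $x^\top Y x$, so
\[
L(x) \;=\; \frac{N\beta}{2(1+\beta)}\, x^\top Y x \;-\; \frac{N}{2}\log(1+\beta).
\]
Thus the GLRT reduces to thresholding $\max_{x \in \mathrm{supp}(\cX_n)} x^\top Y x$ when $\beta>0$ and $\min_{x} x^\top Y x$ when $\beta<0$. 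I would pick a threshold $a=1+\beta-\mathrm{sgn}(\beta)\,\delta$ for arbitrarily small $\delta>0$.

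The key step is a union bound driven by the Cram\'er rate function for $\chi^2$. Under the unspiked null $Y\sim \Wish(\gamma)$, for any fixed unit vector $x$ we have $N x^\top Y x \sim \chi^2_N$, whose rate function is $I(a)=\tfrac12(a-1-\log a)$, valid on both tails. Applying the Chernoff bound,
\[
\Pr_{\mathrm{null}}\!\bigl[x^\top Yx \text{ deviates to } 1+\beta-\mathrm{sgn}(\beta)\delta\bigr] \;\le\; \exp\!\Bigl(-\tfrac{N}{2}\bigl((\beta-\mathrm{sgn}(\beta)\delta)-\log(1+\beta-\mathrm{sgn}(\beta)\delta)\bigr)\Bigr).
\]
Taking a union bound over the $\le c^n$ points in $\mathrm{supp}(\cX_n)$ and using $N=n/\gamma+o(n)$, the null probability of raising a false alarm goes to $0$ provided $2\gamma \log c < (\beta-\mathrm{sgn}(\beta)\delta)-\log(1+\beta-\mathrm{sgn}(\beta)\delta)$. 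Since the right-hand side is continuous in $\delta$ and converges to $\beta-\log(1+\beta)$ as $\delta\to 0$, the hypothesis of the theorem gives us some admissible $\delta$.

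Under the spiked alternative with true spike $x^\star$, the quadratic form $N(x^\star)^\top Y x^\star$ is exactly distributed as $(1+\beta)\chi^2_N$, so $(x^\star)^\top Y x^\star \to 1+\beta$ in probability. Hence the extremum over the support exceeds (resp.\ falls below) the threshold with probability $1-o(1)$, completing the argument.

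The main technical nuisance, not a genuine obstacle, is that the prior only requires $\|x\|\to 1$ in probability rather than $\|x\|=1$ exactly. I would handle this by conditioning on the high-probability event $\|x\|\in[1-\eta,1+\eta]$, which simply perturbs the effective signal strength by a factor $1+O(\eta)$ and rescales the coefficient of $x^\top Yx$ correspondingly; since the boundary condition $2\gamma\log c<\beta-\log(1+\beta)$ is strict, we may choose $\eta$ small enough so that neither the threshold calibration nor the union bound is affected at leading exponential order. For $\beta<0$ one must also verify that the failure event $\perp$ (when $|\beta|\|x\|^2>1$) occurs with vanishing probability on the conditioned prior, which follows from $|\beta|<1$ together with the norm bound.
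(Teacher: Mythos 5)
Your proposal is correct and follows essentially the same approach as the paper: a generalized-likelihood-ratio/MLE test reducing to thresholding $\max$ (resp.\ $\min$) of $x^\top Y x$ over the support, controlled by a union bound against the $\chi^2$ Chernoff rate $\tfrac12(a-1-\log a)$, with a small constant $\delta$ (the paper's $\eps$) absorbed via continuity of the strict inequality. The only cosmetic difference is that the paper normalizes the statistic as $v^\top Y v/\|v\|^2$, which makes the null distribution exactly $\chi^2_N/N$ without needing to condition on the norm; your conditioning-on-$\|x\|\in[1-\eta,1+\eta]$ workaround is equivalent in effect.
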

\noindent The test that gives this upper bound is based on the maximum likelihood estimator (MLE), computed by exhaustive search over all possible spikes. The proof, which can be found in Appendix~\ref{app:wishart-mle}, is a simple application of the Chernoff bound and the union bound. For some priors (such as \iid sparse Rademacher) we can get the most mileage out of this theorem by first conditioning on a $(1-o(1))$-probability event (e.g.\ $x$ has a typical number of nonzeros) in order to decrease the value of $c$.

We will typically not consider the boundary case $\beta = -1$. Note, however, that if $\beta = -1$ and the prior is finitely-supported (for each $n$), with $\|x\| = 1$ almost surely, then detection is possible for any $\gamma$: in the spiked model, the spike is orthogonal to all of the samples; but in the unspiked model, with probability 1 there will not exist a vector in the support of the prior that is orthogonal to all of the samples.

We now summarize the implications of our lower and upper bounds for some specific priors.
\begin{itemize}
\item {\bf Spherical}: For the spherical prior ($x$ is drawn uniformly from the unit sphere), it was known previously that the PCA threshold $|\beta| = \sqrt{\gamma}$ is optimal for all positive $\beta$ \citep{sphericity}. We show that the PCA threshold is also optimal for all $-1 < \beta < 0$.
\item {\bf Rademacher}: For the Rademacher prior $\IID(\pm 1/\sqrt{n})$, we show that the PCA threshold is optimal for all $\beta > 0$. However, when $\beta$ is negative and sufficiently close to $-1$, the MLE of Theorem~\ref{thm:wishart-mle} succeeds below the PCA threshold.
\item {\bf Sparse Rademacher} (defined in Section~\ref{sec:sparse-rad}): If the sparsity $\rho$ is sufficiently small, the MLE beats PCA in both the positive- and negative-$\beta$ regimes. However, for any fixed $\rho$, if $\beta$ is sufficiently large (and positive) then the PCA threshold is optimal.
\end{itemize}

\noindent See Appendix~\ref{app:wish-priors} for details on the above results, including how they follow from our general upper and lower bounds (Theorems~\ref{thm:wishart-mle} and \ref{thm:wish-nc}). Figure~\ref{fig:wishart} depicts our upper and lower bounds for the Rademacher and sparse Rademacher priors.

As in the Wigner model, our methods often yield the limit value of the (conditional) second moment and thus imply asymptotic bounds on hypothesis testing power via Proposition~\ref{prop:hyptest}; see Appendix~\ref{app:hyptest} for details.

\begin{figure}[!ht]
    \centering
    \begin{minipage}{0.48\textwidth}
    \includegraphics[width=\linewidth]{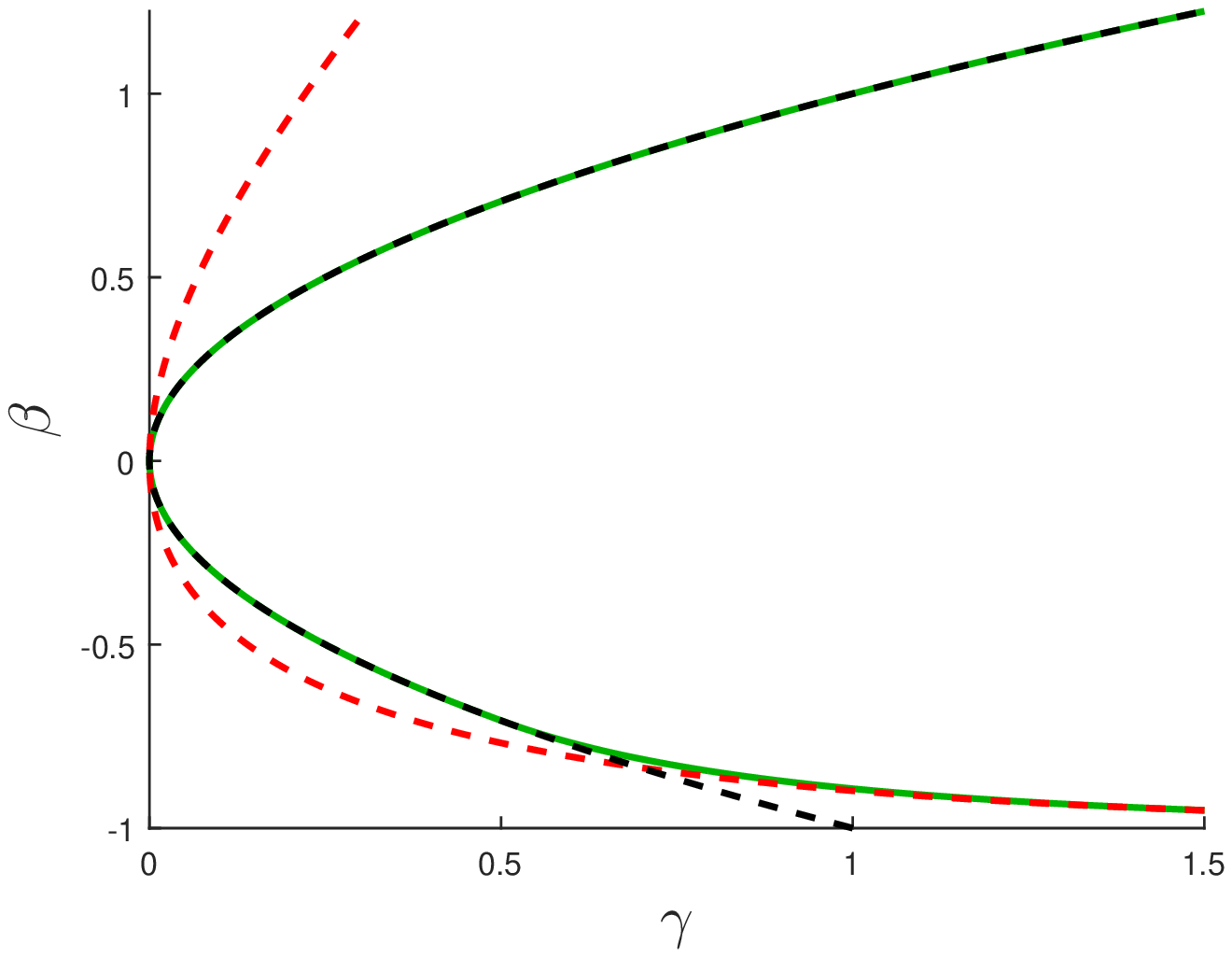}
    \end{minipage}\hfill%
    \begin{minipage}{0.48\textwidth}
    \includegraphics[width=\linewidth]{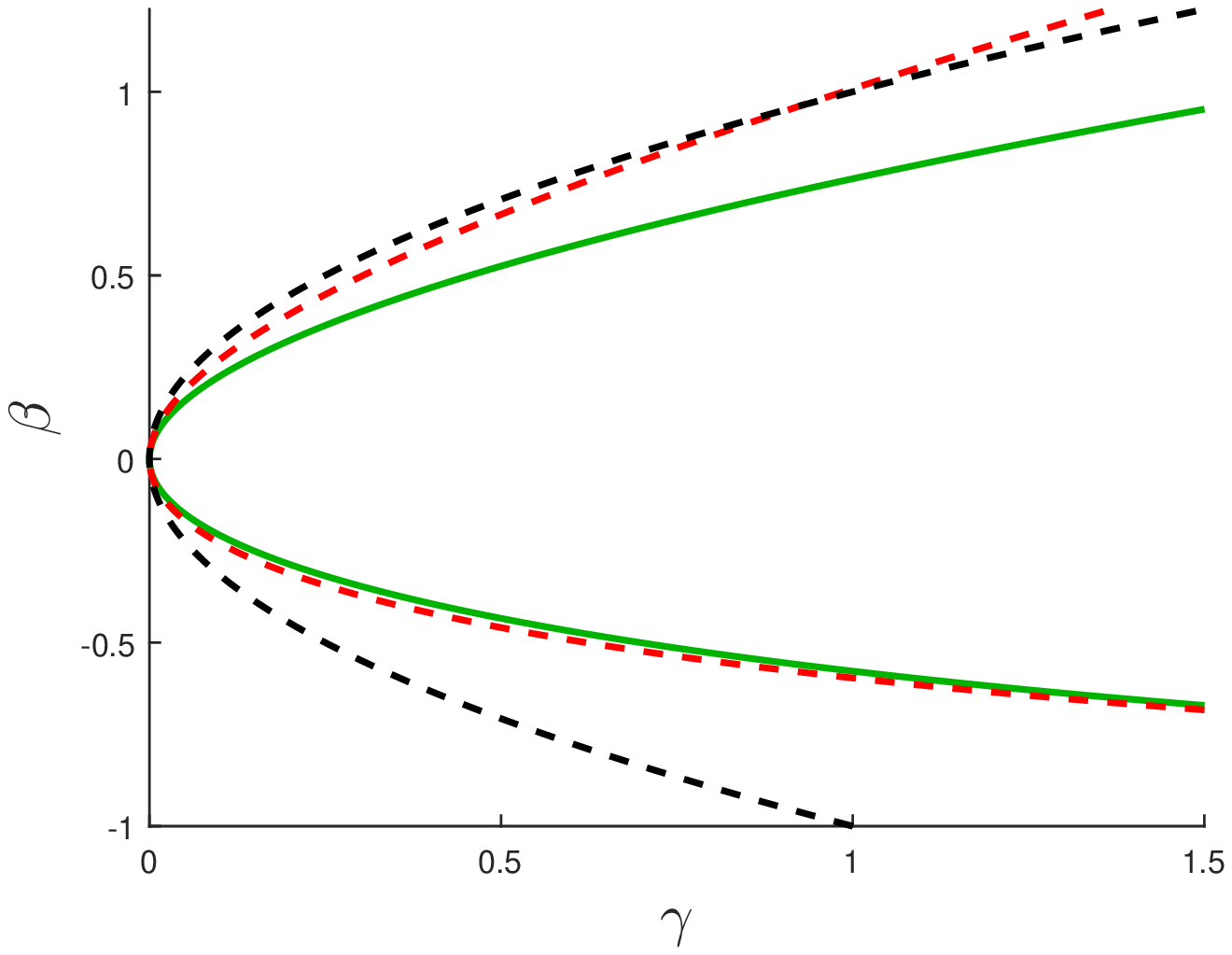}
    \end{minipage}
    \caption{Upper and lower bounds for the spiked Wishart model with Rademacher prior (left panel) and sparse Rademacher prior with $\rho = 0.03$ (right panel). PCA succeeds to the left of the dashed black curve $\beta^2 = \gamma$. To the right of the solid green curve, detection is impossible (by Theorem~\ref{thm:wish-nc}; see Appendix~\ref{app:wish-priors} for details). To the left of the dashed red curve, detection is possible via the inefficient MLE algorithm of Theorem~\ref{thm:wishart-mle}. (The red curve is not a tight analysis of the MLE and is sometimes weaker than the PCA bound.) For the Rademacher prior, the lower bound matches PCA for all $\beta > -0.7$, but the MLE succeeds below the PCA threshold for all $\beta < -0.84$. For the sparse Rademacher prior with any $\rho$, the lower bound matches PCA for sufficiently large positive $\beta$ (not shown); see Proposition~\ref{prop:wish-large-beta}.}
    \label{fig:wishart}
\end{figure}

\subsection{Rate functions}

Our main lower bound will depend on the prior through tail probabilities of the correlation $\langle x,x' \rangle$ of two spikes $x,x'$ drawn independently from the prior $\cX$. These tail probabilities are encapsulated by the rate function $f_\cX:[0,1) \to [0,\infty)$ of the large deviations of $\cX$, which is intuitively defined by $\prob{|\langle x,x' \rangle| \geq t} \approx \exp(-n f_\cX(t))$. Formally we define $f_\cX$ as follows.

\begin{definition}\label{def:rate-function}
Let $\cX = \{\cX_n\}$ be a spike prior. For $x,x'$ drawn independently from $\cX_n$ and $t \in [0,1)$, let
$$ f_{n,\cX}(t) = -\frac1n \log \prob{|\langle x,x' \rangle| \geq t}. $$
Suppose we have $f_{n,\cX}(t) \geq b_{n,\cX}(t)$ for some sequence of functions $b_{n,\cX}$ that converges uniformly on $[0,1)$ to $f_\cX$ as $n \to \infty$. Then we call such $f_{\cX}$ the \emph{rate function} of the prior $\cX$.
\end{definition}

\noindent Without loss of generality, $f_\cX(0) = 0$ and $f_\cX(t)$ is non-decreasing. Note that a tail bound of the form $\prob{|\langle x,x' \rangle| \geq t} \le \mathrm{poly}(n) \exp(-n f_\cX(t))$ is sufficient to establish that $f_\cX$ is a rate function.

We now state the rate functions for some priors of interest. It is proven by \citet{noise-cond} that these indeed satisfy the definition of rate function.

\begin{proposition}[\citet{noise-cond}]
\label{prop:rate-functions}
We have the following rate functions for the spherical, Rademacher, and sparse Rademacher priors.
\begin{itemize}
\item {\bf Spherical}: $f_\mathrm{sph}(t) = -\frac{1}{2} \log(1-t^2)$.
\item {\bf Rademacher}: $f_\mathrm{Rad}(t) = \log 2 - H\left(\frac{1+t}{2}\right)$.
\item {\bf Sparse Rademacher}\footnote{This is for a variant of the sparse Rademacher prior where the sparsity is exactly $\rho n$. See Appendix~\ref{app:wish-priors} for details on how this extends to our variant.} with sparsity $\rho$: $$f_\rho(t) = \min_{\zeta \in [\max(\rho t,1-2\rho),\rho]} G_\rho(\zeta) + \zeta f_\mathrm{Rad}\left(\frac{\rho t}{\zeta}\right)$$
where
$$G_\rho(\zeta) = -H(\{\zeta,\rho-\zeta,\rho-\zeta,1-2\rho+\zeta\}) + 2H(\rho).$$
Here $H(p) = -p \log p - (1-p)\log(1-p)$ is the binary entropy, and $H(\{p_i\}) = -\sum_i p_i \log p_i$.
\end{itemize}
\end{proposition}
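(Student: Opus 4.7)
The plan is to compute each rate function by a standard large-deviations analysis of the overlap $\langle x,x'\rangle$, verifying the uniform-in-$t$ bound that Definition~\ref{def:rate-function} demands (rather than just a pointwise LDP). In all three cases the tail probability reduces to a Cram\'er-type estimate, so the estimates are quite explicit; the work is in identifying the correct functional and, in the sparse case, in carrying out an optimization over an auxiliary variable.

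For the spherical prior I would exploit rotational invariance: conditioning on $x'$ and rotating, $\langle x,x'\rangle$ is distributed as the first entry of a uniform unit vector in $\mathbb{R}^n$, which has density proportional to $(1-s^2)^{(n-3)/2}$ on $[-1,1]$. The tail integral $\int_t^1(1-s^2)^{(n-3)/2}\,ds$ and the normalizing Beta integral are both handled by Laplace's method and yield $-\frac{1}{n}\log\mathbb{P}(|\langle x,x'\rangle|\ge t) = -\tfrac12\log(1-t^2) + O(\log n/n)$ uniformly on $[0,1-\delta]$ for any $\delta>0$; since $f_{\mathrm{sph}}(t)\to\infty$ as $t\to 1$, this extends to the uniform convergence on $[0,1)$ in the definition.

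For the Rademacher prior, write $n\langle x,x'\rangle = \sum_i \varepsilon_i$ with $\varepsilon_i = \mathrm{sgn}(x_i)\mathrm{sgn}(x'_i)$ i.i.d.\ symmetric $\pm 1$. The Chernoff bound $\mathbb{P}(|\sum_i\varepsilon_i|\ge nt)\le 2\exp(-n\Lambda^*(t))$ is a single non-asymptotic inequality uniform in $t$, and evaluating $\Lambda^*(t) = \sup_\eta(\eta t - \log\cosh\eta)$ at $\eta = \mathrm{arctanh}(t)$ produces the stated $f_{\mathrm{Rad}}(t) = \log 2 - H((1+t)/2)$. For the sparse Rademacher prior (in the exact-sparsity variant indicated in the footnote), I would condition on the overlap $K := |\{i : x_i\ne 0 \text{ and } x'_i \ne 0\}|$: given $K = \zeta n$, the quantity $\rho n\langle x,x'\rangle$ is a Rademacher sum of $\zeta n$ terms, controlled by the previous bound with rate $\zeta f_{\mathrm{Rad}}(\rho t/\zeta)$. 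The marginal of $K$ is purely combinatorial, with the number of ordered pairs of supports of size $\rho n$ and overlap $\zeta n$ equal to the multinomial $\binom{n}{\zeta n,\,(\rho-\zeta)n,\,(\rho-\zeta)n,\,(1-2\rho+\zeta)n}$, yielding rate $G_\rho(\zeta)$ after dividing by $\binom{n}{\rho n}^2$ and applying Stirling. Summing over the $O(n)$ admissible values of $\zeta$ (those keeping every multinomial entry nonnegative, together with $\zeta\ge\rho t$ forced by $|S_K|\le K$) and optimizing gives the claimed $\mathrm{poly}(n)\exp(-n f_\rho(t))$.

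The main obstacle is the uniform-in-$t$ requirement of Definition~\ref{def:rate-function}. In the Rademacher and sparse Rademacher cases this comes for free since the Chernoff bound is already non-asymptotic and uniform; in the spherical case it follows from the fact that $f_{\mathrm{sph}}$ blows up at $t=1$, so uniformity on compact subsets of $[0,1)$ suffices. A secondary subtlety is that the sparse-Rademacher formula is naturally stated for the exact-sparsity variant rather than the i.i.d.\ Bernoulli variant of $\mathcal{R}(\rho)$ appearing in the prior; transferring between the two, e.g.\ by conditioning on a typical value of $|\{i:x_i\ne 0\}|$ and absorbing a polynomial cost, is the routine reduction that the footnote defers to Appendix~\ref{app:wish-priors}.
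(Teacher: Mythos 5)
The paper does not prove this proposition itself — it cites \citet{noise-cond} (the sentence immediately preceding the statement). However, the commentary just after the proposition reveals the intended proof route: Cram\'er's theorem for the Rademacher prior, and a conditioning/"typical type" argument for the sparse Rademacher prior. Your proposal follows exactly that route (Chernoff/Cram\'er for Rademacher; conditioning on the support overlap $K=\zeta n$, a multinomial count over support configurations, and a one-dimensional optimization over $\zeta$ for sparse Rademacher; explicit Beta density and Laplace estimates for spherical), so structurally it matches what the paper indicates the reference does. Your calculations of $f_{\mathrm{Rad}}$ from the Legendre transform of $\log\cosh$, and of $G_\rho$ from the multinomial $\binom{n}{\zeta n,(\rho-\zeta)n,(\rho-\zeta)n,(1-2\rho+\zeta)n}\big/\binom{n}{\rho n}^2$ via Stirling, are correct, and the constraints $\zeta\ge\max(\rho t,1-2\rho)$ and $\zeta\le\rho$ are correctly identified.

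One step is not quite right as stated: for the spherical prior you claim that, because $f_{\mathrm{sph}}(t)\to\infty$ as $t\to 1$, uniform convergence on $[0,1-\delta]$ "extends" to $[0,1)$. That inference is false in general. In fact, for $t$ very close to $1$ at fixed $n$, the tail $\int_t^1(1-s^2)^{(n-3)/2}\,ds$ decays like $(1-t^2)^{(n-1)/2}/(n-1)$, so $f_{n,\mathrm{sph}}(t) = \tfrac{n-1}{n}f_{\mathrm{sph}}(t) + O(\log n/n)$, and the error $-\tfrac1n f_{\mathrm{sph}}(t)$ is unbounded as $t\to 1$; $f_{\mathrm{sph}}$ is therefore not a uniform lower bound on $f_{n,\mathrm{sph}}$ all the way to $t=1$ in the literal sense of Definition~\ref{def:rate-function}. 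This is a cosmetic issue — in Theorem~\ref{thm:wish-nc} the rate function is only ever evaluated on $[\eps,1-\eps]$ and at the single point $1-\eps$, so uniform convergence on compact subsets of $[0,1)$ is all that is used — but the gap in the logic of your sentence should be acknowledged rather than claimed as automatic. Everything else, including the reduction from the i.i.d.\ Bernoulli to the exact-sparsity variant via conditioning on the support size (which the paper carries out in Appendix~\ref{app:wish-priors}), is the intended argument.
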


\noindent Note that rate functions for general \iid priors can be easily derived from large deviations theory (Cram\'er's theorem) since $\langle x,x' \rangle$ is the sum of $n$ \iid random variables; this is how the Rademacher rate function is derived. However, to obtain stronger results in some cases, one may use a variant of the prior that conditions on typical outcomes (similarly to our conditioning method for the Wigner model (Section~\ref{sec:cond-method}) or Appendix~A of \citet{bmnn}); this is how the sparse Rademacher rate function is derived.

We will need the following strengthening of the notion of rate function.
\begin{definition}
\label{def:local-chernoff}
We say that a rate function $f_\cX$ for a prior $\cX$ admits a \emph{local Chernoff bound} if there exists $T > 0$ and $C > 0$ such that for any $n$,
$$\prob{|\langle x,x' \rangle| \ge t} \le C \exp(-n f_\cX(t)) \quad \forall t \in [0,T]$$
where $x$ and $x'$ are drawn independently from $\cX_n$.
\end{definition}
The Rademacher and sparse Rademacher rate functions in Proposition~\ref{prop:rate-functions} each admit a local Chernoff bound; see \citet{noise-cond}.

\subsection{Main lower bound result}
\label{sec:wish-main-lower}

We are now ready to state our main lower bound result. Recall that $\lambda^*_\cX$ denotes the Wigner threshold (\ref{eq:lambdastar}).

\begin{theorem}
\label{thm:wish-nc}
Let $\cX$ be a spike prior with rate function $f_\cX$. Let $\beta > -1$ and $\gamma^* > 0$. Suppose that either
\begin{enumerate}[label=(\roman*)]
\item $\beta^2/\gamma^* \le (\lambda^*_\cX)^2$, or
\item $f_\cX$ admits a local Chernoff bound (Definition~\ref{def:local-chernoff}).
\end{enumerate}
If
\begin{equation}
\label{eq:nc-result}
\gamma^* f_\cX(t) \ge (1+\beta) \frac{t(w-t)}{1-t^2} + \frac{1}{2}\log\left(\frac{1-w^2}{1-t^2}\right) \quad \forall t \in (0,1)
\end{equation}
where
$$w = \sqrt{A^2+1}-A \;\text{ with }\; A = \frac{1-t^2}{2t(\beta+1)},$$
then $\Wish(\gamma,\beta,\cX)$ is contiguous to $\Wish(\gamma)$ for all $\gamma > \gamma^*$.
\end{theorem}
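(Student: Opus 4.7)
The approach is the conditional second moment method (Lemma~\ref{lem:cond}) for the Wishart likelihood ratio, with a noise-conditioning event designed so that the conditional second moment's integrand matches the Legendre-transform structure in (\ref{eq:nc-result}).

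First I would compute the \emph{unconditional} second moment. Integrating out the Gaussian columns of $X$ against $\mathcal{N}(0, I + \beta xx^\top)$ and applying the matrix-determinant lemma to the rank-two matrix $M = \tfrac{\beta}{1+\beta\|x\|^2} xx^\top + \tfrac{\beta}{1+\beta\|x'\|^2} x'x'^\top$ collapses the Gaussian moment-generating factor $\det(I-M)^{-N/2}$ to a clean form (analogously to Section~\ref{sec:2nd-comp}), giving
$$\EE_{P_n}\!\left[\left(\frac{dQ_n}{dP_n}\right)^{\!2}\right] \;=\; \EE_{x,x' \sim \cX}\,\bigl(1-\beta^2\langle x,x'\rangle^2\bigr)^{-N/2}.$$
This blows up whenever $|\langle x,x'\rangle| \ge 1/|\beta|$, so for $|\beta|>1$ the unconditional second moment is already infinite as soon as the prior places mass near $|\langle x,x'\rangle|=1$; a conditional variant is essential. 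The layer-cake formula together with the rate-function bound $\Pr[|\langle x,x'\rangle| \ge t] \lesssim \exp(-n f_\cX(t))$ would give
$$\EE\bigl[(1-\beta^2\langle x,x'\rangle^2)^{-N/2}\bigr] \;\lesssim\; \mathrm{poly}(n) \cdot \sup_{t \in (0,1)} \exp\!\Bigl(\tfrac{n}{2\gamma}\log\tfrac{1}{1-\beta^2 t^2} - n f_\cX(t)\Bigr),$$
but the corresponding condition $\gamma f_\cX(t) \ge -\tfrac12\log(1-\beta^2 t^2)$ is strictly stronger than (\ref{eq:nc-result}) and diverges as $|\beta t|\to 1$, so this route is insufficient.

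The crucial step is then to introduce a noise-conditioning event $E_n$ under $Q_n$ with $Q_n(E_n) = 1-o(1)$ constraining the geometry of $X$ along the planted direction. Concretely, for each pair of candidate spikes $(x,x')$, the Gaussian vectors $X^\top x$ and $X^\top x'$ have concentrated norms and controlled inner product; conditioning these to a slab parameterized by a slack $w \in (0,1)$ amounts to restricting the Gaussian integral $\EE_{P_n}\exp(\tfrac12 X^\top M X)$ to a smaller region. After diagonalizing the $2\times 2$ block of $M$ on $\mathrm{span}(x,x')$ and carrying out the truncated Gaussian computation, the integrand should become (up to subexponential factors)
$$\exp\!\left(N\Bigl[(1+\beta)\tfrac{t(w-t)}{1-t^2} + \tfrac{1}{2}\log\tfrac{1-w^2}{1-t^2}\Bigr]\right),$$
exactly the bracketed expression in (\ref{eq:nc-result}). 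The specific $w^*(t) = \sqrt{A^2+1}-A$ arises as the optimal slack, solving $w/(1-w^2) = (1+\beta)t/(1-t^2)$; as a sanity check, at $t\to 0$ the exponent reduces to $N\beta^2 t^2/2$, recovering the Wigner integrand and threshold. Under the main hypothesis of the theorem, this conditional integrand times $\exp(-n f_\cX(t))$ is $O(1)$ uniformly in $t$, and Lemma~\ref{lem:cond} delivers $Q_n \contig P_n$. The two alternative hypotheses control the tail of the $t$-integral: case~(i) replaces the log by the bound $-\log(1-u) \le u/(1-u_0)$ on $|t|\le t_0$ and invokes Proposition~\ref{prop:subg} on the (truncated) Wigner second moment, using $\beta^2/\gamma < (\lambda^*_\cX)^2$; case~(ii) uses the local Chernoff bound to push the rate-function estimate across all of $(0,1)$ to the saddle point.

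The main obstacle I foresee is precisely this noise-conditioning step: designing a concrete event $E_n$ so that (a) it holds under $Q_n$ with probability $1-o(1)$, which should follow from Gaussian concentration of $X^\top x$ uniformly over the prior (in the spirit of the subgaussian calculations of Section~\ref{sec:subg-method}), and (b) its $P_n$-conditional integral against $\exp(\tfrac12 X^\top M X)$ produces precisely the Legendre-transform expression in (\ref{eq:nc-result}). A secondary technical challenge is dovetailing cases~(i) and~(ii) so that the regime $|\beta t|$ near $1$ is handled cleanly, and so that the final bound retains the $(1+\beta)$ factor that makes it strictly sharper than the naive second-moment estimate for $\beta \gg 1$.
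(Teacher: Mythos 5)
Your overall architecture is right and you have most of the key ideas: the unconditional second moment $\EE_{x,x'}(1-\beta^2\langle x,x'\rangle^2)^{-N/2}$ (which you derive correctly and correctly diagnose as unusable for $|\beta|\ge 1$ or $|\beta\langle x,x'\rangle|$ near $1$), the need for a noise-dependent conditioning event, the $2\times 2$ block $\bigl(x^\top Y x,\; x^\top Y x';\; x^\top Y x',\; x'^\top Y x'\bigr)$, the Legendre/saddle-point computation whose critical-point equation $\frac{w}{1-w^2}=\frac{(1+\beta)t}{1-t^2}$ yields exactly $w=\sqrt{A^2+1}-A$, the $t\to 0$ sanity check recovering the Wigner threshold, and the role of hypotheses (i)/(ii) for the small-$t$ regime.

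Where I see a genuine gap is the one place you yourself flag as the crux: what the conditioning event actually is, and the role of $w$ in it. You propose an event constraining the norms and \emph{inner product} of $X^\top x, X^\top x'$ in a ``slab parameterized by a slack $w$,'' and suggest $w$ is then chosen optimally. This does not work as stated, for two reasons. First, the conditioning event must be a function of a single spike $x$ and the data $Y$ (it defines $\tilde Q_n$ by conditioning $Q_n$); the second spike $x'$ only enters afterwards as a dummy variable when you expand $d\tilde Q_n/dP_n = \text{const}\cdot\EE_{x'}[\one_{\Omega(x',Y)}L(Y,x')]$, which is why both $\one_{\Omega(x,Y)}$ and $\one_{\Omega(x',Y)}$ appear in the square. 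Second, and more fundamentally, the saddle-point $w^*(\alpha)$ is \emph{not} the typical normalized off-diagonal under $Q_n$: under $Q_n$ with planted $x$ one has $\EE[x^\top Y x'] = (1+\beta)\alpha$, i.e.\ $w=\alpha$, whereas $w^*(\alpha)$ solves a different equation. So a slab around $w^*$ would not satisfy $Q_n(E_n)=1-o(1)$. The paper instead conditions only on the \emph{diagonal} $\chi_N^2$-concentration $x^\top Y x/\|x\|^2 \in (1+\beta\|x\|^2)(1\pm\eta)$ with $\eta = \log n/\sqrt n$, which manifestly has $Q_n$-probability $1-o(1)$; the off-diagonal $c=x^\top Y x'$ is then integrated freely against the explicit $2\times2$ Wishart density (with shape $\begin{pmatrix}1&\alpha\\\alpha&1\end{pmatrix}$), and $w = c/(1+\beta)$ is the Laplace-method saddle point of that unconstrained integral, not a tuning knob of the conditioning.

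Two smaller points you should also address to make the argument complete. You need to split into three $\alpha$-ranges, not two: $|\alpha|<\eps$ (handled by (i) or (ii), via the unconditional second moment as you say), $|\alpha|\in[\eps,1-\eps]$ (the saddle-point/Laplace computation), and $|\alpha|\in(1-\eps,1]$, where the exponent has terms blowing up at $|\alpha|=1$ that prevent a uniform $o(1)$ error from the conditioning, so the paper bounds $m(\alpha)$ by $m(1)$ there and uses $f_\cX$ near $t=1$ together with the $t\to1$ boundary value $F(\beta,1)=\tfrac12(\beta-\log(1+\beta))$. And your statement ``this conditional integrand times $\exp(-nf_\cX(t))$ is $O(1)$ uniformly in $t$'' is not quite the right criterion: the second moment is an \emph{integral} over $t$, so one needs the layer-cake/change-of-variables bookkeeping (with a $\mathrm{poly}(n)$ Jacobian factor) that the paper carries out explicitly, not merely a pointwise sup.
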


\noindent We expect condition (ii) to hold for all reasonable priors; condition (i) yields a weaker result in some cases but is sometimes more convenient. Some basic properties of (\ref{eq:nc-result}) are discussed in Appendix~\ref{app:prop-F}. In Appendix~\ref{app:monotonicity} we establish the following monotonicity:
\begin{proposition}
\label{prop:eq-monotone}
Let $\cX$ be a spike prior. Fix $\lambda > 0$ and $\bar\beta \in (-1,\infty)\setminus\{0\}$. If (\ref{eq:nc-result}) holds for $\bar\beta$ and $\gamma^* = \bar\beta^2/\lambda^2$ then it also holds for any $\beta > \bar\beta$ and $\gamma^* = \beta^2/\lambda^2$.
\end{proposition}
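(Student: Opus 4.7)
The plan is to reduce Proposition~\ref{prop:eq-monotone} to the claim that for each fixed $t \in (0,1)$, the ratio $R(\beta, t)/\beta^2$ is non-increasing in $\beta$ on $(-1, \infty)\setminus\{0\}$, where $R(\beta, t)$ abbreviates the right-hand side of~(\ref{eq:nc-result}). Once this is known, the hypothesis for $\bar\beta$ reads $f_\cX(t) \ge \lambda^2 R(\bar\beta, t)/\bar\beta^2$, and monotonicity immediately yields $f_\cX(t) \ge \lambda^2 R(\beta, t)/\beta^2$ for any $\beta > \bar\beta$, which is the conclusion with $\gamma^* = \beta^2/\lambda^2$. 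A Taylor expansion of $w$ about $\beta = 0$ (where $w = t$) shows $R(\beta, t) = \tfrac{t^2}{2(1+t^2)}\beta^2 + O(\beta^3)$, so the apparent singularity at $\beta = 0$ is removable and non-increasingness on $(-1, 0)$ and on $(0, \infty)$ glues together.

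The key simplification is that $w = \sqrt{A^2+1} - A$ is precisely the unique maximizer in $w$ of
$$\tilde R(\beta, t, w) := (1+\beta)\,\frac{t(w-t)}{1-t^2} + \frac{1}{2}\log\frac{1-w^2}{1-t^2},$$
since $\partial_w \tilde R = 0$ rearranges to $w^2 + 2Aw - 1 = 0$ and $\partial_w^2 \tilde R = -(1+w^2)/(1-w^2)^2 < 0$. The envelope theorem therefore gives
$$\partial_\beta R(\beta, t) = \frac{t(w-t)}{1-t^2}.$$
Combining this with the first-order relation $(1+\beta) = w(1-t^2)/[t(1-w^2)]$ to substitute for $1+\beta$ in $R$ (whose first term then rewrites as $w(w-t)/(1-w^2)$), a short calculation yields
$$R'(\beta)\,\beta - 2R(\beta) = \frac{(1-y)(1-wt)}{1-t^2} + \log y, \qquad y := \frac{1-t^2}{1-w^2}.$$
Since $\partial_\beta[R/\beta^2] = (R'\beta - 2R)/\beta^3$ and $\mathrm{sign}(\beta) = \mathrm{sign}(w-t) = \mathrm{sign}(y-1)$, non-increasingness of $R/\beta^2$ is equivalent to the single scalar inequality $\log(y)/(y-1) \le (1-wt)/(1-t^2)$ for all $y > 0$ with $y \ne 1$ (the two sign cases of $\beta$ both collapse to this form after dividing by $y-1$).

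The inequality follows from two elementary facts. First, the algebraic identity $1 - wt = \tfrac{1}{2}[(1-w^2) + (1-t^2) + (w-t)^2]$ yields
$$\frac{1-wt}{1-t^2} = \tfrac{1}{2}\bigl(y^{-1} + 1 + z\bigr), \qquad z := \frac{(w-t)^2}{1-t^2} \ge 0.$$
Second, for all $y > 0$ with $y \ne 1$ one has $\log(y)/(y-1) \le \tfrac{1}{2}(y^{-1}+1)$; this is equivalent to $(y-1)\bigl[\tfrac{1}{2}(y - y^{-1}) - \log y\bigr] \ge 0$, which holds because $g(y) := \tfrac{1}{2}(y - y^{-1}) - \log y$ vanishes at $y=1$ and satisfies $g'(y) = (y-1)^2/(2y^2) \ge 0$, forcing $\mathrm{sign}(g(y)) = \mathrm{sign}(y-1)$. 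Chaining the two bounds, $\log(y)/(y-1) \le \tfrac{1}{2}(y^{-1}+1) \le \tfrac{1}{2}(y^{-1}+1+z) = (1-wt)/(1-t^2)$, which is precisely the required inequality. The main difficulty is recognizing the envelope-theorem simplification and the change of variable $y = (1-t^2)/(1-w^2)$; without them, the raw two-variable inequality in $(\beta, t)$ looks intractable, but with them the proof collapses to a standard one-variable bound.
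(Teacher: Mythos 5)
Your proof is correct, and it takes a genuinely different route from the paper's. The paper's argument (Lemma~\ref{lem:Fb2-dec}) computes the mixed partial $\frac{\partial^2}{\partial \beta\, \partial t}\frac{F(\beta,t)}{\beta^2}$ in closed form, notes that the $\beta$-derivative vanishes as $t \to 0$, and then establishes the sign of the mixed partial by verifying that $U_1^2 - U_2^2 = -4\beta^3(2+\beta)t^4(1-t^2)^2$, a raw algebraic verification. You instead exploit the fact that $w$ is the unconstrained maximizer of the inner function $\tilde R(\beta,t,w)$ in $w$, so the envelope theorem immediately gives $\partial_\beta F = \frac{t(w-t)}{1-t^2}$ with no chain-rule terms; you then use the first-order condition $(1+\beta) = w(1-t^2)/[t(1-w^2)]$ to eliminate $\beta$ and reduce everything to the substitution $y = (1-t^2)/(1-w^2)$, after which the claim becomes the single one-variable inequality $\frac{\log y}{y-1} \le \frac{1}{2}(y^{-1}+1+z)$, whose $z$-free part is the classical bound $\log y \le \frac{1}{2}(y - y^{-1})$. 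Your approach buys conceptual transparency --- it explains \emph{why} the monotonicity holds (via the logarithmic mean / hyperbolic-sine inequality after a natural change of variable) rather than verifying it by brute force --- at the cost of needing the envelope observation and the algebraic identity $1-wt = \frac{1}{2}[(1-w^2)+(1-t^2)+(w-t)^2]$, neither of which is obvious a priori. The paper's approach is more mechanical but self-contained. Both your sign bookkeeping (that $\beta$, $w-t$, and $y-1$ share a sign, so the two cases of $\beta$ collapse to one inequality after dividing by $y-1$) and your removable-singularity argument at $\beta = 0$ are sound; the paper handles the $\beta = 0$ point by defining $F/\beta^2$ there by its limit, which matches your Taylor coefficient $t^2/[2(1+t^2)]$.
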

\noindent In particular, if $\lambda = 1$ (so that $\gamma^* = \beta^2$, corresponding to the spectral threshold) we have that if Theorem~\ref{thm:wish-nc} shows that the PCA threshold is optimal for some $\bar\beta \in (-1,\infty)\setminus\{0\}$, then the PCA threshold is also optimal for all $\beta > \bar\beta$.

The following connection to the Wigner model is also proved in Appendix~\ref{app:monotonicity}, corresponding to the $\beta \to 0$ limit of the monotonicity property above:
\begin{corollary}
\label{cor:wig-pos-wish}
Suppose $\langle x,x' \rangle$ is $(\sigma^2/n)$-subgaussian (Definition~\ref{def:subg}), where $x$ and $x'$ are drawn independently from $\cX_n$. Then for any $\beta > 0$ and any $\gamma > \beta^2 \sigma^2$ we have $\Wish(\gamma,\beta,\cX) \contig \Wish(\gamma)$.
\end{corollary}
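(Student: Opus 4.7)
The plan is to deduce the result from Theorem~\ref{thm:wish-nc} via Proposition~\ref{prop:eq-monotone}, realising the $\bar\beta \to 0^+$ limit alluded to in the text. First I would note that the $(\sigma^2/n)$-subgaussianity hypothesis supplies exactly the two ingredients needed to invoke Theorem~\ref{thm:wish-nc}(i): by Proposition~\ref{prop:subg} one has $\lambda^*_\cX \ge 1/\sigma$, and the standard subgaussian tail bound $\prob{|\langle x,x'\rangle| \ge t} \le 2\exp(-nt^2/(2\sigma^2))$ exhibits $f_\cX(t) = t^2/(2\sigma^2)$ as a rate function in the sense of Definition~\ref{def:rate-function} (in fact, with a local Chernoff bound).

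Fix $\beta > 0$ and $\gamma > \beta^2\sigma^2$, pick any $\gamma^* \in (\beta^2\sigma^2, \gamma)$, and set $\lambda = \beta/\sqrt{\gamma^*}$ so that $\lambda < 1/\sigma \le \lambda^*_\cX$. Condition (i) of Theorem~\ref{thm:wish-nc} then reads $\beta^2/\gamma^* = \lambda^2 < (\lambda^*_\cX)^2$ and is automatic. What remains is to verify inequality~(\ref{eq:nc-result}) at $(\beta, \gamma^*)$, and by the monotonicity of Proposition~\ref{prop:eq-monotone} it suffices to verify it for a single $\bar\beta \in (0, \beta)$ paired with $\bar\gamma^* = \bar\beta^2/\lambda^2$; I will take $\bar\beta \to 0^+$.

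For such small $\bar\beta$ I would Taylor-expand the right-hand side of~(\ref{eq:nc-result}) in $\bar\beta$. Writing $w = \sqrt{A^2+1} - A$ with $A = (1-t^2)/(2t(1+\bar\beta))$, the handy identity $1 - w^2 = 2Aw$ rewrites the logarithmic term as $\tfrac12[\log w - \log t - \log(1+\bar\beta)]$. Combined with the expansion $w = t + \bar\beta \cdot t(1-t^2)/(1+t^2) + O(\bar\beta^2)$, a direct calculation shows that the linear-in-$\bar\beta$ contributions cancel, leaving
\[
\mathrm{RHS}(t,\bar\beta) \;=\; \frac{\bar\beta^{2}\,t^{2}}{2(1+t^{2})} + O(\bar\beta^{3}).
\]
Since the subgaussian rate function yields $\bar\gamma^* f_\cX(t) \ge \bar\beta^{2} t^{2}/(2\lambda^{2}\sigma^{2})$ and $\lambda^{2}\sigma^{2} < 1 \le 1+t^{2}$, the leading-order gap $\tfrac{\bar\beta^{2}t^{2}}{2}\bigl[\tfrac{1}{\lambda^{2}\sigma^{2}} - \tfrac{1}{1+t^{2}}\bigr]$ is a strictly positive multiple of $t^{2}$.

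The main obstacle will be making this expansion uniform in $t \in (0,1)$, since $A$ blows up as $t \to 0$ and vanishes as $t \to 1$, so the $O(\bar\beta^{3})$ remainder is a priori non-uniform. On any compact subinterval of $(0,1)$ the control is routine. For $t$ near $0$, the small-$t$ expansion $w = t(1+\bar\beta) + O(t^{3})$ gives $\mathrm{RHS}(t,\bar\beta) = \bar\beta^{2} t^{2}/2 + o(t^{2})$, so both sides scale like $\bar\beta^{2} t^{2}$ and the LHS wins because $1/(2\lambda^{2}\sigma^{2}) > 1/2$. For $t$ near $1$, direct evaluation via $1 - w^{2} = 2Aw$ yields $\mathrm{RHS}(t,\bar\beta) \to \tfrac12[\bar\beta - \log(1+\bar\beta)] = \bar\beta^{2}/4 + O(\bar\beta^{3})$, which is comfortably dominated by $\bar\gamma^* f_\cX(t) \ge \bar\beta^{2}/(2\lambda^{2}\sigma^{2}) > \bar\beta^{2}/2$. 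Patching these three regimes furnishes a single small $\bar\beta$ for which~(\ref{eq:nc-result}) holds; Theorem~\ref{thm:wish-nc} then gives $\Wish(\gamma,\beta,\cX) \contig \Wish(\gamma)$ for all $\gamma > \gamma^*$, and letting $\gamma^* \searrow \beta^{2}\sigma^{2}$ completes the proof.
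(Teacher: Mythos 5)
Your proof is conceptually aligned with the paper's but takes a noticeably longer route, and the detour buys you an unnecessary uniformity headache. The paper's proof also computes the limit $\lim_{\beta\to 0^+} F(\beta,t)/\beta^2 = t^2/(2(1+t^2))$ and also invokes monotonicity, but it uses the underlying Lemma~\ref{lem:Fb2-dec} (that $F(\beta,t)/\beta^2$ is decreasing in $\beta$) \emph{directly as a pointwise bound}: for each $t$ and all $\beta>0$, $F(\beta,t)/\beta^2 \le t^2/(2(1+t^2)) \le t^2/2$. Multiplying by $\beta^2$ and comparing with $\gamma^* f_\cX(t) = \beta^2\sigma^2\cdot t^2/(2\sigma^2) = \beta^2 t^2/2$ verifies condition~(\ref{eq:nc-result}) at the \emph{original} $\beta$ with $\gamma^*=\beta^2\sigma^2$, and condition~(i) of Theorem~\ref{thm:wish-nc} holds since $\beta^2/\gamma^*=1/\sigma^2\le(\lambda^*_\cX)^2$. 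Done — no small-$\bar\beta$, no uniform Taylor remainder, no patching.

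By instead routing through Proposition~\ref{prop:eq-monotone}, you convert a pointwise-in-$t$ estimate into the requirement to exhibit a \emph{single} $\bar\beta>0$ for which~(\ref{eq:nc-result}) holds simultaneously over all $t\in(0,1)$, which is precisely what forces you into the three-regime patching. Your patching sketch is plausible (the boundary values $F(\beta,0)=0$, $\lim_{t\to0^+}\partial_t^2 F=\beta^2$, and $F(\beta,1)=\tfrac12(\beta-\log(1+\beta))$ that you use all agree with the paper's Appendix~\ref{app:prop-F}), and the strict gap $1/(\lambda^2\sigma^2)>1$ does leave room to absorb the lower-order errors, so with care the argument can be completed. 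But note that Proposition~\ref{prop:eq-monotone} is itself \emph{derived} from Lemma~\ref{lem:Fb2-dec}, so you are effectively unwinding the stronger statement into a weaker one and then paying to recover what you threw away. If you open up Lemma~\ref{lem:Fb2-dec} and use the monotonicity as a supremum rather than as a transfer rule, the uniformity problem disappears and the corollary falls out in a few lines, which is exactly what the paper does.
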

\noindent Recall that the subgaussian condition above implies a \emph{Wigner} lower bound for all $\lambda < 1/\sigma$ (Proposition~\ref{prop:subg}). This means whenever Proposition~\ref{prop:subg} implies that the PCA threshold is optimal for the Wigner model, we also have that the PCA threshold is optimal for the Wishart model for any positive $\beta$. Conversely, if Theorem~\ref{thm:wish-nc} shows that PCA is optimal for all $\beta > 0$ then it is also optimal for the Wigner model (see Proposition~\ref{prop:pos-wish-wig}). In light of the above monotonicity (Proposition~\ref{prop:eq-monotone}), these results makes sense because the Wigner model corresponds to the $\gamma \to 0$ limit of the Wishart model \citep{jo-testing-spiked}.

We also show (in Appendix~\ref{app:monotonicity}) that for a wide range of priors, the PCA threshold becomes optimal for sufficiently large $\beta$:
\begin{proposition}\label{prop:wish-large-beta}
Suppose $\cX = \IID(\pi/\sqrt{n})$ where $\pi$ is a mean-zero unit-variance distribution for which $\pi \pi'$ (product of two independent copies of $\pi$) has a moment-generating function $M(\theta) \defeq \EE\exp(\theta \pi \pi')$ which is finite on an open interval containing zero. Then there exists $\bar \beta$ such that for any $\beta \ge \bar\beta$ and any $\gamma > \beta^2$ we have $\Wish(\gamma,\beta,\cX) \contig \Wish(\gamma)$.
\end{proposition}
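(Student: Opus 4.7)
The plan is to invoke Theorem~\ref{thm:wish-nc} at $\gamma^* = \beta^2$, using the monotonicity in Proposition~\ref{prop:eq-monotone} (with $\lambda = 1$) to reduce the task to a single-point verification: it suffices to exhibit one sufficiently large $\bar\beta$ for which~\eqref{eq:nc-result} holds at $\gamma^* = \bar\beta^2$; monotonicity then propagates this to all $\beta\ge\bar\beta$, and Theorem~\ref{thm:wish-nc} delivers contiguity for all $\gamma>\beta^2$.

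First I would supply condition~(ii) of Theorem~\ref{thm:wish-nc}. Write $\langle x,x'\rangle = \frac{1}{n}\sum_{i=1}^n \pi_i\pi'_i$, an empirical mean of i.i.d.\ mean-zero, unit-variance copies of $\pi\pi'$. Since $M(\theta)=\EE\exp(\theta\pi\pi')$ is finite on an open interval containing zero, Cramér's theorem applied on each tail yields Chernoff estimates $\Pr(\pm\langle x,x'\rangle \ge t) \le \exp(-nI_\pm(t))$, where the one-sided rate functions $I_\pm$ are convex, analytic near $0$, and satisfy $I_\pm(t) = t^2/2 + O(t^3)$ (the second cumulant of $\pi\pi'$ is $(\EE\pi^2)^2=1$). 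Setting $f_\cX := \min(I_+, I_-)$ and union-bounding, $\Pr(|\langle x,x'\rangle|\ge t) \le 2\exp(-nf_\cX(t))$, which is a valid rate function admitting a local Chernoff bound.

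The core of the argument is the verification of~\eqref{eq:nc-result} at $\gamma^* = \bar\beta^2$. Split $t\in(0,1)$ into a small-$t$ and a bounded-away regime. For $t\ge\delta$ with $\delta>0$ fixed, $f_\cX(t)\ge f_\cX(\delta)>0$ gives LHS $=\Theta(\bar\beta^2)$, while $w\le1$ together with the identity $1-w^2 = w(1-t^2)/(t(\beta+1))$ (equivalently $w+2A=1/w$) shows RHS $=O(\bar\beta)$ uniformly, so~\eqref{eq:nc-result} holds easily for $\bar\beta$ large. For $t\le\delta$, Taylor-expand $w=\sqrt{A^2+1}-A$ with $A=(1-t^2)/(2t(\beta+1))$ to obtain $w(t)=Bt-B(B^2-1)t^3+O(t^5)$ where $B=\beta+1$; substituting into~\eqref{eq:nc-result} and simplifying, the RHS is an even power series in $t$ whose $t^2$ coefficient is exactly $\beta^2/2$ (cancelling the LHS at leading order) and whose $t^4$ coefficient is $-(B-1)^2(B^2+2B-1)/4 \sim -\bar\beta^4/4$. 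Thus the quartic contribution to LHS $-$ RHS is positive of order $\bar\beta^4/4 + a_4\bar\beta^2$, while the cubic contribution is $a_3\bar\beta^2 t^3$ with $a_3$ the cubic Taylor coefficient of $f_\cX$.

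The main obstacle is the sign of $a_3$. For symmetric $\pi$ one has $\EE\pi^3=0$, hence $\kappa_3 = (\EE\pi^3)^2=0$ and $a_3=0$, so the small-$t$ regime is controlled immediately by the positive quartic. For asymmetric $\pi$, $\kappa_3>0$ gives $a_3=-\kappa_3/6<0$, and one must balance the leading polynomial $a_3\bar\beta^2 t^3 + \tfrac{\bar\beta^4}{4}t^4$ (whose critical point is at $t_*=-3a_3/\bar\beta^2$) against higher-order Taylor data from $f_\cX$ and a small slack gained by verifying~\eqref{eq:nc-result} with a slightly perturbed $\gamma^*=\bar\beta^2(1+\eta)$ before invoking Proposition~\ref{prop:eq-monotone}. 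Choosing $\bar\beta$ large in terms of $|a_3|$ and subsequent Taylor coefficients of $f_\cX$ (all controlled by derivatives of $\log M$ at $0$) then closes the estimate throughout $(0,1)$, and Theorem~\ref{thm:wish-nc} completes the proof.
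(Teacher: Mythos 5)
Your approach follows the paper's own route exactly: extract a Chernoff-type rate function with a local Chernoff bound, verify condition~\eqref{eq:nc-result} at $\gamma^* = \bar\beta^2$ by splitting $t$ into small and bounded-away regimes, and propagate with Proposition~\ref{prop:eq-monotone}. The only superficial difference is that you optimize the Chernoff exponent (Cram\'er) while the paper simply sets $\theta=t$, giving $f_1(t) = t^2 - \log M(t)$; both choices have the same Taylor expansion $t^2/2 - (\kappa_3/6) t^3 + O(t^4)$ at $t=0$, so this changes nothing essential. The large-$t$ regime ($t\geq\delta$) is handled the same way in both: $f_\cX(\delta)>0$ against the explicit bound $F(\beta,1) = \tfrac12(\beta-\log(1+\beta)) = O(\beta)$, which is dominated by $\bar\beta^2$ for $\bar\beta$ large.

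Where you genuinely diverge is in the small-$t$ regime, and here your diagnosis is actually \emph{more careful than the paper's}. The paper claims $\lim_{t\to 0^+}f_1'''(t)=0$ and thereby matches Taylor coefficients through third order with $F(\beta,t)/\beta^2$, so that only the $t^4$ coefficient (which tends to $-\infty$ as $\beta\to\infty$) needs to be beaten. But this third-derivative claim is wrong in general: $f_1'''(0) = -\kappa_3 = -(\EE[\pi^3])^2 \leq 0$, with equality only when $\EE[\pi^3]=0$. You correctly identify this, and correctly note that for asymmetric $\pi$ the cubic deficit $a_3\bar\beta^2 t^3$ (with $a_3<0$) dominates near $t=0$. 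The paper's proof as written only applies to priors with vanishing third moment, which happens to cover every concrete prior the paper uses (spherical, Rademacher, sparse Rademacher) but not the general statement of the proposition.

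However, your proposed fix does not close the gap either. If you verify~\eqref{eq:nc-result} at the perturbed $\gamma^* = \bar\beta^2(1+\eta)$ with $\eta>0$, then Theorem~\ref{thm:wish-nc} yields contiguity only for $\gamma > \bar\beta^2(1+\eta)$, and Proposition~\ref{prop:eq-monotone} (with $\lambda^2 = 1/(1+\eta)$) propagates this to $\gamma > \beta^2(1+\eta)$ for $\beta\geq\bar\beta$ --- strictly weaker than the claimed $\gamma>\beta^2$. The obstruction is structural: at $\gamma^*=\beta^2$ exactly, the $t^2$ coefficients of $\gamma^* f_\cX(t)$ and $F(\beta,t)$ cancel, leaving $\beta^2 a_3 t^3<0$ as the leading term of the difference as $t\to 0^+$, so~\eqref{eq:nc-result} simply fails at $\gamma^*=\beta^2$ whenever $a_3<0$, regardless of how large $\beta$ is. No amount of ``choosing $\bar\beta$ large in terms of $|a_3|$'' rescues this, because the required slack at the critical scale $t_*\sim |a_3|/\beta^2$ forces $\eta\gtrsim |a_3|^2/\beta^2$, a fixed positive gap that cannot be sent to zero. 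So your proof, taken at face value, proves a weaker statement than the proposition unless $\EE[\pi^3]=0$, in which case the perturbation is unnecessary and both your argument and the paper's go through cleanly. You should not assert that ``Theorem~\ref{thm:wish-nc} completes the proof''; instead flag explicitly that the asymmetric case remains open under this method.
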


A final property of Theorem~\ref{thm:wish-nc} is that it gives similar thresholds for similar priors in the sense of Proposition~\ref{prop:compare} for the Wigner model; see Proposition~\ref{prop:wish-compare} for details.

\subsection{Lower bound proof summary}

The full proof of Theorem~\ref{thm:wish-nc} will be completed in the next section, but we now describe the proof outline and give some preliminary results. We approach contiguity for the spiked Wishart model through the second moment method outlined in Section~\ref{sec:contig}. Note that detection can only become easier when given the original sample matrix $X$ (instead of $\frac{1}{N} XX^\top$), so we establish the stronger statement that the spiked distribution on $X$ is contiguous to the unspiked distribution. We first simplify the second moment in high generality.
\begin{proposition}
\label{prop:wishart-2mom}
For any $|\beta| < 1$ there exists $\delta > 0$ such that the following holds. Let $\cX$ be a spike prior supported on vectors $x$ with $1-\delta \le \|x\| \le 1+\delta$. In distribution $Q_n$, let a hidden spike $x$ be drawn from $\cX_n$, and let $N$ independent samples $y_i$, $1 \leq i \leq N$, be revealed from the normal distribution $\cN(0, I_{n \times n} + \beta x x^\top)$. In distribution $P_n$, let $N$ independent samples $y_i$, $1 \leq i \leq N$, be revealed from $\cN(0,I_{n \times n})$. Then we have
$$ \Ex_{P_n}\left[ \left(\dd[Q_n]{P_n} \right)^2 \right] = \Ex_{x,x' \sim \cX}\left[ \left( 1 - \beta^2 \langle x,x' \rangle^2 \right)^{-N/2} \right].$$
\end{proposition}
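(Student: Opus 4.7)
The plan is to compute the likelihood ratio $dQ_n/dP_n$ in closed form using the rank-one structure of the covariance perturbation $I + \beta xx^\top$, then square it, swap the order of expectations, and exploit independence of the samples $y_i$ under $P_n$ to reduce to the $N$-th power of a single Gaussian moment generating function, which is then evaluated via a $2 \times 2$ Sylvester determinant identity.

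Concretely, the matrix determinant lemma and the Sherman--Morrison identity give $\det(I+\beta xx^\top) = 1 + \beta\|x\|^2$ and $(I+\beta xx^\top)^{-1} = I - \frac{\beta}{1+\beta\|x\|^2}xx^\top$, so for a single sample $y$ the density ratio of $\cN(0,I+\beta xx^\top)$ to $\cN(0,I)$ evaluates to $(1+\beta\|x\|^2)^{-1/2}\exp\!\bigl(\frac{\beta(x^\top y)^2}{2(1+\beta\|x\|^2)}\bigr)$. Taking the product over the $N$ samples and averaging over $x \sim \cX_n$ yields $dQ_n/dP_n$; introducing an independent copy $x' \sim \cX_n$ in the squared ratio and using Fubini together with independence of the $y_i$ under $P_n$ gives
$$\Ex_{P_n}\!\left[\left(\dd[Q_n]{P_n}\right)^{\!2}\right] = \Ex_{x,x'}\!\left[(1+\beta\|x\|^2)^{-N/2}(1+\beta\|x'\|^2)^{-N/2}\, G(x,x')^N\right],$$
where $G(x,x') \defeq \Ex_{y\sim\cN(0,I)}\exp\!\bigl(\frac{1}{2}y^\top(axx^\top + a'x'x'^\top)y\bigr)$ and $a = \frac{\beta}{1+\beta\|x\|^2}$, $a' = \frac{\beta}{1+\beta\|x'\|^2}$.

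Assuming positive definiteness of $I - axx^\top - a'x'x'^\top$ (the main obstacle, treated below), the Gaussian integral evaluates to $G(x,x') = \det(I - axx^\top - a'x'x'^\top)^{-1/2}$. Writing $V = [x\ x']$ and $D = \mathrm{diag}(a,a')$ and applying Sylvester's identity collapses this to a $2 \times 2$ determinant:
$$\det(I_n - VDV^\top) = \det(I_2 - DV^\top V) = (1-a\|x\|^2)(1-a'\|x'\|^2) - aa'\langle x,x'\rangle^2.$$
Substituting $1 - a\|x\|^2 = 1/(1+\beta\|x\|^2)$ and analogously for the primed quantity, and simplifying, the right side equals $(1 - \beta^2\langle x,x'\rangle^2)/\bigl[(1+\beta\|x\|^2)(1+\beta\|x'\|^2)\bigr]$. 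Raising to the $-N/2$ power and multiplying by the outside factors $(1+\beta\|x\|^2)^{-N/2}(1+\beta\|x'\|^2)^{-N/2}$ cancels the norm-dependent terms and leaves exactly $(1 - \beta^2\langle x,x'\rangle^2)^{-N/2}$, proving the claimed identity.

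The only step requiring care is the positive-definiteness condition that underlies the Gaussian integral. Since $I - axx^\top - a'x'x'^\top$ agrees with the identity on $\mathrm{span}(x,x')^\perp$, positive definiteness reduces to positivity of the $2 \times 2$ matrix $I_2 - DV^\top V$, i.e.\ to positivity of its trace $1/(1+\beta\|x\|^2) + 1/(1+\beta\|x'\|^2)$ and of its determinant $(1 - \beta^2\langle x,x'\rangle^2)/\bigl[(1+\beta\|x\|^2)(1+\beta\|x'\|^2)\bigr]$. Both reduce to the simpler conditions $1 + \beta\|x\|^2 > 0$, $1 + \beta\|x'\|^2 > 0$, and $|\beta|\cdot|\langle x,x'\rangle| < 1$. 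Under the hypothesis $\|x\|,\|x'\| \in [1-\delta,1+\delta]$ with $|\beta| < 1$, the Cauchy--Schwarz bound $|\langle x,x'\rangle| \le (1+\delta)^2$ and continuity in $\delta$ let us choose a single $\delta = \delta(\beta) > 0$ making all three inequalities hold, which simultaneously validates the Gaussian integral computation and justifies the exchange of expectations (by absolute integrability), completing the proof.
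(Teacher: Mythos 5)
Your proof is correct, and it takes a genuinely different route from the paper's. Both start from the same closed form for the single-sample density ratio via Sherman--Morrison and the matrix determinant lemma, but then diverge. The paper applies the identity $\Ex_{P_n}\!\bigl[(dQ_n/dP_n)^2\bigr] = \Ex_{Q_n}\!\bigl[dQ_n/dP_n\bigr]$, which shifts the outer measure to $Q_n$ so that only one factor of the likelihood ratio remains; under $Q_n$ the quantity $\langle y_i, x'\rangle$ is a scalar Gaussian with variance $\|x'\|^2 + \beta\langle x,x'\rangle^2$, and the computation reduces to a single $\chi_1^2$ moment-generating function. You instead keep both factors under $P_n$, so the inner integral becomes a Gaussian expectation of $\exp\bigl(\tfrac12 y^\top(a\,xx^\top + a'\,x'x'^\top)y\bigr)$ with a rank-two quadratic form, which you collapse to a $2\times 2$ determinant via Sylvester's identity. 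Your approach is more symmetric in $x$ and $x'$ and avoids the measure change; the paper's is slightly more economical, trading the $2\times 2$ determinant for a scalar MGF. Both give the same positivity/convergence condition (you check it as positivity of trace and determinant of $I_2 - DV^\top V$, which is the right diagnostic since $DV^\top V$ is similar to the symmetric matrix $|D|^{1/2}V^\top V |D|^{1/2}$ when $a,a'$ share a sign; the paper states it as $\frac{\beta}{1+\beta\|x'\|^2}(\|x'\|^2+\beta\langle x,x'\rangle^2)<1$, the standard $\chi^2$ MGF domain). One small remark on your final paragraph: the interchange of $\Ex_{x,x'}$ and $\Ex_{y}$ is justified by Tonelli rather than Fubini, since the integrand is nonnegative once $1+\beta\|x\|^2 > 0$; no separate integrability argument is needed for the swap itself, only for the finiteness of the result, which is exactly the positive-definiteness condition you verified.
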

\noindent This result has appeared in higher generality \citep{CMW-cov}; for completeness we give the proof in Section~\ref{sec:wish-2mom-pf}. The condition $1-\delta \le \|x\| \le 1+\delta$ will not be an issue because we can always consider a modified prior that conditions on this $(1-o(1))$-probability event (see Lemma~\ref{lem:cond}). Note that the above second moment has the curious property of symmetry under replacing $\beta$ with $-\beta$. In contrast, the original Wishart model does not, since for instance $\beta > 1$ is allowed while $\beta < -1$ is not. As a result, the second moment method gives good results for negative $\beta$ but substantially sub-optimal results for positive $\beta$. To remedy this, we will apply the conditional second moment method (Lemma~\ref{lem:cond}), conditioning on an event that depends jointly on the signal and noise (we previously only conditioned on the signal).

The proof of Theorem~\ref{thm:wish-nc} has two parts. In Section~\ref{sec:wish-small-dev} we control the \emph{small deviations} of the second moment, i.e.\ the contribution from $\langle x,x' \rangle^2$ values at most some small $\eps > 0$. Here we use either the Wigner lower bound (i) or the local Chernoff bound (ii) (combined with (\ref{eq:nc-result})), whichever is provided. This step uses the basic second moment of Proposition~\ref{prop:wishart-2mom} without conditioning. In Section~\ref{sec:wish-nc} we complete the proof by controlling the remaining \emph{large deviations} of the conditional second moment. Here we use the condition (\ref{eq:nc-result}) on the rate function of the prior.

We remark that conditions (i) and (ii) in Theorem~\ref{thm:wish-nc} are related because using the subgaussian method of Section~\ref{sec:subg-method}, a Chernoff-type bound on $\langle x,x' \rangle$ implies a Wigner lower bound; note however that a local Chernoff bound only needs to hold near $t=0$.

\subsection{Proof of lower bound}
\label{sec:wish-lower-pf}

This section is devoted to proving Theorem~\ref{thm:wish-nc}. Along the way we will also prove Propositions~\ref{prop:wishart-2mom} and \ref{prop:wigner-wishart-bound}.

\subsubsection{Second moment computation: proof of Proposition~\ref{prop:wishart-2mom}}
\label{sec:wish-2mom-pf}
We first compute:
\begin{align*}
\dd[Q_n]{P_n}&(y_1,\ldots,y_N) = \Ex_{x' \sim \cX}\left[ \prod_{i=1}^n \frac{\exp(-\frac12 y_i^\top (I + \beta x' x'^\top)^{-1} y_i)}{\sqrt{\det(I + \beta x' x'^\top)} \exp(-\frac12 y_i^\top y_i)} \right]\\
&= \Ex_{x'}\left[ \det(I + \beta x' x'^\top)^{-N/2} \prod_{i=1}^N \exp\left( -\frac12 y_i^\top ((I + \beta x' x'^\top)^{-1} - I) y_i \right) \right].
\intertext{Note that $(I + \beta x' x'^\top)^{-1}$ has eigenvalue $(1+\beta \|x'\|^2)^{-1}$ on $x'$ and eigenvalue $1$ on the orthogonal complement of $x'$. Thus $(I + \beta x' x'^\top)^{-1} - I = \frac{-\beta}{1+\beta \|x'\|^2} x' x'^\top$, and we have:}
&= \Ex_{x'} \left[ (1+\beta \|x'\|^2)^{-N/2} \prod_{i=1}^N \exp\left( \frac12 \,\frac{\beta}{1+\beta \|x'\|^2} \langle y_i, x' \rangle^2 \right) \right].
\end{align*}

\noindent Passing to the second moment, we compute:
\begin{align*}
\Ex_{P_n}&\left[ \left(\dd[Q_n]{P_n} \right)^2 \right] = \Ex_{Q_n}\left[ \dd[Q_n]{P_n} \right] \\
&= \Ex_{x,x'}\left[ (1+\beta \|x'\|^2)^{-N/2} \prod_{i=1}^N \,\Ex_{y_i \sim \N(0,I+\beta x x^\top)} \exp\left( \frac12 \,\frac{\beta}{1+\beta \|x'\|^2} \langle y_i, x' \rangle^2 \right) \right].
\intertext{Over the randomness of $y_i$, we have $\langle y_i, x' \rangle \sim \cN(0, \|x'\|^2 + \beta \langle x,x' \rangle^2)$, so that the inner expectation can be simplified using the moment-generating function (MGF) of the $\chi_1^2$ distribution:}
&= \Ex_{x,x'}\left[ (1+\beta \|x'\|^2)^{-N/2} \prod_{i=1}^N \left( 1 - \frac{\beta}{1+\beta \|x'\|^2} (\|x'\|^2 + \beta \langle x,x' \rangle^2) \right)^{-1/2} \right] \\
&= \Ex_{x,x'}\left[ \left( 1 - \beta^2 \langle x,x' \rangle^2 \right)^{-N/2} \right]
\end{align*}
as desired. Here the MGF step requires
\begin{equation}
\label{eq:eps-cond}
\frac{\beta}{1+\beta \|x'\|^2}(\|x'\|^2+\beta\langle x,x' \rangle^2) < 1.
\end{equation}
Provided that $\|x\|$ and $\|x'\|$ are sufficiently close to 1, this is true so long as either $|\beta| < 1$ (as assumed by Proposition~\ref{prop:wishart-2mom}) or $\langle x,x' \rangle^2$ is sufficiently small (as in the small deviations of the next section).

\subsubsection{Small deviations and proof of Proposition~\ref{prop:wigner-wishart-bound}}
\label{sec:wish-small-dev}

We now show how to bound the \emph{small deviations}
$$S(\eps) \defeq \Ex_{x,x' \sim \cX} (1-\beta^2\langle x,x' \rangle^2)^{-N/2} \;\one_{\langle x,x' \rangle^2 \leq \eps}$$
of the Wishart second moment in terms of the Wigner second moment. (Assume $\|x\|,\|x'\|$ are sufficiently close to 1 and $\eps > 0$ is a sufficiently small constant so that (\ref{eq:eps-cond}) holds). Letting $\hat\gamma = n/N$ so that $\hat\gamma \to \gamma$, we have
\begin{align*}
S(\eps) &= \Ex_{x,x' \sim \cX} \exp\left(\frac{-n}{2\hat\gamma} \log(1-\beta^2\langle x,x' \rangle^2) \right) \;\one_{\langle x,x' \rangle^2 \leq \eps} \\
&\leq \Ex_{x,x' \sim \cX} \exp\left( \frac{-n}{2\hat\gamma \eps^2} \log(1-\eps^2\beta^2) \langle x,x' \rangle^2 \right)
\end{align*}
using the convexity of $t \mapsto -\log(1-\beta^2 t)$. Note that this resembles the Wigner second moment and so (by definition of $\lambda_\cX^*$) it is bounded as $n \to \infty$ so long as
\begin{equation}
\label{eq:small-dev-cond}
\frac{-1}{\gamma \eps^2} \log(1-\eps^2\beta^2) < (\lambda^*_\cX)^2.
\end{equation}
Proposition~\ref{prop:wigner-wishart-bound} now follows by setting $\eps = 1 + \delta$ for small $\delta > 0$ and conditioning the prior on $\|x\|^2 \le 1+\delta$. (See Section~\ref{sec:compare} for similar arguments; note that the conditioning can only increase the Wigner second moment by a $1+o(1)$ factor.) Furthermore, using the bound $\log t \geq 1 - 1/t$ we have the following fact that will be used in the proof of Theorem~\ref{thm:wish-nc}.
\begin{lemma}
\label{lem:wish-small-dev-1}
If $\beta^2/\gamma < (\lambda^*_\cX)^2$ then there exists $\eps > 0$ such that $S(\eps)$ is bounded as $n \to \infty$.
\end{lemma}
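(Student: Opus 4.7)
The goal is to pick $\eps > 0$ small enough that the bound established just before the lemma,
$$ S(\eps) \leq \Ex_{x,x' \sim \cX} \exp\left( \frac{-n}{2\hat\gamma \eps^2} \log(1-\eps^2\beta^2)\, \langle x,x' \rangle^2 \right), $$
is dominated by a Wigner-type second moment with some $\lambda < \lambda^*_\cX$, after which boundedness is automatic from the definition of $\lambda^*_\cX$. So the plan is to check (\ref{eq:small-dev-cond}), that is
$$ \psi(\eps) \defeq \frac{-1}{\gamma \eps^2} \log(1-\eps^2 \beta^2) < (\lambda^*_\cX)^2, $$
for some suitable $\eps > 0$.

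The key analytic step is to apply the hinted inequality $\log t \ge 1 - 1/t$ (valid for $t > 0$) with $t = 1 - \eps^2 \beta^2$, giving
$$ -\log(1-\eps^2\beta^2) \le \frac{\eps^2 \beta^2}{1-\eps^2 \beta^2}, $$
hence
$$ \psi(\eps) \le \frac{\beta^2}{\gamma\,(1-\eps^2 \beta^2)}. $$
As $\eps \downarrow 0$, the right-hand side decreases to $\beta^2/\gamma$, which by hypothesis is strictly less than $(\lambda^*_\cX)^2$. By continuity, one can then fix $\eps > 0$ small enough that $\psi(\eps) < (\lambda^*_\cX)^2$ with room to spare.

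With such $\eps$ chosen, define $\lambda_n^2 = \frac{-1}{\hat\gamma \eps^2}\log(1-\eps^2\beta^2)$; since $\hat\gamma \to \gamma$, we have $\lambda_n \to \sqrt{\psi(\eps)}$, so $\lambda_n < \lambda^*_\cX$ for all $n$ sufficiently large. Then
$$ S(\eps) \le \Ex_{x,x' \sim \cX} \exp\left(\frac{n \lambda_n^2}{2}\langle x,x' \rangle^2\right), $$
which is bounded as $n \to \infty$ directly by the definition (\ref{eq:lambdastar}) of $\lambda^*_\cX$ (picking any fixed $\lambda' \in (\limsup_n \lambda_n,\, \lambda^*_\cX)$ to absorb the $n$-dependence for large $n$).

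The only loose end is the assumption $1-\delta \le \|x\|,\|x'\| \le 1+\delta$ needed by Proposition~\ref{prop:wishart-2mom} (and implicit in deriving the preceding display for $S(\eps)$); this can be dispatched using the now-familiar conditioning argument from Section~\ref{sec:compare} and Proposition~\ref{prop:wigner-wishart-bound}, conditioning the prior on the $(1-o(1))$-probability event $\|x\| \in [1-\delta,1+\delta]$, which perturbs both the Wishart and Wigner second moments only by a $1+o(1)$ factor and does not affect the strict inequality above. I do not anticipate any real obstacle here: the computation is essentially a Taylor expansion of $-\log(1-u)$ around $u=0$, and the hinted inequality is exactly what is needed to make the small-$\eps$ limit rigorous.
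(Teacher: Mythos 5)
Your argument is correct and is exactly the one the paper has in mind: apply the convexity bound that defines $S(\eps)$, invoke $\log t \ge 1 - 1/t$ to see that $-\frac{1}{\gamma\eps^2}\log(1-\eps^2\beta^2) \le \frac{\beta^2}{\gamma(1-\eps^2\beta^2)} \to \beta^2/\gamma < (\lambda^*_\cX)^2$ as $\eps \to 0$, then choose $\eps$ small so that condition~(\ref{eq:small-dev-cond}) holds and the Wigner second moment controls $S(\eps)$. The extra care you take with $\hat\gamma \to \gamma$ and with conditioning on $\|x\| \approx 1$ is sound and simply spells out details the paper leaves implicit (the latter is handled in Appendix~\ref{app:unit-wlog}).
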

\noindent Note that $\beta^2/\gamma < (\lambda^*_\cX)^2$ is precisely condition (i) in the statement of Theorem~\ref{thm:wish-nc}. If instead condition (ii) holds, we can control the small deviations using the following lemma, deferred to Appendix~\ref{app:wish-small-dev-chernoff}:
\begin{lemma}
\label{lem:wish-small-dev-2}
If (\ref{eq:nc-result}) holds and $f_\cX$ admits a local Chernoff bound, then there exists $\eps > 0$ such that $S(\eps)$ is bounded as $n \to \infty$.
\end{lemma}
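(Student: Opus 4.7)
The plan is to express $S(\eps)$ in tail-probability form by integration by parts, apply the local Chernoff bound, and then use (\ref{eq:nc-result}) to show that $f_\cX$ has the right quadratic lower bound near $t=0$ to overwhelm the quadratic-in-$t$ growth of $-\log(1-\beta^2 t^2)$ in the exponent.

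The case $\beta = 0$ is trivial since $S(\eps) \le 1$. For $\beta \neq 0$, set $h(t) = (1-\beta^2 t^2)^{-N/2}$, nondecreasing on $[0, 1/|\beta|)$ with $h(0)=1$. Writing $Z := |\langle x, x'\rangle|$, a standard layer-cake computation (via $h(Z)\one_{Z \le \sqrt\eps} \le h(Z \wedge \sqrt\eps)$) gives
$$ S(\eps) \;\le\; 1 + \int_0^{\sqrt\eps} h'(t)\,\prob{Z > t}\,dt \;=\; 1 + N\beta^2 \int_0^{\sqrt\eps} t\,(1-\beta^2 t^2)^{-N/2-1}\,\prob{Z > t}\,dt. $$
Taking $\sqrt\eps \le T$ and applying the local Chernoff bound $\prob{Z > t} \le C e^{-n f_\cX(t)}$, it suffices to show that the exponent $\Phi_n(t) := -(N/2+1)\log(1-\beta^2 t^2) - n f_\cX(t)$ is at most $-\eta n t^2$ on $[0,\sqrt\eps]$ for some fixed $\eta > 0$ and all sufficiently large $n$.

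The heart of the argument is a Taylor expansion at $t=0$ of the RHS of (\ref{eq:nc-result}). With $A = (1-t^2)/(2t(1+\beta))$, the identity $\sqrt{A^2+1}-A = \frac{1}{2A} - \frac{1}{8A^3} + O(A^{-5})$ yields
$$ w = t(1+\beta) - t^3 \beta (1+\beta)(2+\beta) + O(t^5). $$
Substituting into the two RHS terms, the first contributes $(1+\beta)\beta\, t^2 + O(t^4)$ and the second contributes $-\frac{\beta(2+\beta)}{2} t^2 + O(t^4)$, and the leading $O(t^2)$ pieces telescope to $\frac{\beta^2 t^2}{2} + O(t^4)$. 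Combined with (\ref{eq:nc-result}), this produces constants $K > 0$ and $t_0 > 0$ with $f_\cX(t) \ge \frac{\beta^2 t^2}{2\gamma^*}(1 - K t^2)$ for all $t \in [0, t_0]$.

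Since $-\log(1-\beta^2 t^2) = \beta^2 t^2 + O(t^4)$, we obtain
$$ \Phi_n(t) \;\le\; \frac{n\beta^2 t^2}{2}\Bigl(\frac{1}{\hat\gamma} - \frac{1}{\gamma^*}\Bigr) + O(n t^4). $$
By hypothesis of Theorem~\ref{thm:wish-nc}, $\gamma > \gamma^*$ and $\hat\gamma \to \gamma$, so the bracketed factor tends to $\frac{1}{\gamma} - \frac{1}{\gamma^*} < 0$. Choosing $\eps$ small enough that $\sqrt\eps \le \min(T, t_0)$ and that the $O(nt^4)$ error is absorbed into half the negative quadratic forces $\Phi_n(t) \le -\eta n t^2$ on $[0,\sqrt\eps]$. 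Then $\int_0^{\sqrt\eps} t\, e^{-\eta n t^2}\,dt \le \frac{1}{2\eta n}$, and the prefactor $CN\beta^2$ (with $N/n \to 1/\gamma$) gives $S(\eps) \le 1 + O(1)$, uniformly in $n$. The only delicate step is Step~2: the intertwined dependence of $w$ on $A$ on $t$ requires some care to see the $O(t^2)$ contributions on the two sides of (\ref{eq:nc-result}) cancel down to exactly $\frac{\beta^2 t^2}{2}$ — once that expansion is in hand, everything else is Chernoff and integration-by-parts bookkeeping.
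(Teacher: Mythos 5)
Your proof is correct and follows essentially the same route as the paper: a layer-cake integration, the local Chernoff bound, and the observation that $F(\beta,t)=\frac{\beta^2 t^2}{2}+O(t^4)$ near $t=0$ so that (\ref{eq:nc-result}) forces a quadratic lower bound on $f_\cX$ that defeats the $\frac{n\beta^2 t^2}{2\hat\gamma}+O(nt^4)$ growth of the exponent, leaving a margin proportional to $1/\gamma^*-1/\gamma>0$. The only cosmetic differences are that the paper first linearizes $-\log(1-\beta^2\langle x,x'\rangle^2)$ via convexity (so the layer-cake exponent is a clean linear function of $\langle x,x'\rangle^2$) and then cites $\lim_{t\to 0}\partial_t^2 F(\beta,t)=\beta^2$ from Appendix~\ref{app:prop-F}, whereas you keep the exact weight and verify that second-derivative fact directly through the $w=\sqrt{A^2+1}-A$ expansion — a worthwhile check, since the paper states those limits without proof.
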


\subsection{Proof of Theorem~\ref{thm:wish-nc}}
\label{sec:wish-nc}

We now prove our main lower bound result using the conditional second moment method. Define $Q_n$ and $P_n$ as in Proposition~\ref{prop:wishart-2mom}. For a vector $x \in \RR^n$ and an $n \times n$ matrix $Y$, define the `good' event $\Omega(x,Y)$ by
$$x^\top Y x/\|x\|^2 \in [(1+\beta\|x\|^2)(1-\eta),(1+\beta\|x\|^2)(1+\eta)]$$
where $\eta = \frac{\log n}{\sqrt n}$. Note that under $Q_n$ (where $x$ is the spike and $Y$ is the Wishart matrix: $Y = \frac{1}{N} XX^\top$ where the columns of $X$ are the samples $y_i$), $x^\top Y x/\|x\|^2 \sim (1+\beta\|x\|^2)\chi_N^2/N$ and so $\Omega(x,Y)$ occurs with probability $1-o(1)$. Let $\tilde Q_n$ be the conditional distribution of $Q_n$ given $\Omega(x,Y)$.

For simplicity we now specialize to the case where $\cX$ is supported on unit vectors $\|x\| = 1$; see Appendix~\ref{app:unit-wlog} for the general case. Similarly to the proof of Proposition~\ref{prop:wishart-2mom}, we compute the conditional second moment as follows.
$$\dd[\tilde Q_n]{P_n}
= (1+o(1)) \Ex_{x' \sim \cX} \left[ \one_{\Omega(x',Y)}\, (1+\beta)^{-N/2} \prod_{i=1}^N \exp\left( \frac12\, \frac{\beta}{1+\beta} \langle y_i, x' \rangle^2 \right) \right]$$
and so $\EE_{P_n} \left(\dd[\tilde Q_n]{P_n}\right)^2 = (1+o(1)) \,\EE_{x,x' \sim \cX}\, m(\langle x,x' \rangle)$ where
\begin{align*}
m(\langle x,x' \rangle) &= \!\Ex_{Y \sim P_n}\! (1+\beta)^{-N}\! \exp\!\left( \frac{N}{2}  \,\frac{\beta}{1+\beta} (x^\top Y x + x'^\top Y x') \right) \!\one_{\Omega(x,Y)} \one_{\Omega(x',Y)} \\
&= \Ex_{Y \sim P_n} (1+\beta)^{-N} \exp\left(N \beta \left(1+\frac{\Delta}{2} + \frac{\Delta'}{2}\right)\right) \one_{|\Delta| \le \eta} \one_{|\Delta'| \le \eta} \numberthis\label{eq:cond-2nd}
\end{align*}
where $\Delta,\Delta'$ are defined by $x^\top Y x = (1+\beta)(1+\Delta)$ and $x'^\top Y x' = (1+\beta)(1+\Delta')$. We will see below that $m$ is indeed only a function of $\langle x,x' \rangle$.

\subsubsection{Interval \texorpdfstring{$|\alpha| \in [\eps,1-\eps]$}{alpha in [eps,1-eps]}}
\label{sec:wish-main-interval}

Let $\alpha = \langle x,x' \rangle$. Let $\eps > 0$ be a small constant (not depending on $n$), to be chosen later. First let us focus on the contribution from $|\alpha| \in [\eps,1-\eps]$, i.e.\ we want to bound
$$M_1 \defeq \Ex_\alpha \left[\one_{|\alpha| \in [\eps,1-\eps]}\, m(\alpha) \right].$$

For $Y \sim P_n$ and with $x,x'$ fixed unit vectors, the matrix
$$\left(\begin{array}{cc} N x^\top Y x & N x^\top Y x' \\ N x^\top Y x' & N x'^\top Y x' \end{array}\right)$$
follows the $2 \times 2$ Wishart distribution with $N$ degrees of freedom and shape matrix
$$\left(\begin{array}{cc} 1 & \alpha \\ \alpha & 1 \end{array}\right),\qquad \alpha = \langle x,x' \rangle.$$

By integrating over $c = x^\top Y x'$ and using the PDF of the Wishart distribution, we have
\begin{align*}
m(\alpha) = \iiint & (1+\beta)^2 \exp\Big\{N\Big[-\log(1+\beta) + \beta\left(1 + \frac{\Delta}{2} + \frac{\Delta'}{2}\right) \\
&+ \left(\frac{1}{2} - \frac{3}{N}\right) \log((1+\beta)^2(1+\Delta)(1+\Delta')-c^2) \\
&- \frac{1}{1-\alpha^2}\left((1+\beta)\left(1+\frac{\Delta}{2}+\frac{\Delta'}{2}\right)-\alpha c\right) - \frac{1}{2} \log(1-\alpha^2) \\
&+ \log(N/2) - \frac{1}{N}\log \Gamma_2(N/2)\Big]\Big\} \,\dee c\, \dee \Delta\, \dee \Delta'
\end{align*}
where the integration is over the domain $|\Delta| \le \eta$, $|\Delta'| \le \eta$, and $|c| \le {(1+\beta)\sqrt{(1+\Delta)(1+\Delta')}}$, and $\Gamma_2$ denotes the multivariate gamma function.

Using $\eta = o(1)$ and applying Stirling's approximation to $\Gamma_2$, we have for $|\alpha| \in [\eps,1-\eps]$,
\begin{align*}
m(\alpha) \le \max_{|c| \le 1+\beta} & (1+\beta)^2 \exp\Big\{N\Big[-\log(1+\beta) + \beta + \frac{1}{2} \log((1+\beta)^2-c^2) \\
&- \frac{1+\beta-\alpha c}{1-\alpha^2} - \frac{1}{2} \log(1-\alpha^2) + 1 + o(1) \Big]\Big\}
\end{align*}
where the $o(1)$ is uniform in $\alpha$. Letting $w = c/(1+\beta)$ and solving explicitly for the optimal $w$,
\begin{align*}
m(\alpha) \le m_1(\alpha) \defeq (1+\beta)^2  \exp\Big\{N\Big[&(1+\beta) \frac{\alpha(w-\alpha)}{1-\alpha^2} \\ 
&+ \frac{1}{2}\log\left(\frac{1-w^2}{1-\alpha^2}\right) + o(1) \Big]\Big\}
\end{align*}
$$\text{where} \qquad w = w(\alpha) = \pm\sqrt{A^2+1}-A \;\text{ with }\; A = \frac{1-\alpha^2}{2\alpha(\beta+1)}$$
and $\pm$ has the same sign as $\alpha$.

We now show how to bound the contribution to $M_1$ from positive $\alpha$; the proof for negative $\alpha$ is similar. We have
\begin{align*}
&\Ex_\alpha \left[\one_{\alpha \in [\eps,1-\eps]}\, m_1(\alpha) \right] = \int_0^\infty \problr{\one_{\alpha \in [\eps,1-\eps]}\, m_1(\alpha) \ge u} \dee u \\
&= \int_0^\infty \problr{\alpha \in [\eps,1-\eps] \text{ and } m_1(\alpha) \ge u} \dee u \\
&= m_1(\eps) \problr{\alpha \in [\eps,1-\eps]} + \int_{m_1(\eps)}^{m_1(1-\eps)} \problr{\alpha \in [\eps,1-\eps] \text{ and } m_1(\alpha) \ge u} \dee u.
\intertext{Since $m_1(\alpha)$ is strictly increasing on $[0,1]$ (see Appendix~\ref{app:prop-F}), we can apply the change of variables $u = m_1(t)$ to obtain}
&= m_1(\eps) \problr{\alpha \in [\eps,1-\eps]} + \hspace{-2pt}\int_\eps^{1-\eps} \hspace{-4pt} \problr{\alpha \in [\eps,1-\eps] \text{ and } \alpha \ge t} m_1(t) \,O(N)\, \dee t \\
&\le m_1(\eps) \problr{\alpha \ge \eps} + O(N) \int_\eps^{1-\eps} \problr{\alpha \ge t} m_1(t) \dee t.
\end{align*}

\noindent Plugging in the rate function to bound $\problr{\alpha \ge \eps}$ and $\prob{\alpha \ge t}$, we obtain $M_1 = o(1)$ provided that (\ref{eq:nc-result}) holds. The contribution from negative $\alpha$ yields the same condition (\ref{eq:nc-result}) due to the symmetry $w(-\alpha) = -w(\alpha)$ and $m_1(-\alpha) = m_1(\alpha)$.

\subsubsection{Interval \texorpdfstring{$|\alpha| \in [0,\eps)$}{alpha in [0,eps)}}

This case needs special consideration because both sides of (\ref{eq:nc-result}) approach 0 as $t \to 0$ and so the last step above requires $\alpha$ to be bounded away from 0. Since (up to a factor of $1+o(1)$) conditioning $Q_n$ on $\Omega(x,Y)$ only decreases the second moment (for each value of $\alpha$), we can revert back to the basic second moment: the contribution $M_2 \defeq \Ex_\alpha \left[\one_{|\alpha| \in [0,\eps)}\, m(\alpha) \right]$ is bounded by the small deviations $S(\eps^2)$ from Section~\ref{sec:wish-small-dev}. It therefore follows from either Lemma~\ref{lem:wish-small-dev-1} or Lemma~\ref{lem:wish-small-dev-2} that provided $\eps$ is small enough, $M_2$ is bounded as $n \to \infty$.

\subsubsection{Interval \texorpdfstring{$|\alpha| \in (1-\eps,1]$}{alpha in (1-eps,1]}}

This case needs special consideration because in the calculations for the $[\eps,1-\eps]$ interval, certain terms in the exponent blow up at $|\alpha|=1$ which prevents us from replacing $\Delta,\Delta'$ by an error term that is $o(1)$ uniformly in $\alpha$. To deal with this case we will bound $m(\alpha)$ by its worst-case value $m(1)$.

To see that $m(1)$ is the worst case, notice from (\ref{eq:cond-2nd}) that up to an $\exp(o(N))$ factor (which will turn out to be negligible), $m(\alpha)$ is proportional to $\prob{|\Delta| \le \eta \text{ and } |\Delta'| \le \eta}$. Since $N x^\top Y x$ and $N x'^\top Y x'$ each follow at $\chi_N^2$ distribution (with correlation that increases with $|\alpha|$), this probability is maximized when they are perfectly correlated at $|\alpha| = 1$.

We now proceed to bound $m(1)$. Let $Y \sim P_n$, and let $x,x'$ be fixed unit vectors with $|\alpha| = 1$. We have that $N x^\top Y x$ follows a $\chi_N^2$ distribution, with $N x'^\top Y x' = N x^\top Y x$. Similarly to the computation for $[\eps,1-\eps]$ we obtain
$$m(1) \le m_3 \defeq (1+\beta) \exp\left\{N\left[-\frac{1}{2} \log(1+\beta) - \frac{1}{2}(1-\beta) + \frac{1}{2} + o(1)\right]\right\}$$
and
$$M_3 \defeq \Ex_\alpha \left[\one_{|\alpha| \in (1-\eps,1]}\, m(\alpha) \right] \le \exp(o(N)) \prob{|\alpha| \ge 1-\eps}\, m_3.$$
Plugging in the rate function, $M_3$ is $o(1)$ provided that $\gamma f_\cX(1-\eps) > -\frac{1}{2}\log(1+\beta) - \frac{1}{2}(1-\beta) + \frac{1}{2}$. This follows from (\ref{eq:nc-result}) (near $t=1$) provided $\eps$ is small enough (since $f_\cX$ is an increasing function of $t$).

\section*{Acknowledgements}
The authors are indebted to Philippe Rigollet for helpful discussions and for many comments on a draft. We thank the anonymous reviewers for many helpful, detailed comments.

\begin{supplement}
\sname{Supplement A}\label{supp}
\stitle{Optimality and Sub-optimality of PCA in Spiked Random Matrix Models: Supplementary Proofs}
\sdescription{Included below as appendices. Contains proofs omitted from this paper for the sake of length.}
\end{supplement}

\appendix

\section{Behavior near criticality}\label{app:critical}

In Gaussian Wigner settings where we have established contiguity for all $\lambda<1$, it is natural to ask whether the spiked and unspiked models remain contiguous for a sequence $\lambda = 1 + \delta_n$ with $\delta_n \to 0$ (here $\delta_n$ may be positive or negative). However, this is never the case; it is possible to consistently distinguish the models in this critical case. By adding additional GOE noise, we can reduce to the case $\lambda = 1 - \eps$ for arbitrary fixed $\eps > 0$ (perhaps taking a tail of the sequence). It is known \citep{jo-testing-spiked} that (regardless of the spike prior) the hypothesis testing error (sum of type I and type II errors) in this case tends to $0$ as $\eps \to 0$; thus the minimum hypothesis testing error in the original problem cannot be bounded away from zero.


A similar result for the positively-spiked ($\beta > 0$) Wishart model follows from \citet{sphericity}: if $\gamma$ is fixed and $\beta = \sqrt{\gamma} + \delta_n$ with $\delta_n \to 0$ then it is possible to consistently distinguish the spiked and unspiked models. (We expect the analogous result to hold for $\beta < 0$ but to the best our knowledge this has not been proven.)

\section{Bounds on hypothesis testing}\label{app:hyptest}

For both the Gaussian Wigner and Wishart models, for the spherical prior (or equivalently, limited to spectral-based tests) the optimal tradeoff curve (power envelope) between type I and type II error is known exactly in the $n\to\infty$ limit \citep{sphericity,jo-testing-spiked}. For other priors, one can apply the optimal spectral-based test from above to obtain an upper bound; however, better tests (which do not depend only on the spectrum) may be possible.

In many cases we can use Proposition~\ref{prop:hyptest} to obtain lower bounds (which do not match the upper bound above). First note that Proposition~\ref{prop:hyptest} is still valid (in the $n\to\infty$ limit) in cases when we have used the \emph{conditional} second moment. (This is because if $\tilde Q_n$ is obtained from $Q_n$ by conditioning on a $(1-o(1))$-probability event, asymptotic hypothesis testing bounds for $\tilde Q_n$ against $P_n$ imply the same bounds for $Q_n$ against $P_n$.)

For the Gaussian Wigner model, Theorem~\ref{thm:subg-iid} (subgaussian method for \iid priors) and Theorem~\ref{thm:cond-method-app} (conditioning method) both give the limit value (as $n\to\infty$) of the (conditional) second moment, and in fact the value is $(1-\lambda^2)^{-1/2}$ in both of these cases. Therefore, any time we have used one of those two methods, we obtain asymptotic hypothesis testing bounds from Proposition~\ref{prop:hyptest}. This applies to, for instance, the \iid Gaussian, Rademacher, and sparse Rademacher priors. The same bounds also hold for the spherical prior (although the exact asymptotic power envelope is known in this case) because the comparison method of Proposition~\ref{prop:compare} preserves the value of the second moment.

For the Wishart model, suppose we have a prior for which we know the \emph{Wigner} second moment has limit value $(1-\lambda^2)^{-1/2}$ (as above). Furthermore, suppose we have a Wishart lower bound for this prior via Theorem~\ref{thm:wish-nc}, using the Wigner second moment to control the small deviations (i.e.\ condition (i) of Theorem~\ref{thm:wish-nc} holds). From the proof of Theorem~\ref{thm:wish-nc}, the limit value of the Wishart (conditional) second moment is determined by the small deviations of Section~\ref{sec:wish-small-dev}; the remaining large deviations contribute $o(1)$. We see from Section~\ref{sec:wish-small-dev} that the asymptotic value of the small deviations is bounded by the value of the Wigner second moment with $\lambda^2 = -\log(1-\eps^2\beta^2)/\gamma\eps^2 \to \beta^2/\gamma$ as $\eps \to 0$. Therefore the limsup of the Wishart (conditional) second moment is at most $(1-\beta^2/\gamma)^{-1/2}$, which yields hypothesis testing bounds via Proposition~\ref{prop:hyptest}.

\section{Alternative proof for spherically-spiked Wigner}\label{app:confluent}

Here we give an alternative proof of Corollary~\ref{cor:sphere-prior}. The proof deals with the second moment directly rather than comparing to the \iid Gaussian prior.

\begin{repcorollary}{cor:sphere-prior}
Consider the spherical prior $\cXs$. If $\lambda < 1$ then $\GWig(\lambda,\cXs)$ is contiguous to $\GWig(0)$.
\end{repcorollary}
\begin{proof}
By symmetry, we reduce the second moment to
$$ \Ex_{x,x'} \exp\left(\frac{n \lambda^2}{2} \langle x,x' \rangle^2\right) = \Ex_{x} \exp\left(\frac{n \lambda^2}{2} \langle x,e_1 \rangle^2\right) = \Ex_{x_1} \exp\left(\frac{n \lambda^2}{2} x_1^2\right), $$
where $e_1$ denotes the first standard basis vector. Note that the first coordinate $x_1$ of a point uniformly drawn from the unit sphere in $\RR^n$ is distributed proportionally to $(1-x_1^2)^{(n-3)/2}$, so that its square $y$ is distributed proportionally to $(1-y)^{(n-3)/2} y^{-1/2}$. Hence $y$ is distributed as $\mathrm{Beta}(\frac12,\frac{n-1}{2})$. The second moment is thus the moment generating function of $\mathrm{Beta}(\frac12,\frac{n-1}{2})$ evaluated at $n \lambda^2/2$, and as such, we have
\begin{equation} \Ex_{P_n}\left(\dd[Q_n]{P_n}\right)^2 = {}_1 F_1\left( \frac12 ; \frac{n}{2} ; \frac{\lambda^2 n}{2} \right),  \end{equation}
where ${}_1 F_1$ denotes the confluent hypergeometric function.

Suppose $\lambda < 1$. Equation~13.8.4 from [\citet{dlmf}] grants us that, as $n \to \infty$,
\begin{align*}
{}_1 F_1\left( \frac12 ; \frac{n}{2} ; \frac{\lambda^2 n}{2} \right) &= (1+o(1)) \left(\frac{n}{2}\right)^{1/4} e^{\zeta^2 n/8} \left( \lambda^2 \sqrt{\frac{\zeta}{1-\lambda^2}} U(0,\zeta \sqrt{n/2}) \right. \\
&\qquad \left. + \left( -\lambda^2 \sqrt{\frac{\zeta}{1-\lambda^2}} + \sqrt{\frac{\zeta}{1-\lambda^2}} \right) \frac{U(-1,\zeta\sqrt{n/2})}{\zeta\sqrt{n/2}} \right),
\intertext{where $\zeta = \sqrt{2(\lambda^2-1-2\log \lambda)}$ and $U$ is the parabolic cylinder function,}
&= (1+o(1)) \left(\frac{n}{2}\right)^{1/4} e^{\zeta^2 n/8} \left( \lambda^2 \sqrt{\frac{\zeta}{1-\lambda^2}} e^{-\zeta^2 n / 8} (\zeta \sqrt{n/2})^{-1/2} \right.\\
&\qquad \left. + \left( -\lambda^2 \sqrt{\frac{\zeta}{1-\lambda^2}} + \sqrt{\frac{\zeta}{1-\lambda^2}} \right) \frac{e^{-\zeta^2 n/8} (\zeta \sqrt{n/2})^{1/2}}{\zeta\sqrt{n/2}} \right),
\intertext{by Equation~12.9.1 from [\citet{dlmf}],}
&= (1+o(1)) (1-\lambda^2)^{-1/2},
\end{align*}
which is bounded as $n \to \infty$, for all $\lambda < 1$. The result follows from Lemma~\ref{lem:sec}.
\end{proof}

\section{Conditioning method for Gaussian Wigner model}
\label{app:cond-method}

In this section we give the full details of the conditioning method for the Gaussian Wigner model.
We assume that the prior is $\mathcal{X} = \IID(\pi/\sqrt{n})$ where $\mathcal{\pi}$ is a finitely-supported distribution on $\RR$ with mean zero and variance one.

The argument that we will use is based on \citet{bmnn}, in particular their Proposition~5. Suppose $\omega_n$ is a set of `good' $x$ values so that $x \in \omega_n$ with probability $1-o(1)$. Let $Q_n = \GWig_n(\lambda,\cX)$ and let $P_n = \GWig_n(0)$. Let $\tilde\cX_n$ be the conditional distribution of $\cX_n$ given $\omega_n$. Let $\tilde Q_n = \GWig_n(\lambda,\tilde \cX)$. Our goal is to show $\tilde Q_n \contig P_n$,  from which it follows that $Q_n \contig P_n$ (see Lemma~\ref{lem:cond}). 
If we let $\Omega_n$ be the event that $x$ and $x'$ are both in $\omega_n$, our second moment becomes
\begin{align*}
\Ex_{P_n}\left(\dd[\tilde Q_n]{P_n}\right)^2 &= \Ex_{\tilde x,\tilde x' \sim \tilde \cX}\left[\exp\left(\frac{n \lambda^2}{2} \langle \tilde x,\tilde x' \rangle^2\right)\right] \\
&= (1+o(1))\Ex_{x,x' \sim \cX}\left[\one_{\Omega_n}\exp\left(\frac{n \lambda^2}{2} \langle x,x' \rangle^2\right)\right].
\end{align*}

Let $\Sigma \subseteq \mathbb{R}$ (a finite set) be the support of $\pi$, and let $s = |\Sigma|$. We will index $\Sigma$ by $[s] = \{1,2,\ldots,s\}$ and identify $\pi$ with the vector of probabilities $\pi \in \mathbb{R}^s$. For $a,b \in \Sigma$, let $N_{ab}$ denote the number of indices $i$ for which $x_i = a/\sqrt{n}$ and $x'_i = b/\sqrt{n}$ (recall $x_i$ is drawn from $\pi/\sqrt{n}$). Note that $N$ follows a multinomial distribution with $n$ trials, $s^2$ outcomes, and with probabilities given by $\bar\alpha = \pi \pi^\top \in \mathbb{R}^{s \times s}$. We have
$$\frac{n\lambda^2}{2}\langle x,x' \rangle^2 = \frac{\lambda^2}{2n}\left(\sum_{a,b \in \Sigma} a b N_{ab}\right)^2 = \frac{\lambda^2}{2n}\sum_{a,b,a',b'} aba'b' N_{ab} N_{a'b'} = \frac{1}{n}N^\top A N$$
where $A$ is the $s^2 \times s^2$ matrix $A_{ab,a'b'} = \frac{\lambda^2}{2} aba'b'$, and the quadratic form $N^\top A N$ is computed by treating $N$ as a vector of length $s^2$.

We are now in a position to apply Proposition~5 from \citet{bmnn}. Define $Y = (N - n \bar\alpha)/\sqrt n$. Let $\Omega_n$ be the event defined in Appendix~A of \citet{bmnn}, which enforces that the empirical distributions of $\sqrt{n} x$ and $\sqrt{n} x'$ are close to $\pi$; namely,
$$\max_j \left| \sum_i N_{ij} - n \pi_j \right| \le \eta_n \quad \text{and} \quad \max_i \left| \sum_j N_{ij} - n \pi_i \right| \le \eta_n$$
where (for concreteness) $\eta_n = \sqrt{n} \log n$.

Note that $\bar\alpha$ (treated as a vector of length $s^2$) is in the kernel of $A$ because $\pi$ is mean-zero: the inner product between $\bar\alpha$ and the $(a,b)$ row of $A$ is
$$\sum_{a',b'} A_{ab,a'b'} \bar\alpha_{a'b'} = \frac{\lambda^2}{2} \sum_{a',b'} aba'b'\pi_{a'}\pi_{b'} = \frac{\lambda^2}{2} \,ab\left(\sum_{a'} a' \pi_{a'}\right)\left(\sum_{b'} b' \pi_{b'}\right) = 0.$$
Therefore we have $\frac{1}{n}N^\top A N = Y^\top A Y$ and so we can write our second moment as $(1+o(1))\EE[\one_{\Omega_n} \exp(Y^\top A Y)]$.

Let $\Delta_{s^2}(\pi)$ denote the set of nonnegative vectors $\alpha \in \mathbb{R}^{s^2}$ with row- and column-sums prescribed by $\pi$, i.e.\ treating $\alpha$ as an $s \times s$ matrix, we have (for all $i$) that row $i$ and column $i$ of $\alpha$ each sum to $\pi_i$. Let $D(u,v)$ denote the KL divergence between two vectors: $D(u,v) = \sum_i u_i \log(u_i/v_i)$. For convenience, we restate Proposition~5 in \citet{bmnn}.

\begin{proposition}[\citet{bmnn}, Proposition~5]
\label{prop:nn}
Let $\pi \in \mathbb{R}^s$ be any vector of probabilities. Let $A$ be any $s^2 \times s^2$ matrix. Define $N$, $Y$, $\bar\alpha$, and $\Omega_n$ as above (depending on $\pi$). Let
$$m = \sup_{\alpha \in \Delta_{s^2}(\pi)} \frac{(\alpha-\bar\alpha)^\top A (\alpha-\bar\alpha)}{D(\alpha,\bar\alpha)}.$$
If $m < 1$ then
$\lim_{n \to \infty} \EE[\one_{\Omega_n} \exp(Y^\top A Y)] = \EE[\exp(Z^\top A Z)] < \infty$,
where $Z \sim \cN(0,\diag(\bar\alpha) - \bar\alpha \bar\alpha^\top)$. If $m > 1$ then
$\lim_{n \to \infty} \EE[\one_{\Omega_n} \exp(Y^\top A Y)] = \infty$.
\end{proposition}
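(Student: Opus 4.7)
The plan is to separate the analysis into the convergence case ($m<1$) and the divergence case ($m>1$), combining a multivariate central limit theorem for $Y$ with Sanov-type (method-of-types) bounds on multinomial probabilities.

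For $m<1$, I would fix a truncation radius $R$ and decompose
$$\EE[\one_{\Omega_n}\exp(Y^\top A Y)] = \EE[\one_{\Omega_n}\one_{\{\|Y\|\leq R\}}\exp(Y^\top A Y)] + \EE[\one_{\Omega_n}\one_{\{\|Y\|>R\}}\exp(Y^\top A Y)].$$
For the inner term, the multivariate CLT gives $Y \Rightarrow Z \sim \cN(0,\diag(\bar\alpha)-\bar\alpha\bar\alpha^\top)$, so together with $\prob{\Omega_n}\to 1$ and the continuous mapping theorem applied to the bounded continuous (a.e.) map $y\mapsto \one_{\{\|y\|\leq R\}}\exp(y^\top A y)$, the first expectation converges to $\EE[\one_{\{\|Z\|\leq R\}}\exp(Z^\top A Z)]$, and monotone convergence as $R\to\infty$ produces $\EE[\exp(Z^\top A Z)]$. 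Finiteness of this Gaussian integral is a direct consequence of the infinitesimal form of $m<1$: Taylor-expanding $D$ yields $v^\top A v < \tfrac{1}{2}v^\top\diag(\bar\alpha)^{-1}v$ on the tangent space to $\Delta_{s^2}(\pi)$ at $\bar\alpha$, which is precisely the support of $Z$; hence $I-2\Sigma^{1/2}A\Sigma^{1/2}$ is positive definite on $\mathrm{range}(\Sigma)$ and the integral equals $\det(I-2\Sigma^{1/2}A\Sigma^{1/2})^{-1/2}$.

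The main obstacle is bounding the tail term uniformly in $n$. I would pick $L$ with $m<L<1$, so that $(\alpha-\bar\alpha)^\top A(\alpha-\bar\alpha) \leq L\,D(\alpha,\bar\alpha)$ for every $\alpha\in\Delta_{s^2}(\pi)$; on $\Omega_n$, $\alpha := N/n$ sits within $O(\log n/\sqrt n)$ of $\Delta_{s^2}(\pi)$, and Lipschitz continuity of both quadratic forms preserves this inequality up to a $1+o(1)$ factor. Combined with the Stirling bound $\prob{N=n\alpha}\leq \mathrm{poly}(n)\exp(-nD(\alpha,\bar\alpha))$, the tail is at most
$$\mathrm{poly}(n)\sum_{\alpha:\,\|\alpha-\bar\alpha\|>R/\sqrt n}\exp(-n(1-L)(1-o(1))D(\alpha,\bar\alpha)),$$
which I would split into a near-$\bar\alpha$ piece (converted via the quadratic expansion of $D$ into a truncated Gaussian integral that vanishes as $R\to\infty$ uniformly in $n$) and a far piece (bounded by $\exp(-cn)$ using compactness of the simplex and strict positivity of $D$ away from $\bar\alpha$).

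For the divergence case $m>1$, pick $\alpha^*\in\Delta_{s^2}(\pi)$ with $\delta_0 := (\alpha^*-\bar\alpha)^\top A(\alpha^*-\bar\alpha) - D(\alpha^*,\bar\alpha) > 0$, and fix a small neighborhood $U$ of $\alpha^*$ on which the gap is at least $\delta_0/2$. A local CLT for the multinomial (or a single-term Stirling estimate on $N$ lying on a fixed vertex of the lattice inside $U$) gives $\prob{N/n\in U}\geq n^{-O(1)}\exp(-n\sup_{\alpha\in U}D(\alpha,\bar\alpha))$, while on that event $\exp(Y^\top A Y)\geq \exp(n\inf_{\alpha\in U}(\alpha-\bar\alpha)^\top A(\alpha-\bar\alpha))$, so multiplying produces $\EE[\one_{\Omega_n}\exp(Y^\top A Y)]\geq n^{-O(1)}\exp(n\delta_0/2-o(n))\to\infty$. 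The only genuinely delicate step in the entire argument is the uniform-in-$n$ tail bound in the convergence case: interpolating between the local-CLT regime $\|\alpha-\bar\alpha\|\lesssim 1/\sqrt n$ and the genuinely large-deviation regime without losing the strict margin $1-m > 0$.
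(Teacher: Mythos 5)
This proposition is quoted from \citet{bmnn} without proof, so there is no in-paper argument to compare against. Your overall skeleton — truncate at radius $R$, apply the CLT plus continuous mapping inside, a method-of-types/Stirling bound outside, and for $m>1$ lower-bound on a single type class near a maximizer — is the natural approach, and the divergence half looks sound. But there is a genuine gap in the convergence half, at precisely the step you flag as delicate.

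You assert that the tangent space to $\Delta_{s^2}(\pi)$ at $\bar\alpha$ is ``precisely the support of $Z$.'' This is wrong. The support of $Z\sim\cN(0,\diag(\bar\alpha)-\bar\alpha\bar\alpha^\top)$ is the range of the covariance, which (when all entries of $\bar\alpha$ are positive) is the full hyperplane $\{v:\sum_k v_k=0\}$ of dimension $s^2-1$; the tangent space to $\Delta_{s^2}(\pi)$ (matrices with all row and column sums zero) has dimension only $(s-1)^2$, strictly smaller for $s\ge2$. With $\eta_n=\sqrt n\log n$, the event $\Omega_n$ merely bounds the row/column sums of $Y$ by $\log n\to\infty$, which is asymptotically no constraint, so in the limit $Z$ really occupies the larger hyperplane. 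Consequently, Taylor-expanding $D$ along $\Delta_{s^2}(\pi)$ together with $m<1$ yields $2v^\top Av<v^\top\diag(\bar\alpha)^{-1}v$ only for $v$ in the tangent space, not on all of $\{\sum_k v_k=0\}$, so neither the finiteness of $\EE\exp(Z^\top AZ)$ nor the uniform-in-$n$ near-$\bar\alpha$ tail bound is justified. Likewise, your ``$1+o(1)$ Lipschitz'' transfer from $\Delta_{s^2}(\pi)$ to nearby $\alpha=N/n$ fails in the near-$\bar\alpha$ regime, where both $\|\alpha-\bar\alpha\|$ and the distance of $\alpha$ to $\Delta_{s^2}(\pi)$ are of the same order $O(\log n/\sqrt n)$, so the relative distortion of both the quadratic form and $D$ is $\Theta(1)$ there. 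Closing the gap requires arguing separately that $A$ is suitably dominated on the orthogonal complement of the tangent space inside $\{\sum_k v_k=0\}$. For the specific $A=\tfrac{\lambda^2}{2}\beta\beta^\top$ with $\beta_{ab}=ab$ and mean-zero, unit-variance $\pi$ used in this paper, this does hold: the Rayleigh quotient $2v^\top Av/v^\top\diag(\bar\alpha)^{-1}v$ over $\{\sum_k v_k=0\}$ is maximized at $v=\diag(\bar\alpha)\beta$, which lies in the tangent space exactly because $\pi$ has mean zero, so the two suprema coincide. But that is a structural feature of this $A$ and $\pi$, not a consequence of $m<1$ for ``any $s^2\times s^2$ matrix $A$,'' and your argument needs to record it (or an equivalent hypothesis on $A$) explicitly.
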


We apply Proposition~\ref{prop:nn} to our specific choice of $\pi$ and $A$:
\begin{theorem}[conditioning method]
\label{thm:cond-method-app}
Let $\mathcal{X} = \mathrm{iid}(\pi)$ where $\pi$ has mean zero, unit variance, and finite support $\Sigma \subseteq \mathbb{R}$ with $|\Sigma| = s$. Let $Q_n = \GWig_n(\lambda,\cX)$, $\tilde Q_n = \GWig_n(\lambda,\tilde \cX)$, and $P_n = \GWig_n(0)$. Define the $s \times s$ matrix $\beta_{ab} = ab$ for $a,b \in \Sigma$. Let
$$\overline{\lambda}_\cX = \left[\sup_{\alpha \in \Delta_{s^2}(\pi)} \frac{\langle \alpha,\beta \rangle^2}{2D(\alpha,\bar\alpha)}\right]^{-1/2}. $$
If $\lambda < \overline{\lambda}_\cX$ then
$\lim_{n \to \infty} \EE_{P_n}(\ddflat[\tilde Q_n]{P_n})^2 = (1-\lambda^2)^{-1/2} < \infty$
and so $Q_n \contig P_n$. Conversely, if $\lambda > \overline{\lambda}_\cX$ then
$\lim_{n \to \infty} \EE_{P_n}\left(\ddflat[\tilde Q_n]{P_n}\right)^2 = \infty.$
\end{theorem}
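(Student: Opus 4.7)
The plan is to apply Proposition~\ref{prop:nn} directly to the matrix $A_{ab,a'b'} = \tfrac{\lambda^2}{2}\,aba'b'$ identified in the setup above. The reduction of the conditional second moment to $(1+o(1))\,\EE[\one_{\Omega_n}\exp(Y^\top A Y)]$ has already been carried out in the excerpt, so it remains only to evaluate the parameter $m$ from Proposition~\ref{prop:nn} and---when $m<1$---the limit value $\EE\exp(Z^\top A Z)$.

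The key observation is that $A$ is rank one: writing $\beta \in \RR^{s^2}$ for the vector with entries $\beta_{ab}=ab$, we have $A = \tfrac{\lambda^2}{2}\,\beta \beta^\top$. Because $\pi$ is mean zero, $\langle \bar\alpha,\beta\rangle = (\sum_a a\pi_a)^2 = 0$, so for every $\alpha \in \Delta_{s^2}(\pi)$,
\begin{equation*}
(\alpha - \bar\alpha)^\top A (\alpha - \bar\alpha) \;=\; \tfrac{\lambda^2}{2}\,\langle \alpha - \bar\alpha,\beta\rangle^2 \;=\; \tfrac{\lambda^2}{2}\,\langle \alpha,\beta\rangle^2.
\end{equation*}
Substituting into the definition of $m$ from Proposition~\ref{prop:nn} gives $m = \lambda^2/\overline{\lambda}_\cX^2$. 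Hence $\lambda < \overline{\lambda}_\cX$ is exactly $m<1$, yielding contiguity through Proposition~\ref{prop:nn} combined with Lemma~\ref{lem:cond} (since $\Omega_n$ has probability $1-o(1)$ under $Q_n$); and $\lambda > \overline{\lambda}_\cX$ gives $m>1$ and the divergence conclusion.

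To identify the finite limit, set $U = \langle Z,\beta\rangle$ with $Z \sim \cN(0,\diag(\bar\alpha) - \bar\alpha\bar\alpha^\top)$, so that $Z^\top A Z = \tfrac{\lambda^2}{2}\,U^2$. Then $U$ is a centered Gaussian with variance
\begin{equation*}
\beta^\top \diag(\bar\alpha)\beta - \langle \bar\alpha,\beta\rangle^2 \;=\; \sum_{a,b}(ab)^2\,\pi_a\pi_b \;=\; \Bigl(\sum_a a^2\pi_a\Bigr)^{\!2} \;=\; 1,
\end{equation*}
using $\EE[\pi]=0$ and $\mathrm{Var}(\pi)=1$. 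Thus $U \sim \cN(0,1)$ and the $\chi_1^2$ moment-generating function yields $\EE\exp(\tfrac{\lambda^2}{2}U^2) = (1-\lambda^2)^{-1/2}$, matching the claimed limit (and incidentally forcing $\overline{\lambda}_\cX \le 1$, consistent with the spectral threshold). I expect no substantive obstacle: Proposition~\ref{prop:nn} does the heavy lifting and all that remains is the rank-one algebra and a two-line Gaussian computation; the only places where mean-zero and unit-variance actually feed in are the kernel identity $\langle\bar\alpha,\beta\rangle=0$ and the variance identity $\beta^\top\diag(\bar\alpha)\beta=1$.
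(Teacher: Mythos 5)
Your proposal is correct and follows essentially the same route as the paper: both reduce to Proposition~\ref{prop:nn} applied to $A=\tfrac{\lambda^2}{2}\beta\beta^\top$, both use $\langle\bar\alpha,\beta\rangle=0$ (equivalently, $\bar\alpha$ lies in the kernel of $A$) to read off $m=\lambda^2/\overline{\lambda}_\cX^2$, and both identify the limit via $\langle Z,\beta\rangle\sim\cN(0,1)$ and the $\chi_1^2$ moment-generating function. Your rank-one observation is a cleaner way of expressing the algebra the paper carries out, but it is not a different argument.
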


\noindent Note that this is a tight characterization of when the conditional second moment is bounded, but not necessarily of when contiguity holds.

The intuition behind this matrix optimization problem is the following. The matrix $\alpha$ represents the `type' of a pair of spikes $(x,x')$ in the sense that for any $a,b \in \Sigma$, $\alpha_{ab}$ is the fraction of entries $i$ for which $x_i = a$ and $x'_i = b$. A pair $(x,x')$ of type $\alpha$ contributes the value $\exp(\frac{n \lambda^2}{2} \langle \alpha,\beta \rangle^2)$ to the second moment $\EE_{x,x'} \exp\left(\frac{n\lambda^2}{2} \langle x,x' \rangle^2\right)$. The probability (when $x,x' \sim \IID(\pi/\sqrt{n})$) that a particular type $\alpha$ occurs is asymptotically $\exp(-n D(\alpha,\bar\alpha))$. Due to the exponential scaling, the second moment is dominated by the worst $\alpha$ value: the second moment is unbounded if there is some $\alpha$ such that $\frac{\lambda^2}{2} \langle \alpha,\beta \rangle^2 > D(\alpha,\bar\alpha)$. (This idea is often referred to as Laplace's method or the saddle point method.) Rearranging this yields the optimization problem in the theorem. The fact that we are conditioning on `good' values of $x$ (that have close-to-typical proportions of entries) allows us to add the constraint $\alpha \in \Delta_{s^2}(\pi)$. If we were not conditioning, we would have the same optimization problem over $\alpha \in \Delta_{s^2}$ (the simplex of dimension $s^2$), which in some cases gives a worse threshold.

Unfortunately we do not have a good general technique to understand the value of the matrix optimization problem. However, in certain special cases we do. Namely, in Appendix~\ref{app:sparse-rad} we show, for the sparse Rademacher prior, how to use symmetry to reduce the problem to only two variables so that it can be easily solved numerically. In other applications, closed form solutions to related optimization problems have been found \citep{an-chrom,bmnn}.

Above we have computed the limit value of the second moment in the case $\lambda < \overline{\lambda}_\cX$ as follows. Defining $Z$ as in Proposition~\ref{prop:nn} we have $\langle Z, \beta \rangle \sim \cN(0,\sigma^2)$ where
$$
\sigma^2 
= \beta^\top (\diag(\bar\alpha) - \bar\alpha \bar\alpha^\top)\beta 
= \sum_{ab} \beta_{ab}^2 \bar\alpha_{ab} + \left(\sum_{ab} \beta_{ab} \bar\alpha_{ab} \right)^2 $$
$$ = \left(\sum_a a^2 \pi_a\right)\left(\sum_b b^2 \pi_b\right) + \left(\sum_a a \pi_a \sum_b b \pi_b \right)^2 = 1, $$
since $\pi$ is mean-zero and unit-variance, and so
$$\EE[\exp(Z^\top A Z)] = \EE\left[\exp\left(\frac{\lambda^2}{2}\langle Z, \beta \rangle^2\right)\right] = \EE\left[\exp\left(\frac{\lambda^2}{2}\chi_1^2\right)\right] = (1-\lambda^2)^{-1/2}.$$

\section{Sparse Rademacher prior}
\label{app:sparse-rad}

In this section we give details for our results on the spiked Gaussian Wigner model with the \iid sparse Rademacher prior: $\IID(\pi/\sqrt{n})$ where $\pi = \sqrt{1/\rho}\,\mathcal{R}(\rho)$ where $\mathcal{R}(\rho)$ is the sparse Rademacher distribution with sparsity $\rho \in (0,1]$:
$$\mathcal{R}(\rho) = \left\{\begin{array}{ccc} 0 & \text{w.p.} & 1-\rho \\ +1 & \text{w.p.} & \rho/2 \\ -1 & \text{w.p.} & \rho/2 \end{array}\right..$$
First we apply the subgaussian method (Theorem~\ref{thm:subg-iid}). The subgaussian constant $\sigma^2$ for $\pi$ needs to satisfy
\begin{equation} \exp\left(\frac{1}{2} \sigma^2 t^2\right) \ge \EE \exp(t \pi) = 1-\rho + \rho \cosh(t/\sqrt{\rho}) \label{eq:sprad-mgf-compare}\end{equation}
for all $t \in \mathbb{R}$ so the best (smallest) choice for $\sigma^2$ is
$$(\sigma^*)^2 \defeq \sup_{t \in \mathbb{R}} \frac{2}{t^2}\log\left[1-\rho + \rho \cosh(t/\sqrt{\rho}) \right].$$

\noindent Recall that Theorem~\ref{thm:subg-iid} (subgaussian method) gives contiguity for all $\lambda < 1/\sigma^*$. We now show that for sufficiently large $\rho$, we have $\sigma^* = 1$, implying that PCA is tight:
\begin{proposition}
When $\rho \geq 1/3$, we have $\sigma^* = 1$, yielding contiguity for all $\lambda < 1$. On the other hand, if $\rho < 1/3$, then $\sigma^* > 1$.
\end{proposition}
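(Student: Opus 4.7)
My plan is to rewrite $(\sigma^*)^2 = \sup_{t \neq 0} g(t)$ where $g(t) = (2/t^2) K(t)$ and $K(t) = \log[1-\rho + \rho\cosh(t/\sqrt{\rho})]$ is the cumulant generating function of $\pi$. Since $\pi$ has mean zero and unit variance, $K(t) = t^2/2 + O(t^4)$ as $t \to 0$, so $g(t) \to 1$ and hence $(\sigma^*)^2 \geq 1$ for any $\rho \in (0,1]$. The task therefore reduces to deciding whether $g(t) \leq 1$ globally, equivalently whether the MGF inequality $1-\rho+\rho\cosh(t/\sqrt{\rho}) \leq e^{t^2/2}$ holds for all $t \in \mathbb{R}$.

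For the easy direction $\rho < 1/3$, I would Taylor expand at $t=0$ using the fourth cumulant of $\pi$: since $\EE[\pi^4] = 1/\rho$, we have $\kappa_4(\pi) = 1/\rho - 3 > 0$, whence
\[
K(t) = \frac{t^2}{2} + \frac{1/\rho - 3}{24}\, t^4 + O(t^6), \qquad g(t) = 1 + \frac{1/\rho - 3}{12}\, t^2 + O(t^4).
\]
For sufficiently small nonzero $t$ this forces $g(t) > 1$, so $(\sigma^*)^2 > 1$.

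For $\rho \geq 1/3$, I would prove the MGF inequality coefficient-by-coefficient in the even power series. Expanding,
\[
1 - \rho + \rho \cosh(t/\sqrt{\rho}) = 1 + \sum_{k \geq 1} \frac{t^{2k}}{\rho^{k-1}(2k)!}, \qquad e^{t^2/2} = \sum_{k \geq 0} \frac{t^{2k}}{2^k\, k!}.
\]
Term-by-term dominance requires $\rho^{k-1} \geq \tfrac{2^k k!}{(2k)!} = \tfrac{1}{(2k-1)!!}$ for every $k \geq 1$. For $k=1$ this is $1 \geq 1$; for $k \geq 2$ one has $(2k-1)!! = 3 \cdot 5 \cdots (2k-1) \geq 3^{k-1}$, so it suffices that $\rho^{k-1} \geq 3^{-(k-1)}$, i.e., $\rho \geq 1/3$. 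Thus $e^{t^2/2}$ dominates term-by-term, so $K(t) \leq t^2/2$ for all $t$, giving $(\sigma^*)^2 \leq 1$; combined with the lower bound above, $\sigma^* = 1$, and Theorem~\ref{thm:subg-iid} then supplies contiguity for all $\lambda < 1$.

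The only mildly delicate step is the double-factorial estimate $(2k-1)!! \geq 3^{k-1}$; everything else is a direct cumulant or coefficient computation. I expect no serious obstacle, only the need to identify $1/3$ as the exact threshold through the fourth cumulant $\kappa_4 = 1/\rho - 3$, whose sign change at $\rho = 1/3$ drives both halves of the proof.
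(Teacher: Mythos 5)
Your proof is correct. The easy direction ($\rho < 1/3$, showing $\sigma^* > 1$) uses the same idea as the paper --- the sign of the fourth cumulant $\kappa_4(\pi) = 1/\rho - 3$ --- though you state it more carefully; the paper's corresponding line ``$k^{(4)}_\rho(0) = 3 - 1/\rho < 0$'' appears to have a sign typo (the CGF's fourth derivative is $\kappa_4 = 1/\rho - 3$, which is \emph{positive} for $\rho < 1/3$, and this positivity is what makes the MGF inequality fail near $t = 0$).

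For the hard direction ($\rho \geq 1/3$), you take a genuinely different route. The paper reduces the desired inequality $k_\rho(t) \le t^2/2$ to the second-derivative bound $k_\rho''(t) \le 1$, which it establishes by rewriting $k_\rho''$ as a rational expression in $\cosh(t/\sqrt{\rho})$ and completing the square; the threshold $1/3$ appears through the sign of a quantity $(2\rho-1)(1-\rho)/(2\rho) + \rho$. You instead expand both sides of $1-\rho+\rho\cosh(t/\sqrt{\rho}) \le e^{t^2/2}$ as even power series in $t$ and verify coefficient-by-coefficient dominance: the coefficient inequality boils down to $\rho^{k-1} \ge 1/(2k-1)!!$, and the elementary bound $(2k-1)!! \ge 3^{k-1}$ makes $\rho \ge 1/3$ manifestly sufficient, with equality at $k=2$ showing why $1/3$ is the exact cutoff. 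Both arguments are complete; yours is arguably cleaner and makes the origin of $1/3$ more transparent (the $k=2$ coefficient is where the series comparison is tight, which is the power-series shadow of the fourth-cumulant computation used in the other direction), whereas the paper's is shorter on the page but leans on a less self-explanatory algebraic manipulation of $k_\rho''$.
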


\begin{proof}
We equivalently consider the following reformulation of (\ref{eq:sprad-mgf-compare}):
\begin{equation} \frac12 \sigma^2 t^2 \stackrel{?}{\geq} \log\left( 1-\rho + \rho \cosh(t/\sqrt{\rho}) \right) \defeq k_\rho(t). \label{eq:sprad-cgf-compare} \end{equation}
Both sides of the inequality are even functions of $t$, agreeing in value at $t=0$. When $\sigma^2 < 1$, the inequality fails, by comparing their second-order behavior about $t=0$. When $\sigma^2 = 1$ but $\rho < 1/3$, the inequality fails, as the two sides have matching behavior up to third order, but $k^{(4)}_\rho(0) = 3 - 1/\rho < 0$.

It remains to show that the inequality (\ref{eq:sprad-cgf-compare}) does hold for $\rho > 1/3$ and $\sigma^2 = 1$. As the left and right sides agree to first order at $t=0$, and are both even functions, it suffices to show that for all $t \geq 0$,
$$ 1 \stackrel{?}{\geq} k_\rho''(t) = \frac{\rho + (1-\rho) \cosh(t/\sqrt{\rho})}{(1-\rho+\rho \cosh(t/\sqrt{\rho}))^2}. $$
Completing the square for $\cosh$, we have the equivalent inequality:
$$ 0 \stackrel{?}{\leq} 1 - 3\rho + \rho^2 + \Big( \underbrace{\rho \cosh(t/\sqrt{\rho}) + \frac{(2\rho-1)(1-\rho)}{2\rho}}_{(*)} \Big)^2 - \frac{(2\rho - 1)^2(1-\rho)^2}{4\rho^2}. $$
Note that $\cosh$ is bounded below by $1$; thus for $\rho > 1/3$, the underbraced term ($*$) is nonnegative, and hence minimized in absolute value when $t=0$. It then suffices to show the above inequality in the case $t=0$, so that $\cosh(t/\sqrt{\rho}) = 1$; but here the inequality is an equality, by simple algebra.
\end{proof}

Using the conditioning method of Section~\ref{sec:cond-method}, we will now improve the range of $\rho$ for which PCA is optimal, although our argument here relies on numerical optimization.

\begin{example}\label{ex:sparse-rad}
Let $\cX$ be the sparse Rademacher prior $\IID(\sqrt{1/\rho} \,\mathcal{R}(\rho))$. There exists a critical value $\rho^* \approx 0.184$ (numerically computed) such that if $\rho \ge \rho^*$ and $\lambda < 1$ then $\GWig(\lambda,\cX)$ is contiguous to $\GWig(0,\cX)$. When $\rho < \rho^*$ we are only able to show contiguity when $\lambda < \lambda^*_\rho$ for some $\lambda^*_\rho < 1$.
\end{example}

\begin{proof}[Details]
Consider the optimization problem of Theorem~\ref{thm:cond-method} (conditioning method). We will first use symmetry to argue that the optimal $\alpha$ must take a simple form. Abbreviate the support of $\pi$ as $\{0,+,-\}$. For a given $\alpha$ matrix, define its complement by swapping $+$ and $-$, e.g.\ swap $\alpha_{0+}$ with $\alpha_{0-}$ and swap $\alpha_{-+}$ with $\alpha_{+-}$. Note that if we average $\alpha$ with its complement, the numerator $\langle \alpha,\beta \rangle^2$ remains unchanged, the denominator $D(\alpha,\bar\alpha)$ can only decrease, and the row- and column-sum constraints remain satisfied; this means the new solution is at least as good as the original $\alpha$. Therefore we only need to consider $\alpha$ values satisfying $\alpha_{++} = \alpha_{--}$ and $\alpha_{+-} = \alpha_{-+}$. Note that the remaining entries of $\alpha$ are uniquely determined by the row- and column-sum constraints, and so we have reduced the problem to only two variables. It is now easy to solve the optimization problem numerically, say by grid search.
\end{proof}

\section{Proof of non-Gaussian Wigner lower bound}\label{app:nong-lower}

In this section we prove Theorem~\ref{thm:nongauss-lower}, and verify its hypotheses for spherical and \iid priors.

\begin{reptheorem}{thm:nongauss-lower}
Under Assumption~\ref{as:nong-lower}, $\Wig(\lambda,\cP,\cP_d,\cX)$ is contiguous to $\Wig(0,\cP,\cP_d)$ for all $\lambda < \lambda^*_\cX/\sqrt{F_\cP}$.
\end{reptheorem}

\begin{proof}
We begin by conditioning the prior $\cX$ on the high-probability events that $\|x\|_q^q \leq \alpha_q n^{\frac1q - \frac12}$ for $q=2,4,6,8$, and on the event that no entry of $x$ exceeds $5 \sqrt{\log n / n}$, which is true with high probability by the subgaussian hypothesis; let $\tilde\cX$ be this conditioned prior. Hence if $\Wig(\lambda,\cP,\tilde\cX)$ is contiguous to $\Wig(0,\cP)$ then so is $\Wig(\lambda,\cP,\cP_d,\cX)$. Let $Q_n = \Wig_n(\lambda,\cP,\cP_d,\cX)$, $\tilde Q_n = \Wig_n(\lambda,\cP,\cP_d,\tilde\cX)$, and $P_n = \Wig_n(0,\cP,\cP_d)$.

For convenience, let $p_{ij}$ denote $p$ if $i\neq j$ and $p_d$ if $i = j$, the density of the noise on the $ij$ entry. Likewise let $\tau_{ij}$ denote $\tau$ or $\tau_d$ as appropriate. We proceed from the second moment:
\begin{align*}
\Ex_{P_n} & \left(\dd[\tilde Q_n]{P_n}\right)^2
= \Ex_{Y \sim P_n}\left[ \Ex_{x,x' \sim \tilde\cX} \prod_{i \leq j} \frac{p_{ij}(\sqrt{n} Y_{ij} - \lambda \sqrt{n} x_i x_j)}{p_{ij}(\sqrt{n} Y_{ij})} \frac{p_{ij}(\sqrt{n} Y_{ij} - \lambda \sqrt{n}  x_i' x_j')}{p_{ij}(\sqrt{n} Y_{ij})} \right] \\
&= \Ex_{x,x' \sim \tilde\cX}\left[ \prod_{i \leq j} \Ex_{\sqrt{n} Y_{ij} \sim \cP} \frac{p_{ij}(\sqrt{n} Y_{ij} - \lambda\sqrt{n} x_i x_j)}{p_{ij}(\sqrt{n} Y_{ij})} \frac{p_{ij}(\sqrt{n} Y_{ij} - \lambda \sqrt{n} x_i' x_j')}{p_{ij}(\sqrt{n} Y_{ij})} \right] \\
&= \Ex_{x,x' \sim \tilde\cX}\left[ \exp\left( \sum_{i \leq j} \tau_{ij}(\lambda \sqrt{n} x_i x_j, \lambda \sqrt{n} x_i' x_j') \right) \right].
\end{align*}

We will expand $\tau$ and $\tau_d$ using Taylor's theorem, using the $C^4$ assumption:
\begin{align*}
\tau(a,b) =& \sum_{0 \leq k+\ell \leq 3} \frac{1}{(k+\ell)!} \frac{\partial^{k+\ell} \tau}{\partial a^k \partial b^\ell}(0,0)\; a^k b^\ell \\
&\quad + \sum_{k+\ell=4} \frac{1}{4!} \left( \frac{\partial^4 \tau}{\partial a^k \partial b^\ell}(0,0) + h_{k,\ell}(a,b) \right) a^k b^\ell
\end{align*}
\begin{align*}
\tau_d(a,b) =& \sum_{0 \leq k+\ell \leq 1} \frac{1}{(k+\ell)!} \frac{\partial^{k+\ell} \tau}{\partial a^k \partial b^\ell}(0,0)\; a^k b^\ell \\
&\quad + \sum_{k+\ell=2} \frac{1}{2!} \left( \frac{\partial^2 \tau}{\partial a^k \partial b^\ell}(0,0) + h_{d;k,\ell}(a,b) \right) a^k b^\ell
\end{align*}
for some remainder function $h_{k,\ell}(a,b)$ tending to $0$ as $(a,b) \to (0,0)$.
As $x$ and $x'$ are entrywise $O(\sqrt{\log n / n})$, these remainder terms $h_{k,\ell}(\lambda\sqrt{n} x_i x_j, \lambda\sqrt{n} x_i' x_j')$ are $o(1)$ as $n \to \infty$.
Note that $\tau(a,0) = 0 = \tau(0,b)$, so that the non-mixed partials of $\tau$ vanish, and likewise for $\tau_d$. We note also that $\frac{\partial^2 \tau}{\partial a \partial b}(0,0) = F_\cP$, the Fisher information defined above. Thus,
\begin{align*}
\Ex_{P_n} \left(\dd[\tilde Q_n]{P_n}\right)^2 =& \hspace{-2pt} \Ex_{x,x' \sim \tilde \cX}\bigg[ \exp\bigg( F_\cP \lambda^2 n \sum_{i < j} x_i x_j x_i' x_j' \\
&+ \sum_{\substack{k+\ell=3 \\ k,\ell>0}} \frac{\partial^3 \tau}{\partial a^k \partial b^\ell}(0,0) \frac{\lambda^3 n^{3/2}}{k!\ell!} \sum_{i < j} x_i^k x_j^k (x_i')^\ell (x_j')^\ell \\
&+ \sum_{k+\ell=4} \left( \frac{\partial^4 \tau}{\partial a^k \partial b^\ell}(0,0) + o(1) \right) \frac{\lambda^4 n^2}{k!\ell!} \sum_{i < j} x_i^k x_j^k (x_i')^\ell (x_j')^\ell \\
&+ \left( \frac{\partial^2 \tau_d}{\partial a \partial b}(0,0) + o(1) \right) \lambda^2 n \sum_{i} x_i^2 (x_i')^2
\quad \bigg) \bigg].
\end{align*}
We can separate these four terms using a weighted AM--GM inequality. For all $\eps > 0$:
\begin{align*}
\Ex_{P_n} \left(\dd[\tilde Q_n]{P_n}\right)^2 &\leq \Ex_{x,x' \sim \tilde \cX} \exp\left( (1-\eps)^{-1} F_\cP \lambda^2 n \sum_{i < j} x_i x_j x_i' x_j' \right) \numberthis\label{eq:term1}\\
&+ \sum_{\substack{k+\ell=3 \\ k,\ell>0}} \Ex_{x,x' \sim \tilde \cX} \exp\left( \frac{8}{\eps} \frac{\partial^3 \tau}{\partial a^k \partial b^\ell}(0,0) \frac{\lambda^3 n^{3/2}}{k!\ell!} \sum_{i < j} x_i^k x_j^k (x_i')^\ell (x_j')^\ell \right) \numberthis\label{eq:term2}\\
&+ \sum_{k+\ell=4} \Ex_{x,x' \sim \tilde \cX} \exp\left( \frac{8}{\eps} \left( \frac{\partial^4 \tau}{\partial a^k \partial b^\ell}(0,0) + o(1) \right) \frac{\lambda^4 n^2}{k!\ell!} \sum_{i < j} x_i^k x_j^k (x_i')^\ell (x_j')^\ell \right) \numberthis\label{eq:term3}\\
&+ \Ex_{x,x' \sim \tilde \cX} \exp\left( \frac{8}{\eps} \left( \frac{\partial^2 \tau_d}{\partial a \partial b}(0,0) + o(1) \right) \lambda^2 n \sum_{i} x_i^2 (x_i')^2 \right) \numberthis\label{eq:term4}
\end{align*}
so it suffices to control terms (\ref{eq:term1}--\ref{eq:term4}) individually.

By hypothesis, $\lambda < \lambda^*_\cX / \sqrt{F_\cP}$, implying that we can choose $\eps > 0$ such that $(1-\eps)^{-1} F_\cP \lambda^2 < (\lambda^*_\cX)^2$. But $\tilde\cX$ is dominated as a measure by $(1+o(1)) \cX$; it follows that $\lambda_\cX \leq \lambda_{\tilde \cX}$, and the first term (\ref{eq:term1}) is bounded.

We bound the second term (\ref{eq:term2}) using the subgaussian assumption:
\begin{align*}
(\ref{eq:term2}) &\leq 2 \Ex_{x,x' \sim \tilde \cX} \exp\left( \frac{2\lambda^3 n^{3/2}}{\eps} \frac{\partial^3 \tau}{\partial a^2 \partial b}(0,0) \langle x^2, x' \rangle^2 \right) \\
&= 2 \EE_{x \sim \tilde\cX} \EE_{x' \sim \tilde\cX} \exp(\langle v,x' \rangle^2) \\
&= 2 \EE_{x \sim \tilde\cX} (1+o(1)) \EE_{x' \sim \cX} \exp(\langle v,x' \rangle^2)
\end{align*}
where $v = \sqrt{2/\eps} \lambda^{3/2} n^{3/4} \sqrt{\frac{\partial^3 \tau}{\partial a^2 \partial b}(0,0)}\, x^2$. We thus have
$$ \|v\|_2^2 = \frac{2 \lambda^3 n^{3/2}}{\eps} \frac{\partial^3 \tau}{\partial a^2 \partial b}(0,0) \|x\|_4^4 = O(n^{1/2}). $$
By subgaussian hypothesis on $\cX$, the inner expectation over $x'$ is $O(1)$, so that the overall term (\ref{eq:term2}) is bounded.

We bound the third term (\ref{eq:term3}) using Cauchy--Schwarz:
\begin{align*}
(\ref{eq:term3}) &\leq \sum_{k+\ell=4} \Ex_{x,x' \sim \tilde\cX} \exp\left( \left( \frac{\partial^4 \tau}{\partial a^k \partial b^\ell}(0,0) + o(1) \right) \frac{8 \lambda^4 n^2}{2\eps k! \ell!} \langle x^k, (x')^\ell \rangle^2 \right) \\
&\leq \sum_{k+\ell=4} \Ex_{x,x' \sim \tilde\cX} \exp\left( \left( \frac{\partial^4 \tau}{\partial a^k \partial b^\ell}(0,0) + o(1) \right) \frac{8 \lambda^4 n^2}{2\eps k! \ell!} \|x^k\|_2^2\; \|(x')^\ell\|_2^2 \right) \\
&= \sum_{k+\ell=4} \Ex_{x,x' \sim \tilde\cX} \exp\left( \left( \frac{\partial^4 \tau}{\partial a^k \partial b^\ell}(0,0) + o(1) \right) \frac{8\lambda^4 n^2}{2\eps k! \ell!} \|x\|_{2k}^{2k}\; \|x'\|_{2\ell}^{2\ell} \right) \\
&\leq \sum_{k+\ell=4} \Ex_{x,x' \sim \tilde\cX} \exp\left( \left( \frac{\partial^4 \tau}{\partial a^k \partial b^\ell}(0,0) + o(1) \right) \frac{8\lambda^4 n^2}{2\eps k! \ell!} \alpha_{2k}^{2k} n^{1-k} \alpha_{2\ell}^{2\ell} n^{1-\ell} \right) \\
&= \sum_{k+\ell=4} \exp\left( \left( \frac{\partial^4 \tau}{\partial a^k \partial b^\ell}(0,0) + o(1) \right) \frac{8\lambda^4}{2\eps k! \ell!} \alpha_{2k}^{2k} \alpha_{2\ell}^{2\ell} \right),
\end{align*}
due to the norm restrictions on prior $\tilde\cX$. This evidently remains bounded as $n \to \infty$.

The fourth term proceeds similarly:
\begin{align*}
(\ref{eq:term4}) &\leq \Ex_{x,x' \sim \tilde\cX} \exp\left(  \frac{8 \lambda^2 n}{\eps} \left( \frac{\partial^2 \tau_d}{\partial a \partial b}(0,0) + o(1) \right) \langle x^2, (x')^2 \rangle \right) \\
&\leq \Ex_{x,x' \sim \tilde\cX} \exp\left(  \frac{8 \lambda^2 n}{\eps} \left( \frac{\partial^2 \tau_d}{\partial a \partial b}(0,0) + o(1) \right) \|x\|_4^2 \|x'\|_4^2 \rangle \right) \\
&\leq \Ex_{x,x' \sim \tilde\cX} \exp\left(  \frac{8 \lambda^2 n}{\eps} \left( \frac{\partial^2 \tau_d}{\partial a \partial b}(0,0) + o(1) \right) \alpha_4^4 n^{-1} \right)
\end{align*}
which likewise remains bounded.

With the overall second moment $\Ex_{P_n} \left(\dd[\tilde Q_n]{P_n}\right)^2$ bounded as $n \to \infty$, the result follows from Lemma~\ref{lem:cond}.
\end{proof}

\begin{repproposition}{prop:nong-lower-spherical}
Conditions (i) and (ii) in Assumption~\ref{as:nong-lower} are satisfied for the spherical prior $\cXs$.
\end{repproposition}
\begin{proof}
Note that one can sample $x \sim \cXs$ by first sampling $y \sim \N(0,I_n)$ and then taking $x = y/\|y\|_2$. By Chebyshev, $\left| \|y\|_2^2 - n\right| < n^{3/4}$ with probability $1-o(1)$.
For $q \in \{4,6,8\}$, $\|y\|_q^q$ has expectation $n(q-1)!!$ and variance $$n[(2q-1)!!-((q-1)!!)^2].$$ Supposing that $\|y\|_2^2 > n - n^{3/4} > n/2$, which occurs with probability $1-o(1)$, we have for any $\alpha_q$ that
\begin{align*}
\Pr[\|x_q\| > \alpha_q n^{\frac1q-\frac12}] &= \Pr[\|x\|_q^q > \alpha_q^q n^{1-\frac{q}2}] \\
&= \Pr[\|y\|_q^q > \alpha_q^q n^{1-\frac{q}2} \|y\|_2^q ] \\
&\leq \Pr[ \|y\|_q^q > \alpha_q^q 2^{-q/2} n] \\
&\leq \frac{n((2q-1)!! - ((q-1)!!)^2)}{n^2(2^{-q}\alpha_q^{2q} - (q-1)!!)^2},
\end{align*}
by Chebyshev. This probability is $o(1)$ so long as we take $\alpha_q^{2q} > 2^q (q-1)!!$.

The spherical prior is appropriately subgaussian: the inner product $\langle x,v \rangle$ is distributed as $2z-1$ with $z \sim \mathrm{Beta}(n/2,n/2)$, which is known to be $O(1/n)$-subgaussian (see e.g.\ \citet{sam}).
\end{proof}

\begin{repproposition}{prop:nong-lower-iid}
Consider an \iid prior $\cX = \IID(\pi/\sqrt{n})$ where $\pi$ is zero-mean, unit-variance,
and subgaussian with some constant $\sigma^2$. Then conditions (i) and (ii) in Assumption~\ref{as:nong-lower} are satisfied.
\end{repproposition}
\begin{proof}
We have $x_i = \frac{1}{\sqrt n} \pi_i$ where $\pi_i$ are independent copies of $\pi$. For $q \in \{2,4,6,8\}$,
\begin{align*}
\prob{\|x\|_q > \alpha_q n^{\frac{1}{q}-\frac{1}{2}}}
&= \prob{\|x\|_q^q > \alpha_q^q n^{1-\frac{q}{2}}} \\
&= \problr{\sum_i x_i^q > \alpha_q^q n^{1-\frac{q}{2}}} \\
&= \problr{\sum_i \pi_i^q > \alpha_q^q n} \\
&= \problr{\sum_i \pi_i^q - n \EE[\pi^q] > (\alpha_q^q - \EE[\pi^q]) n}. \\
\intertext{Choose $\alpha_q$ so that $C \equiv \alpha_q^q - \EE[\pi^q] > 0$, and apply Chebyshev's inequality:}
&\le \frac{\mathrm{Var}[\sum_i \pi_i^q]}{C^2 n^2} = \frac{n \mathrm{Var}[\pi^q]}{C^2 n^2} = \mathcal{O}(1/n).
\end{align*}
Here we needed $\EE[\pi^{2q}] < \infty$ (which follows from subgaussianity) so that $\mathrm{Var}[\pi^q] < \infty$.
\end{proof}

\section{Non-Gaussian Wigner with discrete noise}
\label{app:nong-discrete}

In this section we show that in the non-Gaussian Wigner model, if the noise distribution has a point mass then the detection problem becomes easy for any $\lambda > 0$.

\begin{theorem}
\label{thm:point-mass}
Let $\mathcal{P}$ be a (mean-zero, unit-variance) distribution on $\RR$ with a point mass: $\mathrm{Pr}_{w \sim \mathcal{P}}[w = c] = m$ for some $c$ and some $m > 0$. Let $\cP_d$ be any distribution on $\RR$. Let $\cX$ be a spike prior such that for some $\delta > 0$ and $\alpha > 0$, with probability $1-o(1)$, $x \sim \cX_n$ satisfies both (i) $\|x\|_0 \ge \delta n$ and (ii) $|x_i| \le n^{-1/4-\alpha}\; \forall i$. Then for any $\lambda > 0$, there exists a test that consistently distinguishes $\Wig(\lambda,\cP,\cP_d,\cX)$ from $\Wig(0,\cP,\cP_d)$.
\end{theorem}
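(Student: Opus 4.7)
The plan is to exploit the point mass at $c$ by counting how many off-diagonal entries of $\sqrt n\, Y$ are exactly equal to $c$. Define the test statistic
$$ N = \bigl| \{ (i,j) : 1 \le i < j \le n,\ \sqrt n\, Y_{ij} = c \} \bigr|. $$
Since $\sqrt n\, Y_{ij} = \lambda \sqrt n\, x_i x_j + W_{ij}$, the event $\{\sqrt n\, Y_{ij} = c\}$ is equivalent to $\{W_{ij} = c - \lambda \sqrt n\, x_i x_j\}$, whose probability is precisely the $\cP$-mass at the single point $c - \lambda \sqrt n\, x_i x_j$.

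First I would compute the expected count in each model. Under $\Wig(0,\cP,\cP_d)$ the off-diagonal entries $W_{ij}$ are \iid\ from $\cP$, so $\EE\, N = m \binom{n}{2}$. Under $\Wig(\lambda,\cP,\cP_d,\cX)$, condition on the high-probability event $E$ that $x$ satisfies both (i) and (ii). Condition (ii) gives the uniform bound $|\lambda \sqrt n\, x_i x_j| \le \eps_n$, where $\eps_n \defeq \lambda n^{-2\alpha} \to 0$. For pairs with $x_i x_j = 0$ the per-entry probability is exactly $m$; for pairs with $x_i x_j \ne 0$ the point $c - \lambda \sqrt n\, x_i x_j$ is distinct from $c$ and lies in the punctured neighborhood $B(c,\eps_n) \setminus \{c\}$, so its $\cP$-mass is at most
$$ M_n \defeq \cP\bigl( B(c,\eps_n) \setminus \{c\} \bigr), $$
where $B(c,\eps_n) = \{y \in \RR : |y-c| < \eps_n\}$. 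Since $B(c,\eps_n) \setminus \{c\}$ decreases to $\emptyset$, continuity from above for the finite measure $\cP$ yields $M_n \to 0$. Combined with (i), which guarantees that at most $\binom{n}{2} - \binom{\lceil \delta n \rceil}{2} = (1-\delta^2+o(1))\binom{n}{2}$ pairs satisfy $x_i x_j = 0$, I obtain
$$ \EE[N \mid x] \;\le\; \bigl( m(1-\delta^2) + M_n + o(1) \bigr) \binom{n}{2} \;\le\; m \bigl(1 - \tfrac{\delta^2}{2}\bigr)\binom{n}{2} $$
for all $n$ large enough, so the two means differ by $\Theta(n^2)$.

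To finish, I would verify concentration. Conditional on $x$, $N$ is a sum of independent Bernoulli indicators since the $W_{ij}$ are independent across $i<j$, so $\mathrm{Var}(N \mid x) \le \EE[N \mid x] = O(n^2)$, and Chebyshev gives $|N - \EE[N\mid x]| \le n \log n$ with probability $1 - o(1)$; the same concentration holds under the null. Thresholding $N$ at $\bigl(1 - \delta^2/4\bigr) m \binom{n}{2}$ then separates the two models with $o(1)$ total error, combining the concentration bound with the $1-o(1)$ probability of $E$.

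The main technical point is the bound $M_n \to 0$, which is what prevents a small nonzero shift $\lambda \sqrt n\, x_i x_j$ from accidentally landing on another fixed atom of $\cP$ and inflating the spiked count back up to $m\binom{n}{2}$. Condition (ii) is exactly what forces these shifts into a vanishing neighborhood of $0$; without it the argument would need additional separation assumptions on the atoms of $\cP$. The remaining steps---counting pairs via (i), and Chebyshev concentration via independence of the noise entries---are essentially routine.
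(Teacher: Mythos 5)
Your proof is correct and follows essentially the same strategy as the paper's: count entries of $\sqrt{n}\,Y$ equal to $c$, use condition (ii) to force the nonzero shifts $\lambda\sqrt{n}\,x_ix_j$ into a shrinking punctured neighborhood of $0$ so continuity of measure sends their hit-probability to zero, use condition (i) to guarantee $\Theta(n^2)$ such pairs, and threshold. The only cosmetic differences are that you restrict to off-diagonal entries and spell out the Chebyshev concentration step, which the paper leaves implicit.
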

\noindent Here, $\|x\|_0$ denotes the $\ell_0$ norm, i.e.\ the number of nonzero entries.
\begin{proof}
Let the test statistic $T(Y)$ be the fraction of entries of $Y$ that are exactly equal to $c/\sqrt{n}$. Under the unspiked model $Y \sim \Wig(0,\cP,\cP_d)$, we have $T(Y) \to m$ in probability. Let $\eps > 0$. Under the spiked model $Y \sim \GWig(\lambda,\cP,\cP_d,\cX)$ we have with probability $1-o(1)$ that at least $(\delta^2-\eps)n^2$ entries of $xx^\top$ lie in the set $[-n^{-1/2-2\alpha},n^{-1/2-2\alpha}] \setminus \{0\}$. With probability $1-o(1)$, at most $\eps n^2$ of the corresponding entries of $Y$ take the value (exactly) $c/\sqrt{n}$ because by continuity of measure,
$$\lim_{d \to 0^+} \mathrm{Pr}_{w \sim \cP}[w \in [c-d,c+d] \setminus \{c\}] = 0.$$
Therefore, taking $\eps$ sufficiently small, we have $T(Y) \le m - \eps$ with probability $1-o(1)$ and thus $T$ consistently distinguishes the spiked and unspiked models.
\end{proof}

\section{Proof of pre-transformed PCA}
\label{app:nong-upper}

In this section we prove our upper bound for the non-Gaussian Wigner model via pre-transformed PCA. We make the following assumptions on the spike prior $\cX$ and the entrywise noise distribution $\cP$.

\begin{repassumption}{as:nong-upper}
Of the prior $\cX$ we require (as usual) $\|x\| \to 1$ in probability, and we also assume that with probability $1-o(1)$, all entries of $x$ are small: $|x_i| \le n^{-1/2 + \alpha}$ for some fixed $\alpha < 1/8$. Of the noise $\cP$, we assume the following:
\begin{enumerate}[label=(\roman*),resume]
\item $\cP$ has a non-vanishing $C^3$ density function $p(w) > 0$,
\item Letting $f(w) = -p'(w)/p(w)$, we have that $f$ and its first two derivatives are polynomially-bounded: there exists $C > 0$ and an even integer $m \ge 2$ such that $|f^{(\ell)}(w)| \le C + w^m$ for all $0 \le \ell \le 2$.
\item With $m$ as in (ii), $\cP$ has finite moments up to $5m$:
$\EE|\cP|^k < \infty$ for all $1 \le k \le 5m$. 
\end{enumerate}
\end{repassumption}

\noindent An important consequence of assumptions (ii) and (iii) is the following.
\begin{lemma}
\label{lem:mom}
$\EE|f^{(\ell)}(\cP)|^q < \infty$ for all $0 \le \ell \le 2$ and $1 \le q \le 5$. Likewise $\EE|f^{(\ell)}(\cP_d)|^q < \infty$ for all $0 \le \ell \le 2$ and $1 \le q \le 3$.
\end{lemma}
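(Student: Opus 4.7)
The plan is that this lemma is an immediate consequence of combining the polynomial envelope assumption on $f$ and its derivatives (Assumption~\ref{as:nong-upper}(iii)) with the moment assumption on $\cP$ (Assumption~\ref{as:nong-upper}(iv)). There is no real obstacle; the work is essentially a one-line estimate.

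First I would fix $0 \le \ell \le 2$ and apply Assumption~\ref{as:nong-upper}(iii), which gives $|f^{(\ell)}(w)| \le C + w^m$ for every $w \in \RR$. Because $m$ is even, $w^m = |w|^m \ge 0$, so this is a genuine upper bound on the absolute value. Raising to the $q$-th power and using the convexity inequality $(a+b)^q \le 2^{q-1}(a^q + b^q)$ valid for $a,b \ge 0$ and $q \ge 1$, I get the pointwise estimate
$$|f^{(\ell)}(w)|^q \;\le\; 2^{q-1}\bigl(C^q + |w|^{mq}\bigr).$$

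Next, I would integrate this bound against $\cP$:
$$\EE\,|f^{(\ell)}(\cP)|^q \;\le\; 2^{q-1}\bigl(C^q + \EE\,|\cP|^{mq}\bigr).$$
For $1 \le q \le 5$ we have $mq \le 5m$, so by Assumption~\ref{as:nong-upper}(iv) the right-hand side is finite, proving the first half of the lemma. For the second half involving $\cP_d$, the identical argument applies, provided the moments $\EE|\cP_d|^k$ are finite up to $k = 3m$ (which one treats as an analogous moment assumption on $\cP_d$); then restricting to $q \le 3$ gives $mq \le 3m$ and the finiteness follows.

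The only ``delicate'' point to keep in mind is that one must verify the polynomial envelope is in terms of $w^m$ with $m$ even so that no absolute-value issues arise when raising to powers, and that the moment index $mq$ used is no larger than the moments assumed to exist. Both are ensured by the precise form of Assumption~\ref{as:nong-upper}, and so the proof is a direct computation with no hidden difficulty.
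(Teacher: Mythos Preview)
Your proof is correct and follows essentially the same route as the paper: bound $|f^{(\ell)}(w)|^q$ pointwise by $2^{q}(C^q + |w|^{mq})$ (the paper uses $2^q$ rather than your sharper $2^{q-1}$) and invoke the moment assumption $\EE|\cP|^{mq} < \infty$ for $mq \le 5m$. One minor point: in the paper's numbering the polynomial envelope is Assumption~\ref{as:nong-upper}(ii) and the moment condition is (iii), not (iii) and (iv) as you wrote, and the paper simply asserts ``$\cP_d$ follows identically'' without stating a separate moment hypothesis on $\cP_d$---your explicit remark that an analogous $3m$-moment assumption on $\cP_d$ is needed is in fact more careful than the paper.
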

\begin{proof}
We demonstrate $\cP$; then $\cP_d$ follows identically. Using $|a+b|^q \le |2a|^q + |2b|^q = 2^q(|a|^q+|b|^q)$ we have
\begin{equation*} \EE|f^{(\ell)}(\cP)|^q \le \EE|C + \cP^m|^q \le 2^q(C^q + \EE|\cP|^{mq}) < \infty. \qedhere\end{equation*}
\end{proof}

The main theorem of this section is the following.
\begin{reptheorem}{thm:nong-upper}
Let $\lambda \ge 0$ and let $\cX,\cP$ satisfy Assumption~\ref{as:nong-upper}. Let $\hat Y = \sqrt{n}\, Y$ where $Y$ is drawn from $\Wig(\lambda,\cP,\cP_d,\cX)$. Let $f(\hat Y)$ denote entrywise application of the function $f(w) = -p'(w)/p(w)$ to $\hat Y$, except we define the diagonal entries of $f(\hat Y)$ to be zero.
\begin{itemize}[leftmargin=*]
\item If $\lambda \le 1/\sqrt{F_\cP}$ then $\frac{1}{\sqrt n}\lambda_{\max}(f(\hat Y)) \to 2\sqrt{F_\cP}$ as $n \to \infty$.
\item If $\lambda > 1/\sqrt{F_\cP}$ then $\frac{1}{\sqrt n}\lambda_{\max}(f(\hat Y)) \to \lambda F_\cP + \frac{1}{\lambda} > 2 \sqrt{F_\cP}$ as $n \to \infty$ and furthermore the top (unit-norm) eigenvector $v$ of $f(\hat Y)$ correlates with the spike:
$\langle v,x \rangle^2 \ge (\lambda - 1/\sqrt{F_\cP})^2/\lambda^2 - o(1)$ with probability $1-o(1)$.
\end{itemize}
Convergence is in probability. Here $\lambda_{\max}(\cdot)$ denotes the maximum eigenvalue.
\end{reptheorem}
\noindent Note that Lemma~\ref{lem:mom} implies that the expectation defining $F_\cP$ is finite.

\begin{proof}
First we justify a local linear approximation of $f(\hat Y_{ij})$. For $i \ne j$, define the error term $\mathcal{E}_{ij}$ by $$f(\hat Y_{ij}) = f(W_{ij}) + \lambda \sqrt{n} x_i x_j f'(W_{ij}) + \mathcal{E}_{ij}.$$
(Define $\mathcal{E}_{ii} = 0$.) We will show that the operator norm of $\mathcal{E}$ is small: $\|\mathcal{E}\| = o(\sqrt n)$ with probability $1-o(1)$.
Apply the mean-value form of the Taylor approximation remainder: $\mathcal{E}_{ij} = \frac{1}{2} f''(W_{ij} + e_{ij}) \lambda^2 n x_i^2 x_j^2$ for some $|e_{ij}| \le |\lambda \sqrt n x_i x_j|$. Bound the operator norm by the Frobenius norm:
$$\|\mathcal{E}\|^2 \le \|\mathcal{E}\|_F^2 = \frac{\lambda^4 n^2}{4} \sum_{i \ne j} x_i^4x_j^4 f''(W_{ij} + e_{ij})^2 \le \frac{\lambda^4}{4} n^{8\alpha - 2} \sum_{i \ne j} f''(W_{ij} + e_{ij})^2.$$
Using the polynomial bound on $f''$ and the fact $|a+b|^{k} \le 2^{k}(|a|^{k} + |b|^{k})$, we have
\begin{align*}
f''(W_{ij} + e_{ij})^2 &\le (C + (W_{ij} + e_{ij})^{m})^2 
\le 4C^2 + 4(W_{ij} + e_{ij})^{2m} \\
&\le 4C^2 + 4 \cdot 2^{2m} (W_{ij}^{2m} + e_{ij}^{2m}) \\
&\le 4C^2 + 2^{2m+2} (W_{ij}^{2m} + \lambda^{2m}n^{(4\alpha-1)m}) \\
&= 4C^2 + 2^{2m+2} W_{ij}^{2m} + o(1).
\end{align*}
Using finite moments of $W_{ij} \sim \cP$, it follows that
$\EE\left[\sum_{i \ne j} f''(W_{ij} + e_{ij})^2\right] = \mathcal{O}(n^2)$,
and so $\EE \|\mathcal{E}\|^2 = \mathcal{O}(n^{8\alpha}).$
Since $\alpha < 1/8$, Markov's inequality now gives the desired result: with probability $1-o(1)$, $\|\mathcal{E}\|^2 = o(n)$ and so $\|\mathcal{E}\| = o(\sqrt n)$.

Our goal will be to show that $f(\hat Y)$ is, up to small error terms, another spiked Wigner matrix. Toward this goal we define another error term: for $i \ne j$, let
$\Delta_{ij} = \lambda \sqrt{n} x_i x_j \left(f'(W_{ij}) - \mathbb{E}[f'(W_{ij})]\right)$,
so that
\begin{equation}
\label{eq:off-diag}
f(\hat Y_{ij}) = f(W_{ij}) + \lambda \sqrt{n} x_i x_j \mathbb{E}[f'(W_{ij})] + \mathcal{E}_{ij} + \Delta_{ij}.
\end{equation}
(Define $\Delta_{ii} = 0$.) We will show that the operator norm of $\Delta$ is small: $\|\Delta\| = o(\sqrt n)$ with probability $1-o(1)$. Let $A_{ij} = f'(W_{ij}) - \mathbb{E}[f'(W_{ij})]$ so that $\Delta_{ij} = \lambda \sqrt{n} x_i x_j A_{ij}$. (Define $A_{ii} = 0$.) We have $\|\Delta\| \le \lambda n^{-1/2+2\alpha} \|A\|$ because for any unit vector $y$,
\begin{align*}
y^\top \Delta y &= \sum_{i,j} \lambda \sqrt{n} x_i x_j A_{ij} y_i y_j \le \sum_{i,j} \lambda \sqrt{n} z_i A_{ij} z_j \qquad\text{where } z_i = x_i y_i \\
&\le \lambda \sqrt{n}\, \|A\| \cdot \|z\|^2 \le \lambda n^{-1/2+2\alpha} \|A\| \cdot \|y\| = \lambda n^{-1/2+2\alpha} \|A\|.
\end{align*}
Note that $A$ is a Wigner matrix (i.e.\ a symmetric matrix with off-diagonal entries i.i.d.) and so $\|A\| = \mathcal{O}(\sqrt n)$ with probability $1-o(1)$. This follows from \citet{wig-spk} Theorem~1.1, provided we can check that each entry of $A$ has finite fifth moment. But this follows from Lemma~\ref{lem:mom}:
$$\EE|A_{ij}|^5 \le 2^5 \left(\EE|f'(W_{ij})|^5 + |\EE[f'(W_{ij})]|^5\right) < \infty.$$
Now we have $\|\Delta\| = \mathcal{O}(n^{2\alpha}) = o(\sqrt n)$ with probability $1-o(1)$ as desired.

From (\ref{eq:off-diag}) we now have that, up to small error terms, $f(\hat Y)$ is another spiked Wigner matrix:
$$f(\hat Y) = f(W) + \lambda \sqrt{n}\, \EE[f'(\cP)]\, xx^\top + \mathcal{E} + \Delta - \delta$$
where (to take care of the diagonal) we define $f(W)_{ii} = 0$, $\delta_{ij} = 0$, and $\delta_{ii} = \lambda \sqrt{n}\, \EE[f'(\cP)] x_i^2$. Note that the final error term $\delta$ is also small: $\|\delta\| \le \|\delta\|_F = \mathcal{O}(n^{2\alpha}) = o(\sqrt n)$. We now have
$$\frac{1}{\sqrt n} \lambda_{\max}(f(\hat Y)) = \lambda_{\max}\left(\frac{1}{\sqrt n} f(W) + \lambda\, \mathbb{E}[f'(\cP)] \,xx^\top \right) + o(1)$$
and so the theorem follows from known results on the spectrum of spiked Wigner matrices, namely Theorem~1.1 from \citet{wig-spk}. We need to check the following details. First note that the Wigner matrix $f(W)$ has off-diagonal \iid entries that are centered:
$$\mathbb{E}[f(W_{ij})] 
= \int_{-\infty}^\infty \frac{-p'(w)}{p(w)}p(w)dw 
= p(-\infty) - p(\infty) = 0.$$
Each off-diagonal entry of $f(W)$ has variance
$\mathbb{E}[f(W_{ij})^2] = F_\cP.$
The rank-1 deformation $\lambda\, \mathbb{E}[f'(\cP)] \,xx^\top$ has top eigenvalue $\lambda\, \mathbb{E}[f'(\cP)] \cdot \|x\|^2$. Recall that $\|x\|^2 \to 1$ in probability. Also,
$$f'(w) = \frac{d}{dw} \frac{-p'(w)}{p(w)} = -\frac{p''(w)p(w) - p'(w)^2}{p(w)^2}$$
and so
$$\mathbb{E}[f'(\cP)] 
= \int_{-\infty}^\infty \left[-p''(w) + \frac{p'(w)^2}{p(w)}\right]dw = \int_{-\infty}^\infty \frac{p'(w)^2}{p(w)}dw = F_\cP.$$
Therefore the top eigenvalue of the rank-1 deformation converges in probability to $\lambda F_\cP$. By Lemma~\ref{lem:mom}, the entries of $f(W)$ have finite fifth moment.

The desired convergence of the top eigenvalue now follows. It remains to show that when $\lambda > 1/\sqrt{F_\cP}$, the top eigenvalue of $f(\hat Y)$ correlates with the planted vector $x$. Let $v$ be the top eigenvector of $f(\hat Y)$ with $\|v\| = 1$. From above we have
$$v^\top \left(\frac{1}{\sqrt n} f(\hat Y)\right)v = v^\top \left(\frac{1}{\sqrt n} f(W)\right)v + \lambda F_\cP \langle v,x \rangle^2 + o(1).$$
We know $\frac{1}{\sqrt n} f(\hat Y)$ has top eigenvalue $\lambda F_\cP + 1/\lambda + o(1)$ and $\frac{1}{\sqrt n} f(W)$ has top eigenvalue $2\sqrt{F_\cP} + o(1)$, which yields
\begin{align*}
\langle v,x \rangle^2 &\ge \frac{1}{\lambda F_\cP}(\lambda F_\cP + 1/\lambda - 2\sqrt{F_\cP})-o(1)
= \frac{(\lambda - 1/\sqrt{F_\cP})^2}{\lambda^2} - o(1).\qedhere
\end{align*}
\end{proof}


\section{Proof of Theorem~5.3: MLE for Wishart with finite prior}\label{app:wishart-mle}

Note the following well-known Chernoff bound for the $\chi^2_k$ distribution:
\begin{lemma}\label{lemma:chi2rate}
For all $0 < z < 1$,
$$ \frac1k \log \Pr\left[ \chi_k^2 < zk \right] \le \frac12(1-z+\log z). $$
Similarly, for all $z > 1$,
$$ \frac1k \log \Pr\left[ \chi_k^2 > zk \right] \le \frac12(1-z+\log z). $$

\end{lemma}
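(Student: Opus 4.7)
The plan is to use the classical Chernoff bound technique based on the moment generating function of the chi-squared distribution, which is $\EE[e^{t\chi_k^2}] = (1-2t)^{-k/2}$ for $t < 1/2$. For each tail, I will apply Markov's inequality to an appropriate exponential of $\chi_k^2$, take logs, and optimize over the Chernoff parameter.

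For the upper tail with $z > 1$: for any $t \in (0, 1/2)$, Markov gives
$$\Pr[\chi_k^2 > zk] \le e^{-tzk}\EE[e^{t\chi_k^2}] = e^{-tzk}(1-2t)^{-k/2},$$
so $\tfrac{1}{k}\log\Pr[\chi_k^2 > zk] \le -tz - \tfrac{1}{2}\log(1-2t)$. Differentiating in $t$ and setting to zero yields the optimum at $t^* = (z-1)/(2z)$, which lies in $(0,1/2)$ precisely because $z > 1$. Substituting back gives $-\tfrac{z-1}{2} + \tfrac{1}{2}\log z = \tfrac{1}{2}(1 - z + \log z)$, the claimed bound.

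For the lower tail with $0 < z < 1$: for any $t > 0$, Markov applied to $e^{-t\chi_k^2}$ gives
$$\Pr[\chi_k^2 < zk] \le e^{tzk}\EE[e^{-t\chi_k^2}] = e^{tzk}(1+2t)^{-k/2},$$
so $\tfrac{1}{k}\log\Pr[\chi_k^2 < zk] \le tz - \tfrac{1}{2}\log(1+2t)$. The optimum is at $t^* = (1-z)/(2z) > 0$, and plugging in again produces $\tfrac{1}{2}(1 - z + \log z)$.

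There is essentially no hard step here; the only mild subtlety is verifying that the optimizing $t^*$ lies in the valid range (below $1/2$ for the upper tail, above $0$ for the lower tail), which in both cases follows immediately from the sign of $z-1$. The symmetry of the two final bounds is a direct consequence of the symmetry of $\tfrac{1}{2}(1-z+\log z)$ under the Legendre-transform structure of the chi-squared rate function.
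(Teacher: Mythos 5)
Your proof is correct. The paper does not actually supply a proof of this lemma---it is stated as a ``well-known Chernoff bound'' without derivation---and your argument is precisely the standard MGF/Chernoff derivation one would expect: Markov on $e^{\pm t\chi_k^2}$, optimize the exponent, and check that the optimizer $t^* = (z-1)/(2z)$ or $(1-z)/(2z)$ lies in the admissible range. The computations are accurate, including the verification that the constraints $t^* \in (0,1/2)$ (upper tail) and $t^* > 0$ (lower tail) follow from $z>1$ and $z<1$ respectively.
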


We now prove the following theorem:
\begin{reptheorem}{thm:wishart-mle}
Let $\beta \in (-1,\infty)$. Let $\cX_n$ be a spike prior supported on at most $c^n$ points, for some fixed $c > 0$. If
$$ 2\gamma \log c < \beta - \log(1+\beta)$$
then there is a (computationally inefficient) procedure that distinguishes between the spiked Wishart model $\Wish(\gamma,\beta,\cX)$ and the unspiked model $\Wish(\gamma)$, with $o(1)$ probability of error.
\end{reptheorem}

\begin{proof}
First consider the case $\beta < 0$. Given a matrix $Y$, consider the test statistic
$$T = \min_{v \in \supp(\cX_n)} \frac{v^\top Y v}{\|v\|^2}$$
where $\supp(\cX_n)$ denotes the support of $\cX_n$. Under $Y \sim \Wish(\gamma,\beta,\cX)$ with true spike $x$, we have that $x^\top Y x/\|x\|^2 \sim \frac{1}{N}(1+\beta\|x\|^2) \chi_{N}^2$, which converges in probability to $1+\beta$ (since $\|x\| \to 1$ in probability). Hence, for any $\eps > 0$, we have that $T < 1+\beta+\eps$ with probability $1-o(1)$ under the spiked model $\Wish(\gamma,\beta,\cX)$.

Let $\hat\gamma = n/N$ so that $\hat\gamma \to \gamma$. Under the unspiked model, we have
\begin{align*}
\Pr[T \leq 1+\beta+\eps] &\leq \sum_{v \in \supp(\cX)} \Pr[v^\top Y v/\|v\|^2 \le 1+\beta+\eps] \\
&\leq c^n \Pr\left[\chi_{N}^2 \leq (1+\beta+\eps)N \right] \\
&= \exp\left[N\left( \hat\gamma \log c + \frac1N \log \Pr\left[\chi_{N}^2 \leq (1+\beta+\eps)N\right] \right)\right] \\
&\le \exp\left[N\left( \hat\gamma \log c + \frac{1}{2}(1-(1+\beta+\eps) + \log(1+\beta+\eps)) \right)\right] \\
\end{align*}
by Lemma~\ref{lemma:chi2rate}. This is $o(1)$ so long as
$$2 \gamma \log c - \beta - \eps + \log(1+\beta+\eps) < 0.$$
We can choose such $\eps > 0$ precisely under the hypothesis of this theorem.

Hence, by thresholding the statistic $T$ at $1+\beta+\eps$, we obtain a hypothesis test that distinguishes $Y \sim \Wish(\gamma,\beta,\cX)$ from $Y \sim \Wish(\gamma)$, with probability $o(1)$ of error of either type.

The proof for the case $\beta > 0$ is similar, using instead the test statistic $T = \max_{v \in \supp(\cX_n)} v^\top Y v/\|v\|^2$ along with the upper tail bound for $\chi_k^2$.
\end{proof}

\section{Basic properties of Wishart lower bound}
\label{app:prop-F}

In this section we give basic properties of the condition on $\gamma,\beta$ required by Theorem~\ref{thm:wish-nc}. Recall that this condition is $\gamma > \gamma^*$ where
\begin{equation}
\label{eq:nc-result-app}
\gamma^* f_\cX(t) \ge F(\beta,t) \quad \forall t \in (0,1)
\end{equation}
where
$$F(\beta,t) \defeq (1+\beta) \frac{t(w-t)}{1-t^2} + \frac{1}{2}\log\left(\frac{1-w^2}{1-t^2}\right)$$
and
$$w = \sqrt{A^2+1}-A \;\text{ with }\; A = \frac{1-t^2}{2t(\beta+1)}.$$

We have the following properties of $F(\beta,t)$, which can be shown using basic calculus.

\begin{itemize}

\item The $t \to 0^+$ and $t\to 1^-$ limits of $F(\beta,t)$ exist and so $F(\beta,t)$ is defined and continuous in both variables on the domain $\beta \in (-1,\infty)$, $t \in [0,1]$. The boundary values are $F(\beta,0) = 0$ and $F(\beta,1) = \frac{1}{2}(\beta - \log(1+\beta))$.

\item For any $\beta \in (-1,\infty)\setminus\{0\}$, $F(\beta,t)$ is a strictly increasing function of $t$. In particular, $F(\beta,t) \ge 0$ with equality only at $t = 0$.

\item For any $\beta \in (-1,\infty)$, $\lim_{t \to 0^+} \frac{\partial}{\partial t} F(\beta,t) = 0$ and $\lim_{t \to 0^+} \frac{\partial^2}{\partial t^2} F(\beta,t) = \beta^2$.
\

\end{itemize}

We now give some lemmas that allow for a tradeoff between certain variables while keeping (\ref{eq:nc-result-app}) true. The first allows the rate function to be weakened slightly at the expense of increasing $\gamma^*$ slightly.

\begin{lemma}
\label{lem:change-f}
Let $\gamma^* > 0$, $\beta \in (-1,\infty)\setminus\{0\}$, and $\eps > 0$. Let $f(t)$ be a function on $(0,1)$. If $\gamma^* f(t) \ge F(\beta,t)\;\forall t \in (0,1)$ then there exists $\delta > 0$ such that $(\gamma^* + \eps) f(t(1-\delta)^2) \ge F(\beta,t)\;\forall t \in (0,1)$.
\end{lemma}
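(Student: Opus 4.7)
The plan is to reduce to a purely analytic statement about the function $F(\beta,\cdot)$ and then handle two boundary regimes separately. Setting $c = (\gamma^* + \eps)/\gamma^* > 1$, I will apply the hypothesis at $s = t(1-\delta)^2 \in (0,1)$, obtaining $\gamma^* f(t(1-\delta)^2) \ge F(\beta, t(1-\delta)^2)$, and then multiply by $c$. This reduces the lemma to producing a $\delta > 0$ such that
$$c \cdot F(\beta, t(1-\delta)^2) \ge F(\beta, t) \qquad \forall t \in (0,1).$$
Equivalently, defining $R(t, \delta) := F(\beta, t(1-\delta)^2)/F(\beta, t)$ (well-defined and positive on $(0,1)$, since the listed properties guarantee $F(\beta, \cdot) > 0$ on $(0,1]$ whenever $\beta \ne 0$), it suffices to show $\inf_{t \in (0,1)} R(t, \delta) \to 1$ as $\delta \to 0^+$ and then choose $\delta$ with this infimum exceeding $1/c$.

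To verify this uniform convergence, I will split $(0,1)$ at a small threshold $\eta > 0$. On the right piece $[\eta, 1)$, the continuous extension of $F(\beta, \cdot)$ to $[0,1]$ together with its strict monotonicity gives $F(\beta, t) \ge F(\beta, \eta) > 0$, while uniform continuity of $F(\beta, \cdot)$ on the compact interval $[0,1]$ forces $\nu(\delta) := \sup_{t \in [0,1]} |F(\beta, t) - F(\beta, t(1-\delta)^2)|$ to vanish as $\delta \to 0^+$. Hence $R(t, \delta) \ge 1 - \nu(\delta)/F(\beta, \eta)$ uniformly on $[\eta, 1)$, which tends to $1$ as $\delta \to 0^+$.

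The main obstacle will be the left piece $(0, \eta]$, where both the numerator and denominator of $R(t,\delta)$ vanish as $t \to 0$, so merely appealing to continuity is insufficient. Here I will invoke the boundary facts recorded in Appendix~\ref{app:prop-F}: $F(\beta, 0) = 0$, $\lim_{t\to 0^+} F_t(\beta, t) = 0$, and $\lim_{t\to 0^+} F_{tt}(\beta, t) = \beta^2$. Two applications of L'H\^opital's rule yield $F(\beta, t)/(\tfrac{\beta^2}{2} t^2) \to 1$ as $t \to 0^+$, so I may write $F(\beta, t) = \tfrac{\beta^2}{2} t^2 (1 + \phi(t))$ with $\phi(t) \to 0$ as $t \to 0^+$. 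This representation gives
$$R(t, \delta) = (1-\delta)^4 \cdot \frac{1 + \phi(t(1-\delta)^2)}{1 + \phi(t)}.$$
Choosing $\eta$ small enough that $|\phi(s)| \le \mu$ for all $s \in (0, \eta]$ (which also controls $s = t(1-\delta)^2 \le t \le \eta$) yields $R(t, \delta) \ge (1-\delta)^4 (1-\mu)/(1+\mu)$ uniformly on $(0, \eta]$. Taking $\mu$ small enough and then $\delta$ small enough drives both pieces above $1/c$ simultaneously, which completes the argument. The only nontrivial step is extracting the quadratic asymptotic of $F$ at $0$ from the second-derivative information; everything else is routine compactness and continuity.
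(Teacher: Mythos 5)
Your proof is correct and follows essentially the same route as the paper: both arguments reduce to bounding the ratio $F(\beta,t)/F(\beta,t(1-\delta)^2)$ uniformly in $t$, dispatch the interior $t \in [\eta,1)$ by compactness and positivity of $F$, and resolve the $t \to 0^+$ boundary via the quadratic Taylor expansion $F(\beta,t) = \tfrac{\beta^2}{2}t^2(1+o(1))$ (which is exactly what the paper extracts by L'H\^opital), yielding the limit ratio $(1-\delta)^{-4}$. You simply spell out the compactness step and the Taylor remainder more explicitly than the paper's terse appeal to continuity of $\delta(t)$.
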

\begin{proof}
We have
$$(\gamma^* + \eps)f(t(1-\delta)^2)) \ge \frac{\gamma^*+\eps}{\gamma^*} F(\beta,t(1-\delta)^2)$$
so it is sufficient to show
\begin{equation}
\label{eq:F-frac}
\frac{F(\beta,t)}{F(\beta,t(1-\delta)^2)} \le \frac{\gamma^*+\eps}{\gamma^*} \quad \forall t \in (0,1].
\end{equation}
For each $t \in (0,1]$ there exists a maximal $\delta = \delta(t) > 0$ such that (\ref{eq:F-frac}) holds, and $\delta(t)$ is a continuous function of $t$. We want to show that $\delta(t)$ is bounded above 0, so we only need to check the limit $t \to 0$.

Since $\lim_{t \to 0} F(\beta,t) = \lim_{t \to 0} \frac{\partial}{\partial t} F(\beta,t) = 0$ and $\lim_{t \to 0} \frac{\partial^2}{\partial t^2}F(\beta,t) = \beta^2 > 0$ we have, using L'H\^opital's rule,
$$\lim_{t \to 0} \frac{F(\beta,t)}{F(\beta,t(1-\delta)^2)} = \frac{1}{(1-\delta)^4}$$
which can be made smaller than $(\gamma^*+\eps)/\gamma^*$ by taking $\delta > 0$ small enough.
\end{proof}

The next lemma allows $\beta$ to be increased slightly at the expense of increasing $\gamma^*$ slightly.


\begin{lemma}
\label{lem:change-beta}
Let $\gamma^* > 0$, $\beta \in (-1,\infty)\setminus\{0\}$, and $\eps > 0$. Let $f(t)$ be a function on $(0,1)$. If $\gamma^* f(t) \ge F(\beta,t)\;\forall t \in (0,1)$ then there exists $\delta > 0$ such that $(\gamma^* + \eps) f(t) \ge F(\beta(1+\delta)^2,t)\;\forall t \in (0,1)$.
\end{lemma}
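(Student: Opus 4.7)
The plan is to mirror the structure of the proof of Lemma~\ref{lem:change-f}: reduce to showing that the ratio $F(\beta(1+\delta)^2,t)/F(\beta,t)$ can be made uniformly close to $1$ by choosing $\delta > 0$ small. Concretely, since $\gamma^* f(t) \ge F(\beta,t)$, it suffices to exhibit $\delta > 0$ with
\begin{equation*}
\frac{F(\beta(1+\delta)^2,t)}{F(\beta,t)} \le \frac{\gamma^*+\eps}{\gamma^*} \qquad \forall t \in (0,1),
\end{equation*}
because then $F(\beta(1+\delta)^2,t) \le \frac{\gamma^*+\eps}{\gamma^*}\gamma^* f(t) = (\gamma^*+\eps) f(t)$ as required. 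Note first that for $\beta \in (-1,\infty)\setminus\{0\}$ we can choose $\delta$ small enough that $\beta(1+\delta)^2 \in (-1,\infty)\setminus\{0\}$ (the only issue is when $\beta \in (-1,0)$, in which case we need $(1+\delta)^2 < 1/|\beta|$), so $F(\beta(1+\delta)^2,t)$ is defined and positive for $t \in (0,1]$.

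First I would set up the ratio $g(t,\delta) \defeq F(\beta(1+\delta)^2,t)/F(\beta,t)$ as a continuous function on $(0,1) \times [0,\delta_0]$ for $\delta_0$ chosen as above, and then extend it continuously to $[0,1] \times [0,\delta_0]$. The extension at $t=1$ is immediate from $F(\beta,1) = \tfrac12(\beta - \log(1+\beta)) > 0$, giving
\begin{equation*}
g(1,\delta) = \frac{\beta(1+\delta)^2 - \log(1+\beta(1+\delta)^2)}{\beta - \log(1+\beta)},
\end{equation*}
which clearly tends to $1$ as $\delta \to 0$. The extension at $t=0$ requires the Taylor data from the bullets before Lemma~\ref{lem:change-f}: since $F(\beta',0) = 0$, $\partial_t F(\beta',0) = 0$, and $\partial_t^2 F(\beta',0) = (\beta')^2$, L'H\^opital's rule (applied twice) gives
\begin{equation*}
\lim_{t \to 0^+} g(t,\delta) = \frac{(\beta(1+\delta)^2)^2}{\beta^2} = (1+\delta)^4.
\end{equation*}
In particular $g(0,0) = 1$ and $g(t,0) \equiv 1$ on the whole interval.

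Now the conclusion follows by a standard compactness argument: $g$ is continuous on the compact set $[0,1] \times [0,\delta_0]$ and equals $1$ identically on the slice $\delta = 0$, so $\sup_{t \in [0,1]} |g(t,\delta) - 1| \to 0$ as $\delta \to 0$. Pick $\delta \in (0,\delta_0]$ small enough that this supremum is at most $\eps/\gamma^*$; this yields the desired uniform bound on the ratio and completes the proof. The only mildly delicate step is the boundary analysis at $t=0$, where both numerator and denominator vanish to second order — but the Taylor data listed in Appendix~\ref{app:prop-F} makes the limit a clean $(1+\delta)^4$ (which $\to 1$), so there is no real obstacle; everything else is continuity and compactness.
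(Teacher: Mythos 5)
Your proof is correct and essentially mirrors the paper's, differing only in presentation. The paper reduces to the same sufficient condition $F(\beta(1+\delta)^2,t)/F(\beta,t) \le (\gamma^*+\eps)/\gamma^*$ but then works with $\partial_\beta \log F(\beta,t)$, showing that it is bounded uniformly over $t \in (0,1]$ and $\beta$ in a compact interval avoiding $0$ (by continuity plus the finite limit $2/\beta$ as $t\to 0^+$), which yields a Lipschitz estimate in $\beta$ and hence the bound for small $\delta$. You instead work with the ratio $g(t,\delta)$ directly, extend it continuously to the compact square $[0,1]\times[0,\delta_0]$, note $g(\cdot,0)\equiv 1$, and invoke uniform continuity; the only nontrivial boundary analysis is again the $t\to 0^+$ limit, where the Taylor data gives $(1+\delta)^4$. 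Both arguments live or die on the same ingredient (the behavior of $F$ near $t=0$, where numerator and denominator vanish to second order), and both leave the joint continuity of the relevant quantity implicit—acceptable here since $F$ is given by explicit smooth formulas and $F(\beta',t)/t^2$ extends continuously with positive limit $\beta'^2/2$ uniformly in $\beta'$ on compacts. Your compactness framing is arguably a touch more elementary (no $\beta$-derivative of $\log F$ needed), but the idea is the same.
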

\begin{proof}
We have
$$(\gamma^* + \eps)f(t) \ge \frac{\gamma^*+\eps}{\gamma^*} F(\beta,t)$$
so it is sufficient to show
$$\frac{F(\beta(1+\delta)^2,t)}{F(\beta,t)} \le \frac{\gamma^*+\eps}{\gamma^*} \quad \forall t \in (0,1],$$
or equivalently,
$$\log F(\beta(1+\delta)^2,t) - \log F(\beta,t) \le \log\left(\frac{\gamma^*+\eps}{\gamma^*}\right).$$
It is sufficient to have, for any fixed compact interval $\mathcal{I} \subseteq (-1,\infty)$ not containing zero, that $|\frac{\partial}{\partial\beta} \log F(\beta,t)|$ is bounded by a constant, uniformly over all $t \in (0,1]$ and $\beta \in \mathcal{I}$. Since $\frac{\partial}{\partial \beta} \log F(\beta,t)$ is defined and continuous in both variables (on the domain $t \in (0,1]$ and $\beta > -1$), we only need to check the limit $t \to 0$. We have $\lim_{t \to 0} \frac{\partial}{\partial \beta} \log F(\beta,t) = 2/\beta$.

\end{proof}

\section{Proof of Lemma~5.13}
\label{app:wish-small-dev-chernoff}

Here we show how to use the local Chernoff bound to bound the small deviations of the Wishart second moment. Letting $\hat\gamma = n/N$ so that $\hat\gamma \to \gamma$ we have
\begin{align*}
S(\eps) &= \Ex_{x,x' \sim \cX} \exp\left(\frac{-n}{2\hat\gamma} \log(1-\beta^2\langle x,x' \rangle^2) \right) \;\one_{\langle x,x' \rangle^2 \leq \eps} \\
&\leq \Ex_{x,x' \sim \cX} \exp\left( \frac{-n}{2\hat\gamma \eps^2} \log(1-\eps^2\beta^2) \langle x,x' \rangle^2 \right) \;\one_{\langle x,x' \rangle^2 \leq \eps} \\
\intertext{where we used the convexity of $t \mapsto -\log(1-\beta^2 t)$}
&= \int_0^\infty \problr{\exp\left( \frac{-n}{2\hat\gamma \eps^2} \log(1-\eps^2\beta^2) \langle x,x' \rangle^2 \right) \one_{\langle x,x' \rangle^2 \leq \eps} \ge u} \, \dee u \\
&= \int_0^\infty \problr{\langle x,x' \rangle^2 \le \eps \quad\text{and}\quad \exp\left( \frac{-n}{2\hat\gamma \eps^2} \log(1-\eps^2\beta^2) \langle x,x' \rangle^2 \right) \ge u} \, \dee u \\
&= \int_0^\infty \problr{\langle x,x' \rangle^2 \le \eps \text{ and } \langle x,x' \rangle^2 \ge t} \frac{-n}{2\hat\gamma\eps^2} \log(1-\eps^2\beta^2) \exp\left(-\frac{n}{2\hat\gamma\eps^2} \log(1-\eps^2\beta^2) t\right)\,\dee t \\
&= \int_0^\eps \problr{\langle x,x' \rangle^2 \ge t} \frac{-n}{2\hat\gamma\eps^2} \log(1-\eps^2\beta^2) \exp\left(-\frac{n}{2\hat\gamma\eps^2} \log(1-\eps^2\beta^2) t\right)\,\dee t \\
\intertext{where $t$ is defined by $\exp(-\frac{n}{2\hat\gamma\eps^2}\log(1-\eps^2\beta^2)t) = u$. If $\eps$ is sufficiently small we can apply the local Chernoff bound:}
&\le \int_0^\eps \frac{-Cn}{2\hat\gamma\eps^2} \log(1-\eps^2\beta^2) \exp\left(-n f_\cX(\sqrt{t}) -\frac{n}{2\hat\gamma\eps^2} \log(1-\eps^2\beta^2) t\right)\,\dee t.
\end{align*}
Using the identity $\int_0^\infty n \exp(-n \alpha t) \,\dee t = 1/\alpha$ (for $\alpha > 0$), the above is bounded provided we have $\eps > 0$ and $\alpha > 0$ such that
$$f_\cX(\sqrt{t}) \ge -\frac{1}{2\gamma\eps^2}\log(1-\eps^2\beta^2)t + \alpha t \quad\forall t \in [0,\eps).$$
Using the bound $\log t \ge 1 - 1/t$ we have $-\frac{1}{\eps^2} \log(1-\eps^2\beta^2) \le \frac{\beta^2}{1-\eps^2\beta^2}$ and so it is sufficient to show $f_\cX(\sqrt{t}) \ge \left(\frac{\beta^2}{2\gamma} + \eta \right)t$ for all $t \le \eps$, for some $\eta > 0$. But this can be derived from (\ref{eq:nc-result-app}) as follows. With $\gamma > \gamma^*$ we have $\gamma^* f_\cX(t) \ge F(\beta,t)$ for all $t \in (1,0)$. Rewrite this as $f_\cX(\sqrt{t}) \ge F(\beta,\sqrt{t})/\gamma^*$ and compute $\lim_{t \to 0} \frac{\partial}{\partial t} F(\beta,\sqrt{t})/\gamma^* = \beta^2/2\gamma^* > \beta^2/2\gamma$.


\section{Comparison of priors and general case of Wishart lower bound}
\label{app:unit-wlog}

In the main text we have proven Theorem~\ref{thm:wish-nc} in the special case that $\cX$ is supported on unit vectors. Here we extend the proof to the general case where $\|x\| \to 1$ in probability. The same argument also yields a result for comparison of priors (Proposition~\ref{prop:wish-compare}), similar to Proposition~\ref{prop:compare} for the Gaussian Wigner model.

Suppose that $\cX, \beta, \gamma^*$ satisfy the assumptions of Theorem~\ref{thm:wish-nc} and let $\gamma > \gamma^*$. Our goal is to show $\Wish(\gamma,\beta,\cX) \contig \Wish(\gamma)$. Let $\delta > 0$ and let $\tilde \cX$ be the conditional distribution of $x \sim \cX$ given $1-\delta \le \|x\| \le 1+\delta$. Let $M(\gamma,\beta,\tilde\cX)$ denote the conditional Wishart second moment defined in Section~\ref{sec:wish-nc}. We will show that for $\delta$ small enough, $M(\gamma,\beta,\tilde\cX)$ is bounded, implying the desired result (via Lemma~\ref{lem:cond}).

Let $\bar{\cX}$ be the distribution of $\bar x \defeq \tilde x/\|\tilde x\|$ with $\tilde x \sim \tilde\cX$. The idea of the proof is to show that the assumptions of Theorem~\ref{thm:wish-nc} are satisfied for $\bar{\cX}$ so that we can apply the basic ($\|x\|=1$) version of the theorem (which we have already proven). Note that $f_{\bar\cX}(t) \defeq f_\cX(t(1-\delta)^2)$ is a valid rate function for $\bar\cX$. This follows from
$$\prob{|\langle \bar x,\bar x' \rangle| \ge t} \le \prob{|\langle \tilde x,\tilde x' \rangle| \ge t(1-\delta)^2} \le c \cdot \prob{|\langle x,x' \rangle| \ge t(1-\delta)^2}$$
where $c = 1+o(1)$. We can take the lower bound (in Definition~\ref{def:rate-function}) to be $b_{n,\bar\cX} = -\frac{1}{n} \log c + b_{n,\cX}(t(1-\delta)^2)$. If $f_\cX$ admits a local Chernoff bound (condition (ii) of Theorem~\ref{thm:wish-nc}) then so does $f_{\bar\cX}$.

As in the proof for the $\|x\|=1$ case, we treat the small and large deviations separately. The parameter $\alpha$ that separates the small ($|\alpha| \in [0,\eps]$) and large ($|\alpha| \in (\eps,1]$) deviations is now defined with normalization: $\alpha \defeq \langle x,x' \rangle/(\|x\|\cdot\|x'\|)$.

\subsection{Small deviations}

We have
$$\Ex_{\tilde x,\tilde x' \sim \tilde\cX}(1-\beta^2 \langle \tilde x,\tilde x' \rangle^2)^{-N/2} \one_{\langle \tilde x,\tilde x' \rangle^2/(\|\tilde x\| \cdot \|\tilde x'\|)^2 \le \eps}
\le \Ex_{\bar x,\bar x' \sim \bar\cX}(1-\beta^2 (1+\delta)^4 \langle \bar x,\bar x' \rangle^2)^{-N/2} \one_{\langle \bar x,\bar x' \rangle^2 \le \eps},$$
i.e.\ the small deviations of $M(\gamma,\beta,\tilde\cX)$ are bounded by the small deviations of $M(\gamma,\beta(1+\delta)^2,\bar\cX)$. Therefore it is sufficient to verify the conditions of Theorem~\ref{thm:wish-nc} for $\gamma,\beta(1+\delta)^2,\bar\cX$.

First we show that if condition (i) ($\beta^2/\gamma^* \le (\lambda^*_\cX)^2$) in Theorem~\ref{thm:wish-nc} was satisfied for $\gamma,\beta,\cX$ then it is still satisfied for $\gamma,\beta(1+\delta)^2,\bar\cX$ (provided we allow an arbitrarily-small increase in $\gamma^*$). Since conditioning on a $(1-o(1))$-probability event can only increase the Wigner second moment by a $(1+o(1))$ factor, we have $\lambda^*_{\tilde \cX} \ge \lambda^*_{\cX}$. We also have
$$\Ex_{\bar x,\bar x' \sim \bar\cX} \exp\left(\frac{n\lambda^2}{2}\langle \bar x,\bar x' \rangle^2\right) \le \Ex_{\tilde x,\tilde x' \sim \tilde\cX} \exp\left(\frac{n\lambda^2}{2(1-\delta)^2} \langle \tilde x,\tilde x' \rangle^2\right)$$
and so $\lambda^*_{\bar\cX} \ge (1-\delta)\lambda^*_{\tilde\cX} \ge (1-\delta) \lambda^*_\cX$. Therefore by choosing $\delta$ small enough we can find $\bar\gamma^*$ with $\gamma^* < \bar\gamma^* < \gamma$ such that $\beta^2(1+\delta)^4/\bar\gamma^* \le (\lambda^*_{\bar\cX})^2$ as desired.

Now we check that (\ref{eq:nc-result-app}) is satisfied for $\gamma,\beta(1+\delta)^2,\bar\cX$. We are guaranteed $\gamma > \gamma^*$ with
\begin{equation}
\label{eq:F-1}
\gamma^* f_\cX(t) \ge F(\beta,t) \quad\forall t \in (0,1).
\end{equation}
Our goal is to show (for sufficiently small $\delta$) $\gamma > \bar\gamma^*$ with
\begin{equation}
\label{eq:F-2}
\bar\gamma^* f_{\bar\cX}(t) \ge F(\beta(1+\delta)^2,t) \quad\forall t \in (0,1).
\end{equation}
The proof of (\ref{eq:F-2}) follows from (\ref{eq:F-1}) by Lemmas~\ref{lem:change-f} and~\ref{lem:change-beta}. The first allows us to replace $f_\cX$ by $f_{\bar\cX}$ and the second allows us to increase $\beta$ to $\beta(1+\delta)^2$. Each of these changes comes at the expensive of increasing $\gamma^*$ (to $\bar\gamma^*$) by an arbitrarily-small amount (which can be done such that $\gamma > \bar\gamma^*$).

\subsection{Large deviations}

We now consider the contribution to  $M(\gamma,\beta,\tilde\cX)$ from $|\alpha| \in [\eps,1-\eps]$. The contribution from $|\alpha| \in (1-\eps,1]$ can be handled similarly. We have
$$\Ex_{\tilde x,\tilde x' \sim \tilde \cX}[ \one_{|\alpha| \in [\eps,1-\eps]}\, \tilde m(\tilde x,\tilde x')]$$
where
\begin{align*}
\tilde m(\tilde x,\tilde x')
\defeq& \Ex_{Y \sim P_n} (1+\beta \|\tilde x\|^2)^{-N/2} (1+\beta \|\tilde x'\|^2)^{-N/2} \\
& \exp\left(\frac{N}{2}\left(\frac{\beta}{1+\beta\|\tilde x\|^2} \tilde x^\top Y \tilde x + \frac{\beta}{1+\beta\|\tilde x'\|^2} \tilde x'^\top Y \tilde x'\right)\right) \one_{\Omega(\tilde x,Y)} \one_{\Omega(\tilde x',Y)} \\
\le& \Ex_{Y \sim P_n} (1+\beta(1-\delta)^2)^{-N} \\
& \exp\left(\frac{N}{2}\left(\frac{\beta \|\tilde x\|^2}{1+\beta\|\tilde x\|^2} \bar x^\top Y \bar x + \frac{\beta\|\tilde x'\|^2}{1+\beta\|\tilde x'\|^2} \bar x'^\top Y \bar x'\right)\right) \one_{\Omega(\tilde x,Y)} \one_{\Omega(\tilde x',Y)}
\end{align*}
where $\bar x = \tilde x/\|\tilde x\|$ and $\bar x' = \tilde x'/\|\tilde x'\|$. Note that $\Omega(\tilde x,Y)$ can be written as $\bar x^\top Y \bar x \in [(1+\beta\|\tilde x\|^2)(1-\delta),(1+\beta\|\tilde x\|^2)(1+\delta)]$. We can upper bound the resulting expression by replacing each instance of $\|\tilde x\|^2$ by either $1+\delta$ or $1-\delta$. Since only $\bar x,\bar x'$ (and not $\tilde x,\tilde x'$) now appear, we have reduced to the original case of the proof (since $\|\bar x\|=\|\bar x'\|=1$) but with the $\beta$'s replaced by slightly different constants; carrying through the proof as before yields the sufficient condition $\gamma > \bar\gamma^*$ with $\bar\gamma^* f_{\bar \cX}(t) \ge F_\delta(\beta,t) \; \forall t \in [\eps,1-\eps]$ where for each $t$, $F_\delta(\beta,t) \to F(\beta,t)$ as $\delta \to 0^+$. Since $F,F_\delta$ are continuous and $[\eps,1-\eps]$ is compact, the convergence $F_\delta(\beta,t) \to F(\beta,t)$ is uniform over $t \in [\eps,1-\eps]$. Let $\gamma^* < \hat\gamma^* < \bar \gamma^* < \gamma$. $F(\beta,t)$ is positive and increasing in $t$ for $t \in [\eps,1-\eps]$ (see Appendix~\ref{app:prop-F}), so provided $\delta$ is small enough, it is sufficient to show $\hat\gamma^* f_{\bar\cX}(t) \ge F(\beta,t) \;\forall t \in [\eps,1-\eps]$. This follows from the assumption $\gamma^* f_{\cX}(t) \ge F(\beta,t)$ along with Lemma~\ref{lem:change-f}. The proof of Theorem~\ref{thm:wish-nc} in full generality is now complete.

\subsection{Comparison of similar priors}

The same argument used above implies the following which may be of independent interest.

\begin{proposition}
\label{prop:wish-compare}
Let $\cX$ and $\mathcal{Y}$ be spike priors. Suppose that $x \sim \cX_n$ and $y \sim \mathcal{Y}_n$ can be coupled such that $y = \alpha x$ where $\alpha = \alpha_n$ is a random variable with $\alpha_n \to 1$ in probability as $n\to\infty$. Suppose that the conditions of Theorem~\ref{thm:wish-nc} are satisfied for $\cX,\beta,\gamma^*$. Then for any $\gamma > \gamma^*$, $\Wish(\beta,\gamma,\mathcal{Y}) \contig \Wish(0)$.
\end{proposition}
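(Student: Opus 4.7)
The plan is to mimic the argument of Appendix~\ref{app:unit-wlog}, observing that the random scaling $\alpha$ in the coupling $y = \alpha x$ plays exactly the role that the random norm $\|x\|$ did there. First, I would condition $\mathcal{Y}$ on the $(1-o(1))$-probability event $|\alpha - 1| \le \delta$ for an auxiliary small $\delta > 0$, obtaining a conditional prior $\tilde{\mathcal{Y}}$. By Lemma~\ref{lem:cond} it then suffices to bound the Wishart conditional second moment $M(\gamma,\beta,\tilde{\mathcal{Y}})$ of Section~\ref{sec:wish-nc} (with the additional conditioning on the noise events $\Omega(y,Y)$) as $n\to\infty$.

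Next, I would verify that $\tilde{\mathcal{Y}}$ inherits suitable versions of the hypotheses of Theorem~\ref{thm:wish-nc} from $\cX$. Since the coupling gives $\langle y,y'\rangle = \alpha\alpha'\langle x,x'\rangle$, the function $f_{\tilde{\mathcal{Y}}}(t) \defeq f_\cX(t/(1+\delta)^2)$ is a valid rate function for $\tilde{\mathcal{Y}}$ and it admits a local Chernoff bound whenever $f_\cX$ does; similarly $\lambda^*_{\tilde{\mathcal{Y}}} \ge \lambda^*_\cX/(1+\delta)^2$. Then I would split $M(\gamma,\beta,\tilde{\mathcal{Y}})$ into small-deviation and large-deviation pieces using the normalized correlation $\langle y,y'\rangle/(\|y\|\|y'\|)$, which by the coupling equals the normalized correlation of $x,x'$. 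For the small deviations, Lemmas~\ref{lem:wish-small-dev-1} and~\ref{lem:wish-small-dev-2} apply directly using the inherited bounds. For the large deviations, I would run the saddle-point computation of Section~\ref{sec:wish-main-interval} with $\|y\|^2,\|y'\|^2$ in place of $1$; since these lie within $1 \pm O(\delta)$ of $\|x\|^2$ with probability $1-o(1)$ and $\|x\|\to 1$ in probability, the bound reduces to the unit-norm case at a slightly perturbed value of $\beta$ and with rate function $f_{\tilde{\mathcal{Y}}}$.

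The final step is to invoke Lemmas~\ref{lem:change-f} and~\ref{lem:change-beta} to absorb all the $O(\delta)$ perturbations (in $\beta$, in the rate function, and in $\lambda^*$) into an arbitrarily-small increase $\gamma^* < \bar\gamma^* < \gamma$ for which condition~(\ref{eq:nc-result-app}) still holds, and then apply the unit-norm case of Theorem~\ref{thm:wish-nc} (already proved in Section~\ref{sec:wish-nc}) to conclude boundedness. The main obstacle is purely bookkeeping: one must check that each $O(\delta)$ correction pushes the bound in the harder direction (larger $|\beta|$, smaller rate function, smaller $\lambda^*$) so that the auxiliary lemmas of Appendix~\ref{app:prop-F} have the correct sign to absorb the slack. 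This is exactly the bookkeeping carried out in Appendix~\ref{app:unit-wlog}, because the coupling $y = \alpha x$ produces correction factors of the same form $(1\pm\delta)^{O(1)}$ that the random norm $\|x\|$ produced there; no new analytic input is required.
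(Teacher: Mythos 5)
Your proposal is correct and takes essentially the same approach as the paper: the paper's (one-paragraph) proof also reduces to Appendix~\ref{app:unit-wlog} by observing that, after conditioning and normalizing, the unit-norm priors $\bar\cX$ and $\bar{\mathcal{Y}}$ coincide under the coupling $y = \alpha x$, which is the same fact as your observation that the normalized correlations of $(y,y')$ and $(x,x')$ agree. Your handling of the $(1\pm\delta)$ factors via Lemmas~\ref{lem:change-f} and~\ref{lem:change-beta} matches the paper's sketch; the only cosmetic point is that, as in Appendix~\ref{app:unit-wlog}, the conditioning event should bound $\|y\|$ (or equivalently both $\alpha$ and $\|x\|$) near $1$, not just $\alpha$, so that the saddle-point computation has bounded norm factors.
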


\begin{proof}
The proof is similar to the arguments above so we only give a sketch. We define modified priors $\tilde\cX, \tilde{\mathcal{Y}}, \bar\cX, \bar{\mathcal{Y}}$ as above and note that $\bar\cX$ and $\bar{\mathcal{Y}}$ are the same. Since the conditions of Theorem~\ref{thm:wish-nc} are satisfied for $\cX$, they are also satisfied for $\bar\cX$ at the expense of an arbitrarily-small increase in $\gamma^*$. We can then control the Wishart conditional second moment $M(\gamma,\beta,\tilde{\mathcal{Y}})$ by comparison to $\bar\cX$ (i.e.\ $\bar{\mathcal{Y}}$).
\end{proof}

\section{Monotonicity of Wishart lower bound}
\label{app:monotonicity}

In this section we prove various properties of the condition (\ref{eq:nc-result-app}), implying certain monotonicity properties of the Wishart lower bound (Theorem~\ref{thm:wish-nc}). Informally speaking, we will show the following.
\begin{itemize}
\item If the PCA threshold is optimal for some $\bar\beta \in (-1,\infty)\setminus \{0\}$, it is also optimal for all $\beta > \bar\beta$ (Proposition~\ref{prop:eq-monotone}).
\item If the PCA threshold is optimal for the Wigner model, it is also optimal for the positively-spiked ($\beta > 0$) Wishart model (Corollary~\ref{cor:wig-pos-wish}). Conversely, if PCA is optimal for Wishart for all $\beta > 0$ then it is optimal for Wigner (Proposition~\ref{prop:pos-wish-wig}).
\item For any reasonable \iid prior, if $\beta$ is sufficiently large then the PCA threshold is optimal (Proposition~\ref{prop:wish-large-beta}).
\end{itemize}
\noindent The statements above are informal; the true results we prove are of the form e.g.\ ``if \emph{our methods} show a Wigner lower bound then they also show a Wishart lower bound.''

\begin{repproposition}{prop:eq-monotone}
Let $\cX$ be a spike prior. Fix $\lambda > 0$ and $\bar\beta \in (-1,\infty)\setminus\{0\}$. If (\ref{eq:nc-result-app}) holds for $\bar\beta$ and $\gamma^* = \bar\beta^2/\lambda^2$ then it also holds for any $\beta > \bar\beta$ and $\gamma^* = \beta^2/\lambda^2$.
\end{repproposition}

\begin{proof}
The condition (\ref{eq:nc-result-app}) takes the form $\gamma^* f_\cX(t) \ge F(\beta,t)\;\; \forall t \in (0,1)$. With $\lambda$ fixed and $\gamma^* = \beta^2/\lambda^2$, this is equivalent to $f_\cX(t) \ge \lambda^2 F(\beta,t)/\beta^2\;\; \forall t \in (0,1)$. It is therefore sufficient to show the following lemma.
\phantom\qedhere 
\end{proof}

\begin{lemma}
\label{lem:Fb2-dec}
For any fixed $t \in (0,1)$, $F(\beta,t)/\beta^2$ is a decreasing function of $\beta$ on the domain $\beta \in (-1,\infty)$. (When $\beta = 0$ we define $F(\beta,t)/\beta^2$ by its limit value.)
\end{lemma}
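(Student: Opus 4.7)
\emph{Proof plan.} I would show the equivalent statement that $\psi(\beta,t) := \beta F_\beta - 2F$ has the opposite sign from $\beta$ on $(-1,\infty)\setminus\{0\}$, so that $\frac{d}{d\beta}(F/\beta^2) = \psi/\beta^3 \le 0$. The main technical leverage is that the variable $w$ defined in the statement is a critical point of $F$ regarded as a function of $w$ at fixed $(\beta,t)$: indeed, direct computation gives
$$\partial_w F = (1+\beta)\frac{t}{1-t^2} - \frac{w}{1-w^2},$$
and this vanishes exactly when $(1+\beta) = w(1-t^2)/(t(1-w^2))$, which is equivalent (via squaring $w+A = \sqrt{A^2+1}$) to the defining relation $A = (1-w^2)/(2w)$. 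Moreover $\partial_w^2 F = -(1+w^2)/(1-w^2)^2 < 0$, so $w$ is in fact the unique maximizer. By the envelope theorem,
$$F_\beta = \frac{t(w-t)}{1-t^2}, \qquad F = (1+\beta) F_\beta + \tfrac{1}{2}\log\frac{1-w^2}{1-t^2}.$$

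Substituting these, together with the identities $(1-t^2)w + t(1-w^2) = (t+w)(1-tw)$ and $2+\beta = (t+w)(1-tw)/(t(1-w^2))$, the combination $\psi$ simplifies to
$$\psi = -\frac{(w^2-t^2)(1-tw)}{(1-w^2)(1-t^2)} - \log\frac{1-w^2}{1-t^2}.$$
Since the map $w \in (0,1) \leftrightarrow \beta \in (-1,\infty)$ is a strictly increasing bijection with $w = t \leftrightarrow \beta = 0$, we have $\mathrm{sign}(\beta) = \mathrm{sign}(w-t)$, and since $\psi(t) = 0$, the lemma reduces to $\psi$ being a strictly decreasing function of $w$ on $(0,1)$.

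The monotonicity follows from direct differentiation and factoring. Placing the three terms of $\psi'(w)$ over the common denominator $(1-w^2)^2(1-t^2)$ produces a quartic polynomial in $w$ which must vanish to order two at $w=t$ to match the order-three vanishing of $\psi$ itself. The key calculation is the clean factorization
$$\psi'(w) = -\frac{(w-t)^2\,(tw^2 + 2w + t)}{(1-w^2)^2(1-t^2)},$$
where the quadratic factor $tw^2 + 2w + t > 0$ on $(0,1)^2$, yielding $\psi'(w) \le 0$ with strict inequality away from $w = t$. The main obstacle is purely computational: confirming this factorization requires either careful expansion of the quartic followed by synthetic division by $(w-t)^2$, or verification that $\psi'(t) = 0 = \psi''(t)$ together with a degree count to pin down the remaining quadratic. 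The removable singularity at $\beta = 0$ of $\psi/\beta^3$ is then handled by the resulting triple zero of $\psi$ at $w = t$, which matches the cubic vanishing of $\beta = B(w)$ there; the limiting value $H(t) = t^2/(2(1+t^2))$ is consistent with $\lim_{t \to 0^+} \partial_t^2 F = \beta^2$ recorded in the excerpt.
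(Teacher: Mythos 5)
Your proposal is correct, and it takes a genuinely different route from the paper's. The paper works in the variable $\beta$ directly: after observing that $\lim_{t\to 0}\frac{\partial}{\partial\beta}\frac{F}{\beta^2}=0$, it differentiates once more in $t$ and shows $\frac{\partial^2}{\partial\beta\,\partial t}\frac{F}{\beta^2}$ has sign $-\operatorname{sign}(\beta)$; the key computation there is that the numerator is $U_1-U_2$ with $U_1^2-U_2^2=-4\beta^3(2+\beta)t^4(1-t^2)^2$, so one only needs the sign of a difference of two nonnegative quantities. Your proof instead changes variables from $\beta$ to $w$. This is the right move conceptually: $w$ is the argmax in the envelope formula for $F$, and once one notices $\partial_w F=0$ at $w$, the envelope theorem gives $F_\beta=\frac{t(w-t)}{1-t^2}$ without any implicit differentiation of $w(\beta,t)$, while the bijection $w\mapsto\beta=\frac{w(1-t^2)}{t(1-w^2)}-1$ is strictly increasing with $w=t\leftrightarrow\beta=0$. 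The quantity $\psi=\beta F_\beta-2F$ then simplifies, via $2+\beta=\frac{(t+w)(1-tw)}{t(1-w^2)}$, to the rational-plus-log expression you wrote, and the claimed factorization $\psi'(w)=-\frac{(w-t)^2(tw^2+2w+t)}{(1-w^2)^2(1-t^2)}$ does check out (I verified the expansion): the quartic numerator of $\psi'$ is exactly $-(w-t)^2(tw^2+2w+t)$. Since $t,w\in(0,1)$ makes $tw^2+2w+t>0$, this gives $\psi'\le 0$ with strict inequality off $w=t$, hence $\psi$ has sign opposite to $w-t$, i.e.\ opposite to $\beta$, and $\frac{d}{d\beta}(F/\beta^2)=\psi/\beta^3<0$. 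What your approach buys is that the envelope structure makes the derivative $F_\beta$ trivial and the sign question one-dimensional in $w$; what it costs is the change-of-variables bookkeeping and the factorization of a quartic, which is comparable in effort to the paper's $U_1^2-U_2^2$ identity. Your remark about the triple zero of $\psi$ at $w=t$ matching the triple zero of $\beta^3$ is a correct sanity check on the removable singularity at $\beta=0$.
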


\begin{proof}
We wish to show that $\dd{\beta} \frac{F(\beta,t)}{\beta^2} < 0$. One computes that $\lim_{t \to 0} \dd{\beta} \frac{F(\beta,t)}{\beta^2} = 0$, so it suffices to show that $\frac{\partial^2}{\partial \beta\, \partial t} \frac{F(\beta,t)}{\beta^2} \leq 0$ for $0 < t < 1$. We compute:
$$ \frac{\partial^2}{\partial \beta\, \partial t} \frac{F(\beta,t)}{\beta^2} = \frac{U_1 - U_2}{\beta^3\, S\, t (1-t^2)^2},  \quad \text{where}$$
$$ S = \sqrt{1 + t^2 (2 + 8\beta + 4\beta^2) + t^4}, \quad U_1 = S (1 + 2 (1+\beta) t^2 + t^4) \geq 0,$$
$$U_2 = (1+t^2) ( 1 + t^2 (2 + 6\beta + 2\beta^2) + t^4) \geq 0. $$
The denominator evidently has sign matching $\beta$, so it suffices to see that the numerator has sign matching $-\beta$. As $U_1 \geq 0$ and $U_2 \geq 0$, the sign of $U_1-U_2$ will match that of $U_1^2-U_2^2$, and we compute:
$$ U_1^2 - U_2^2 = -4 \beta^3 (2+\beta) t^4 (1-t^2)^2 $$
which has sign matching that of $-\beta$, as desired.
\end{proof}


\begin{repcorollary}{cor:wig-pos-wish}
Suppose $\langle x,x' \rangle$ is $(\sigma^2/n)$-subgaussian, where $x$ and $x'$ are drawn independently from $\cX_n$. Then for any $\beta > 0$ and any $\gamma > \beta^2 \sigma^2$ we have $\Wish(\gamma,\beta,\cX) \contig \Wish(\gamma)$.
\end{repcorollary}

\noindent This connects the Wigner model to the Wishart model because the subgaussian condition above implies a \emph{Wigner} lower bound for all $\lambda < 1/\sigma$ (Proposition~\ref{prop:subg}).

\begin{proof}

The subgaussian tail bound $\prob{|\langle x,x' \rangle| \ge t} \le 2 \exp(-nt^2/2\sigma^2)$ implies that we have the rate function $f_\cX(t) = t^2/2\sigma^2$.

Let $F(\beta,t)$ be defined as in (\ref{eq:nc-result-app}). For any fixed $t \in (0,1)$, we have
\begin{equation}
\label{eq:limit-wig-nc}
\lim_{\beta \to 0} \frac{F(\beta,t)}{\beta^2} = \frac{t^2}{2(1+t^2)}.
\end{equation}
This can be shown by computing the Taylor series of $F(\beta,t)$ at $\beta = 0$. From Lemma~\ref{lem:Fb2-dec} above, we know that $F(\beta,t)/\beta^2$ is a decreasing function of $\beta$. Therefore, for any $t \in (0,1)$ and any $\beta > 0$ we have
$$\frac{F(\beta,t)}{\beta^2} \le \frac{t^2}{2(1+t^2)} \le \frac{t^2}{2}.$$

By combining the above results it follows that (\ref{eq:nc-result-app}) holds with $\gamma^* = \beta^2\sigma^2$. Proposition~\ref{prop:subg} implies that the Wigner threshold is $\lambda^*_\cX \ge 1/\sigma$ and so condition (i) of Theorem~\ref{thm:wish-nc} is satisfied. The result now follows from Theorem~\ref{thm:wish-nc}.

We remark that (\ref{eq:nc-result-app}) would follow from the weaker condition $f_\cX(t) \ge \frac{t^2}{2\sigma^2(1+t^2)}$ (instead of $f_\cX(t) \ge \frac{t^2}{2\sigma^2}$). This is exactly the condition for the Wigner lower bound of \citet{noise-cond} with $\lambda^* = 1/\sigma$.
\end{proof}

\begin{proposition}
\label{prop:pos-wish-wig}
Fix $\lambda^* > 0$. Suppose that for each $\beta > 0$, the assumptions of Theorem~\ref{thm:wish-nc} are satisfied for $\cX$ and $\gamma^* = \beta^2/(\lambda^*)^2$. Then $\GWig(\lambda,\cX) \contig \GWig(0)$ for any $\lambda < \lambda^*$.
\end{proposition}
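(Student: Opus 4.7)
The plan is to extract from the Wishart lower-bound hypothesis precisely the rate function condition underlying the Wigner lower bound of \citet{noise-cond}, running the implication in the direction opposite to Corollary~\ref{cor:wig-pos-wish}.

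First, dispose of the case in which condition (i) of Theorem~\ref{thm:wish-nc} is invoked in the hypothesis. With $\gamma^* = \beta^2/(\lambda^*)^2$, the inequality $\beta^2/\gamma^* \le (\lambda^*_\cX)^2$ reduces to $\lambda^* \le \lambda^*_\cX$, independently of $\beta$; so if condition (i) is used for any single $\beta > 0$, then $\lambda^* \le \lambda^*_\cX$ and the conclusion follows immediately from Lemma~\ref{lem:sec} and the definition of $\lambda^*_\cX$.

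It remains to handle the case in which condition (ii) of Theorem~\ref{thm:wish-nc} is in force: $f_\cX$ admits a local Chernoff bound, and (\ref{eq:nc-result-app}) holds for every $\beta > 0$ with $\gamma^* = \beta^2/(\lambda^*)^2$. Rewriting (\ref{eq:nc-result-app}) as $f_\cX(t) \ge (\lambda^*)^2 F(\beta,t)/\beta^2$ and taking the supremum over $\beta > 0$, Lemma~\ref{lem:Fb2-dec} (monotonicity of $F(\beta,t)/\beta^2$ in $\beta$) tells us the best bound is obtained in the $\beta \to 0^+$ limit. Applying the limit computation (\ref{eq:limit-wig-nc}) then yields
$$f_\cX(t) \ge \frac{(\lambda^*)^2\, t^2}{2(1+t^2)} \quad \forall t \in (0,1).$$
This, together with the local Chernoff bound, is exactly the rate function condition under which \citet{noise-cond} establish $\GWig(\lambda,\cX) \contig \GWig(0)$ for all $\lambda < \lambda^*$ (cf.\ the remark in the paper following Corollary~\ref{cor:wig-pos-wish}). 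Invoking their result completes the proof.

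The chief obstacle is not in the extraction step, which is an immediate consequence of Lemma~\ref{lem:Fb2-dec} and (\ref{eq:limit-wig-nc}), but rather in the fact that for $\lambda \in (\lambda^*_\cX, \lambda^*)$ the unconditional Wigner second moment may be unbounded, so Lemma~\ref{lem:sec} alone does not suffice. One must then appeal to the sharper joint signal-noise conditioning method of \citet{noise-cond}, which we invoke as a black box; a self-contained proof would require reproducing that noise-conditioning argument.
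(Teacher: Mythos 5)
Your proof is correct and follows essentially the same route as the paper's: dispose of condition (i) by observing it directly gives $\lambda^* \le \lambda^*_\cX$, then under condition (ii) extract the rate-function inequality $f_\cX(t) \ge (\lambda^*)^2 t^2/(2(1+t^2))$ via the $\beta \to 0^+$ limit (\ref{eq:limit-wig-nc}) and hand off to the noise-conditioning Wigner lower bound of \citet{noise-cond}. The only detail you omit is the paper's parenthetical caveat that \citet{noise-cond} assumes exactly unit-norm spikes, requiring an argument along the lines of Appendix~\ref{app:unit-wlog} to accommodate priors with $\|x\| \to 1$ only in probability; your invocation of Lemma~\ref{lem:Fb2-dec} to justify the supremum is sound but not strictly necessary, as simply passing to the limit in the valid-for-all-$\beta$ inequality suffices.
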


\begin{proof}
If condition (i) of Theorem~\ref{thm:wish-nc} is satisfied for some $\beta$ then we are done immediately, so assume condition (ii) holds for all $\beta > 0$. For all $\beta > 0$ and all $t \in (0,1)$ we have $\gamma^* f_\cX(t) \ge F(\beta,t)$, i.e.\ $f_\cX(t) \ge (\lambda^*)^2 F(\beta,t)/\beta^2$. Using (\ref{eq:limit-wig-nc}) this implies $f_\cX(t) \ge \frac{(\lambda^*)^2}{2} \frac{t^2}{1+t^2}$ and so the result follows from \citet{noise-cond}. (Although \citet{noise-cond} assume that the spike has exactly unit norm, arguments similar to Appendix~\ref{app:unit-wlog} can be used.)
\end{proof}


\begin{repproposition}{prop:wish-large-beta}
Suppose $\cX = \IID(\pi/\sqrt{n})$ where $\pi$ is a mean-zero unit-variance distribution for which $\pi \pi'$ (product of two independent copies of $\pi$) has a moment-generating function $M(\theta) \defeq \EE\exp(\theta \pi \pi')$ which is finite on an open interval containing zero. Then there exists $\bar \beta$ such that for any $\beta \ge \bar\beta$ and any $\gamma > \beta^2$ we have $\Wish(\gamma,\beta,\cX) \contig \Wish(\gamma)$.
\end{repproposition}

\noindent In other words, for sufficiently large $\beta$, detection is impossible below the spectral threshold.

\begin{proof}

We will show that for sufficiently large $\beta$, the assumptions of Theorem~\ref{thm:wish-nc} hold with $\gamma^* = \beta^2$ so that the result follows. The usual Chernoff bound yields
$$ \prob{\langle x,x' \rangle\ge t} \le \exp[-n(t\theta - \log M(\theta))] \quad \forall \theta \in \RR$$
and so, letting $\theta = t$,
$$\prob{\langle x,x' \rangle\ge t} \le \exp[-n(t^2 - \log M(t))].$$
Similarly,
$$ \prob{\langle x,x' \rangle\le -t} \le \exp[-n(t^2 - \log M(-t))].$$
Therefore, $f_\cX(t) \defeq \min \{f_1(t),f_2(t)\}$ is a valid rate function for $\cX$ with a local Chernoff bound, where $f_1(t) \defeq t^2 - \log M(t)$ and $f_2(t) \defeq t^2 - \log M(-t)$. It remains to show that for sufficiently large $\beta$, (\ref{eq:nc-result-app}) holds with $\gamma^* = \beta^2$, i.e. $f_\cX(t) \ge F(\beta,t)/\beta^2\;\forall t \in (0,1)$. We will show
\begin{equation}
\label{eq:f1}
f_1(t) \ge F(\beta,t)/\beta^2 \quad\forall t \in (0,1)
\end{equation}
but the proof for $f_2$ is similar.

First we show that (\ref{eq:f1}) holds for $t \in (0,\eps]$ for some $\eps > 0$. Using the well-known identity $\frac{\dee^k}{\dee \theta^k} M(\theta)|_{\theta = 0} = \EE[(\pi\pi')^k]$ we have derivatives $M(0) = 0$, $M'(0) = 0$, $M''(0) = 1$, $|M'''(0)| < \infty$, $|M''''(0)| < \infty$.
We can use these to compute $f_1(0) = 0$, $\lim_{t \to 0^+} f_1'(t) = 0$, $\lim_{t \to 0^+} f_1''(t) = 1$, $\lim_{t \to 0^+} f_1'''(t) = 0$, $|\lim_{t \to 0^+} f_1''''(t)| < \infty$. We can also compute $F(\beta,0)/\beta^2 = 0$, $\lim_{t \to 0^+} \frac{\partial}{\partial t} F(\beta,t)/\beta^2 = 0$, $\lim_{t \to 0^+} \frac{\partial^2}{\partial t^2} F(\beta,t)/\beta^2 = 1$, $\lim_{t \to 0^+} \frac{\partial^3}{\partial t^3} F(\beta,t)/\beta^2 = 0$, $\lim_{t \to 0^+} \frac{\partial^4}{\partial t^4} F(\beta,t)/\beta^2 = -6(b^2+4b+2)$. Note that $f_1(t)$ and $F(\beta,t)/\beta^2$ have matching derivatives (at $t = 0$) up to third order and that the fourth derivative of $F(\beta,t)/\beta^2$ goes to $-\infty$ as $\beta \to \infty$. Therefore we can find $\beta$ and $\eps > 0$ such that $f_1(t) \ge F(\beta,t)/\beta^2$ for all $t \in (0,\eps]$. Since $F(\beta,t)/\beta^2$ is a decreasing function of $\beta$ (Lemma~\ref{lem:Fb2-dec}), this remains true for any larger $\beta$.

Now we show that (\ref{eq:f1}) holds for $t \in (\eps,1)$. For any $t \in (\eps,1)$ we have $f_1(t) \ge f_1(\eps) > 0$ (using the derivatives above and the fact that rate functions are increasing). Also, for any $t \in [0,1]$ we have $\lim_{\beta \to \infty} F(\beta,t)/\beta^2 = 0$ and by compactness this convergence is uniform over $t$. Therefore if $\beta$ is sufficiently large, (\ref{eq:f1}) holds for $t \in (\eps,1)$, completing the proof.

\end{proof}

\section{Wishart results for specific priors}
\label{app:wish-priors}

\subsection{Spherical prior}

Our lower bound for the spherical prior is obtained by combining Theorem~\ref{thm:wish-nc} with the rate function of Proposition~\ref{prop:rate-functions}, along with the fact that $\lambda_\cX^* = 1$ for the spherical prior (Corollary~\ref{cor:sphere-prior}). The result is that the PCA threshold is optimal (i.e.\ we have contiguity for all $\gamma > \beta^2$) for all $\beta \in (-1,\infty)$. To show this, we need to check (\ref{eq:nc-result-app}) with $\gamma^* = \beta^2$. This follows from $\lim_{\beta \to -1^+} F(\beta,t)/\beta^2 = -\frac{1}{2} \log(1-t^2)$ (which is precisely the spherical rate function of Proposition~\ref{prop:rate-functions}) along with the fact that $F(\beta,t)/\beta^2$ is a decreasing function of $\beta$ (Lemma~\ref{lem:Fb2-dec}).


\subsection{Rademacher prior}

Our lower bound for the Rademacher prior is obtained by combining Theorem~\ref{thm:wish-nc} with the rate function of Proposition~\ref{prop:rate-functions}, along with the fact that $\lambda_\cX^* = 1$ for the Rademacher prior (Corollary~\ref{cor:pmone-wigner}). We also obtain an upper bound from Theorem~\ref{thm:wishart-mle}, taking $c = 2$.

\subsection{Sparse Rademacher prior}
\label{app:wish-sparse-rad}

First consider the variant of the sparse Rademacher prior where the spike has exactly $\rho n$ nonzero entries, which are \iid $\pm 1/\sqrt{\rho n}$ (and we restrict to $n$ for which $\rho n$ is an integer). In this case the rate function $f_\rho$ stated in Proposition~\ref{prop:rate-functions} has been proven to be valid, and furthermore to admit a local Chernoff bound \citep{noise-cond}. This yields a lower bound via Theorem~\ref{thm:wish-nc}. We show that the same lower bound holds for the \iid sparse Rademacher prior:

\begin{proposition}
Let $\cX_\rho$ be the \iid sparse Rademacher prior with sparsity $\rho$ (as defined in Section~\ref{sec:sparse-rad}). Let $f_\rho$ be the rate function defined in Proposition~\ref{prop:rate-functions}. Let $F(\beta,t)$ be defined as in (\ref{eq:nc-result-app}). If
\begin{equation}
\label{eq:sparse-rad-F}
\gamma^* f_\rho(t) \ge F(\beta,t) \quad \forall t \in (0,1)
\end{equation}
then $\Wish(\gamma,\beta,\cX_\rho) \contig \Wish(\gamma)$ for all $\gamma > \gamma^*$
\end{proposition}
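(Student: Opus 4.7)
The strategy is to reduce the iid variant $\cX_\rho$ to the exact-sparse variant of Proposition~\ref{prop:rate-functions} (for which $f_\rho$ is a valid rate function with a local Chernoff bound) via the conditional second moment method. The only substantive difference between the two variants is that under $\cX_\rho$ the support size $|S_x|$ is $\operatorname{Binomial}(n,\rho)$ rather than fixed at $\rho n$, and this discrepancy becomes negligible after conditioning on $|S_x|$ being close to its mean.

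Concretely, I would first condition on the event $\omega_n = \{\,\big||S_x|-\rho n\big| \le \sqrt{n}\log n\,\}$, which has probability $1-o(1)$ by a Bernstein bound for the Binomial distribution. Under $\tilde\cX_\rho \defeq \cX_\rho\,|\,\omega_n$ we have $\|x\|^2 = |S_x|/(\rho n) = 1+O(\log n/\sqrt n) \to 1$ in probability, so by Lemma~\ref{lem:cond} it is enough to bound the conditional second moment of $\tilde Q_n \defeq \Wish(\gamma,\beta,\tilde\cX_\rho)$ against $P_n \defeq \Wish(\gamma)$. Write $\tilde\cX_\rho = \sum_k p_k\,\cX_{\rho,k}$ as a convex combination of exact-$k$-sparse priors with magnitudes $\pm 1/\sqrt{\rho n}$, where $k$ ranges over the window $W_n \defeq [\rho n-\sqrt{n}\log n,\;\rho n+\sqrt{n}\log n]$. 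Since $d\tilde Q_n/dP_n = \sum_k p_k\,dQ_n^k/dP_n$ with $Q_n^k \defeq \Wish(\gamma,\beta,\cX_{\rho,k})$, Jensen's inequality (convexity of $x\mapsto x^2$) yields
\begin{equation*}
\Ex_{P_n}\!\left(\dd[\tilde Q_n]{P_n}\right)^{\!2} \;\le\; \sum_{k\in W_n} p_k\, \Ex_{P_n}\!\left(\dd[Q_n^k]{P_n}\right)^{\!2},
\end{equation*}
so it suffices to bound each summand uniformly in $k\in W_n$.

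Each $\cX_{\rho,k}$ is essentially an exact $(k/n)$-sparse Rademacher prior (rescaled to magnitudes $\pm 1/\sqrt{\rho n}$), and the analysis of \citet{noise-cond}, which is continuous in the sparsity parameter, yields a rate function $f_{k/n}$ with a local Chernoff bound that converges uniformly on $[0,1]$ to $f_\rho$ as $k/n\to\rho$. Combined with the continuity of $F(\beta,t)$ (Appendix~\ref{app:prop-F}) and Lemmas~\ref{lem:change-f}--\ref{lem:change-beta}, the hypothesis $\gamma^*f_\rho(t)\ge F(\beta,t)$ together with the strict slack $\gamma>\gamma^*$ implies $\gamma^{**} f_{k/n}(t) \ge F(\beta,t)$ for some fixed $\gamma^{**}\in(\gamma^*,\gamma)$, uniformly in $k\in W_n$ for all large $n$. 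Theorem~\ref{thm:wish-nc} then applies to each $\cX_{\rho,k}$ with this $\gamma^{**}$ and gives a bound on $\Ex_{P_n}(dQ_n^k/dP_n)^2$ that is uniform in $k$, so the weighted average, and hence the conditional second moment of $\tilde Q_n$, is bounded. The main obstacle is tracking this uniformity carefully: the constants appearing in the proof of Theorem~\ref{thm:wish-nc} (Section~\ref{sec:wish-nc})---the choice of $\eps$, the conditioning threshold $\eta$, and the local Chernoff constants inherited from \citet{noise-cond}---must be shown to depend continuously on the sparsity and not to degenerate as $k/n$ varies over a shrinking $O(\log n/\sqrt n)$-neighborhood of $\rho$. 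This is a routine perturbation check, but it is the only non-trivial piece of the argument, since everything else follows from the abstract framework already developed.
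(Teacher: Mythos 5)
You take a genuinely different route from the paper: you decompose the conditioned prior as a mixture $\tilde\cX_\rho = \sum_k p_k\,\cX_{\rho,k}$ over exact-sparsity slices and push Jensen's inequality through the likelihood ratio, while the paper instead conditions $K/n$ into a small compact interval $[\rho_1,\rho_2]$, takes $f(t)=\min_{\hat\rho\in[\rho_1,\rho_2]} f_{\hat\rho}(t)$ as a single rate function valid for the whole mixture, applies Theorem~\ref{thm:wish-nc} once, and finally transfers to $\cX_\rho$ via the comparison-of-priors tool (Proposition~\ref{prop:wish-compare}). The paper's route is cleaner precisely because it avoids the per-$k$ uniformity tracking you flag as the main obstacle.

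More importantly, your Jensen step has a real gap. You write $\Ex_{P_n}(d\tilde Q_n/dP_n)^2 \le \sum_k p_k\,\Ex_{P_n}(dQ_n^k/dP_n)^2$, and then claim Theorem~\ref{thm:wish-nc} ``gives a bound on $\Ex_{P_n}(dQ_n^k/dP_n)^2$.'' It does not. Theorem~\ref{thm:wish-nc} establishes contiguity by bounding a \emph{conditional} second moment, where the conditioning event $\Omega(x,Y)$ depends jointly on the hidden spike and the noise (see Section~\ref{sec:wish-nc}); the whole motivation for that conditioning is that the unconditional Wishart second moment $\Ex_{x,x'}(1-\beta^2\langle x,x'\rangle^2)^{-N/2}$ is generically \emph{unbounded} in the regime the theorem covers (the ``large deviations'' term blows up). So the per-slice quantities $\Ex_{P_n}(dQ_n^k/dP_n)^2$ that your Jensen bound reduces to are typically infinite, and the inequality is vacuous. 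To salvage the decomposition approach you would need to interleave the mixture with the $\Omega(x,Y)$ conditioning — for instance by applying Jensen to $\sum_k p_k\, d\tilde Q_n^k/dP_n$ with $\tilde Q_n^k \defeq Q_n^k\mid\Omega(x,Y)$ and then arguing that $\tilde Q_n \contig \sum_k p_k \tilde Q_n^k$ — but your proposal does not address this, and it is exactly the subtlety the paper's interval-rate-function argument sidesteps.

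A smaller point: your slices $\cX_{\rho,k}$ keep the magnitude $\pm 1/\sqrt{\rho n}$, so $\|x\|^2 = k/(\rho n)\ne 1$. That is allowed (Theorem~\ref{thm:wish-nc} only needs $\|x\|\to 1$, handled in Appendix~\ref{app:unit-wlog}), but the exact-sparse rate function $f_\rho$ from Proposition~\ref{prop:rate-functions} is stated for the $\pm 1/\sqrt{K}$ normalization with exactly $\rho n$ nonzeros; the paper uses Proposition~\ref{prop:wish-compare} to bridge this normalization discrepancy, and your sketch would need an analogous step.
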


\noindent In proving this we will not quite show that $f_\rho$ is a rate function for $\cX_\rho$ (but we will show that something arbitrarily-close is).

\begin{proof}

First we prove the result for the variant $\bar\cX_\rho$ of the sparse Rademacher prior where the number $K$ of nonzeros satisfies $K/n \to \rho$ in probability, and the nonzero entries are \iid $\pm 1/\sqrt{K}$. Suppose we have some $\gamma^*,\beta,\rho$ for which (\ref{eq:sparse-rad-F}) holds. At the expense of increasing $\gamma^*$ by an arbitrarily-small constant, we can find a small compact interval $[\rho_1,\rho_2]$ with $\rho$ in its interior such that (\ref{eq:sparse-rad-F}) holds on the entire interval. Condition on the $(1-o(1))$-probability event that $K/n$ lies in this interval. The function $f(t) = \min_{\hat\rho \in [\rho_1,\rho_2]} f_{\hat\rho}(t)$ is a valid rate function for $\bar\cX_\rho$ and furthermore has a local Chernoff bound. This follows from the sparse Rademacher tail bounds of \citet{noise-cond} (Propositions~4.8 and 4.9 of \citet{noise-cond}). The proof for $\bar\cX$ now follows from Theorem~\ref{thm:wish-nc} because $f$ satisfies (\ref{eq:nc-result-app}). The same lower bound holds for $\cX_\rho$ by Proposition~\ref{prop:wish-compare} (comparison of priors).

\end{proof}

For the upper bound, we will apply Theorem~\ref{thm:wishart-mle}. However, instead of $\cX_\rho$ we consider the conditional distribution $\tilde\cX_\rho$ of $\cX_\rho$ given the $(1-o(1))$-probability event that the number $K$ of nonzero entries of $x$ satisfies $\rho n - \sqrt{n} \log n < K < \rho n + \sqrt{n} \log n$. The support size of $\tilde \cX_\rho$ is at most
$$2 \sqrt{n} \log n \cdot 2^{(\rho + o(1)) n} \binom{n}{(\rho \pm o(1)) n}.$$
By Stirling's approximation, $\log \binom{n}{\rho n} = n H(\rho) + o(n)$ where $H(\rho) = -\rho \log \rho - (1-\rho) \log (1-\rho)$ is the binary entropy. We can therefore apply Theorem~\ref{thm:wishart-mle} with any $c > 2^\rho \exp(H(\rho))$.

\bibliography{main}

\end{document}